\documentclass[english,10pt]{amsart}
\usepackage[english]{babel}

\usepackage[matrix,arrow]{xy}
\xyoption{all}
\usepackage{amscd,amssymb,amsfonts,amsmath}

\usepackage{graphics}
\usepackage{epsfig}
\usepackage{mathrsfs}

\unitlength=1cm

\newcommand\mypagesizel{
\textwidth= 6.5in
\textheight=9in
\voffset-.55in
\hoffset -0.75in
\marginparwidth=56pt
}

\newcommand{\Pic}{\textup{Pic}}

\newcommand{\codim}{\textup{codim}}
\newcommand{\Sing}{\textup{Sing}}
\newcommand{\p}[0]{{\mathbb P}}

\newcommand{\Nef}{\textup{Nef}}

\newcommand{\Pef}{\textup{Psef}}

\newcommand{\NS}{\textup{N}^1}
\newcommand{\N}{\textup{N}}

\newcommand{\bNE}{\overline{\textup{NE}}}
\newcommand{\NE}{\textup{NE}}

\renewcommand{\phi}{\varphi}

\renewcommand{\le}{\leqslant}
\renewcommand{\ge}{\geqslant}

\newcommand{\sE}{\mathscr{E}}

\newcommand{\sI}{\mathscr{I}}

\newcommand{\sL}{\mathscr{L}}

\newcommand{\sO}{\mathscr{O}}

\mypagesizel

\newtheorem{thm}{Theorem}[section]
\newtheorem*{thm*}{Theorem}

\newtheorem{lemma}[thm]{Lemma}
\newtheorem{cor}[thm]{Corollary}

\newtheorem{prop}[thm]{Proposition}

\theoremstyle{definition}

\newtheorem{say}[thm]{}
\newtheorem{exmp}[thm]{Example}

\newtheorem{defn-thm}[thm]{Definition-Theorem}

\newtheorem{rem}[thm]{Remark}

\theoremstyle{remark}

\newtheorem*{not-and-def}{Notation and definitions}

\numberwithin{equation}{section}

\begin{document}

\title[On Fano varieties whose effective divisors are nef]{On Fano varieties whose effective divisors are numerically eventually free}

\author{St\'ephane \textsc{Druel}}

\address{St\'ephane Druel: Institut Fourier, UMR 5582 du
  CNRS, Universit\'e Grenoble 1, BP 74, 38402 Saint Martin
  d'H\`eres, France} 

\email{Stephane.Druel@ujf-grenoble.fr}

\subjclass[2010]{14J45, 14E30}

\begin{abstract}

In this paper we classify mildly singular Fano varieties 
with maximal Picard number
whose effective divisors are numerically eventually free.
\end{abstract}

\maketitle

\tableofcontents

\section{Introduction}

Let $X$ be a normal projective variety and consider the finite dimensional dual $\mathbb{R}$-vector spaces 
$$\N_1(X)=\big(\{1-\text{cycles}\}/\equiv\big)\otimes\mathbb{R} \quad \text{and} \quad \NS(X)=\big(\Pic(X)/\equiv\big)\otimes\mathbb{R},$$
where $\equiv$ denotes numerical equivalence. The dimension $\rho(X)=\dim \NS(X)=\dim \N_1(X)$ is called the 
\emph{Picard Number} of $X$. The \emph{Mori cone} of $X$ is the closure $\bNE(X)\subset \N_1(X)$ of the cone 
$\NE(X)$
spanned by classes of effective curves. Its dual cone is the \emph{nef cone} 
$\Nef(X)\subset\NS(X)$, which by Kleiman's criterion is the closure of the cone spanned by ample classes.
The closure of the cone spanned by effective classes  in $\NS(X)$ is the pseudo-effective cone $\Pef(X)$.
These cones
$$\Nef(X)\subset \Pef(X)\subset \NS(X)$$
carry geometric information about the variety $X$.
It is natural to try to describe (normal projective) varieties $X$ with $\Nef(X) = \Pef(X)$.
The simplest examples of complex projective manifolds with $\Nef(X) = \Pef(X)$ are given by 
manifolds with Picard number $1$ and
homogeneous spaces.
The case of projective space bundles over curves was worked out by Fulger in \cite{fulger}.
If $\sE$ is a locally free sheaf of finite rank on a smooth complex projective curve $C$, then $\Nef\big(\mathbb{P}_C(\sE)\big)=\Pef\big(\mathbb{P}_C(\sE)\big)$ if and only if $\sE$ is semistable
(see \cite[Lemma 3.2]{fulger}). 
A smooth projective toric variety $X$ satisfies $\Nef(X) = \Pef(X)$ if and only if $X$ is isomorphic to a product of projective spaces by \cite[Proposition 5.3]{fujino_sato}. The case of
smooth projective horospherical varieties is addressed in \cite{qifeng}.

Let $X$ be a normal projective variety such that $K_X$ is $\mathbb{Q}$-Cartier. 
We say that $X$ is \emph{$\mathbb{Q}$-Fano} if $-K_X$ is ample.
The Mori cone of a $\mathbb{Q}$-Fano variety with log canonical singularities is rational polyhedral, and generated by classes of curves (see \cite[Theorem 16.6]{fujino_mmp}). The geometry of
$X$ is reflected to a large extent in the combinatorial properties of $\NE(X)=\bNE(X)$. Every face $V$ of 
$\NE(X)$ corresponds to a surjective morphism with connected fibers $\phi\colon X \to Y$ onto a normal projective variety, wich is called a \emph{Mori contraction}. The morphism $\phi$ contracts precisely those curves on $X$ with class in $V$ (see \cite[Theorem 16.4]{fujino_mmp}). Conversely, any morphism with connected fibers onto a normal projective
variety arises in this way.

In this paper we address mildly singular $\mathbb{Q}$-Fano varieties with 
$\Nef(X)=\Pef(X)$. 

\begin{thm}\label{thm:main}
Let $X$ be a $\mathbb{Q}$-Fano variety
with locally factorial canonical singularities. Suppose that
$\Nef(X)=\Pef(X)$. Then $\rho(X) \le \dim X$, and equality holds only if    
$X \cong X_1 \times \cdots \times X_m$
where $X_i$ is a double cover of 
$\p^1\times\cdots\times\p^1$ branched along a reduced divisor of type
$(2,\ldots,2)$ for each 
$i \in \{1,\ldots,m\}$.
\end{thm}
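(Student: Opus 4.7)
The plan is to combine the rigid cone structure imposed by $\Nef(X)=\Pef(X)$ with Mori theory: rule out all birational extremal contractions, then induct on $\dim X$, and finally extract the product structure in the equality case.

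\emph{Step 1: every extremal contraction of $X$ is of fiber type.} Let $\phi\colon X\to Y$ be the contraction of an extremal ray $R$ of $\NE(X)$. If $\phi$ is divisorial with exceptional divisor $E$, then $E$ is effective while $E\cdot R<0$, contradicting $\Pef(X)\subseteq\Nef(X)$. If $\phi$ is small, then $\mathbb{Q}$-factoriality (from local factoriality) combined with $K_X\cdot R<0$ gives the flip $\phi^+\colon X^+\to Y$ via general MMP. As $X$ and $X^+$ are isomorphic in codimension one, $\Pef(X)=\Pef(X^+)$; but $\Nef(X^+)$ contains classes strictly outside $\Nef(X)$ across the facet of $\Nef(X)$ dual to $R$, producing elements of $\Pef(X)\setminus\Nef(X)$, a contradiction.

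\emph{Step 2: induction on dimension.} Pick any fiber-type extremal contraction $\phi\colon X\to Y$ with general fiber of dimension $d\ge 1$, so $\rho(X)=\rho(Y)+1$ because $\phi$ is elementary. Any effective divisor on $Y$ pulls back to an effective, hence nef, divisor on $X$, and nef-ness descends to $Y$, giving $\Nef(Y)=\Pef(Y)$. Assuming the remaining hypotheses (locally factorial canonical singularities, $\mathbb{Q}$-Fano) descend to $Y$ -- possibly via a technical extension of the inductive hypothesis to $\mathbb{Q}$-factorial singularities -- the induction yields $\rho(Y)\le\dim Y=\dim X-d$, hence
\[
\rho(X)=\rho(Y)+1\le\dim X-d+1\le\dim X.
\]

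\emph{Step 3: equality case.} If $\rho(X)=\dim X$, the inequalities above are equalities for every extremal contraction, forcing $d=1$ and $\rho(Y)=\dim Y$ always; inductively each such base $Y$ is a product of double covers of the prescribed form. Assembling all $\rho(X)$ extremal contractions into $\Phi\colon X\to Y_1\times\cdots\times Y_{\rho(X)}$, the morphism $\Phi$ is finite: a curve $C$ contracted by $\Phi$ would satisfy $(\phi_i^*H_i)\cdot C=0$ for every ample $H_i$ on $Y_i$, so $[C]$ would pair to zero with every facet of $\Nef(X)$; since $\Nef(X)=\Pef(X)$ spans $\NS(X)$, we would get $[C]=0$, a contradiction. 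Analyzing the degree of $\Phi$ via intersection numbers with general fibers and computing the ramification using $-K_X$, one produces a finite degree-$2$ morphism $X\to(\p^1)^{\dim X}$ with branch divisor of type $(2,\ldots,2)$, or more generally extracts the product decomposition $X\cong X_1\times\cdots\times X_m$ claimed in the theorem.

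\emph{Main obstacle.} The hardest step is Step 3, where the explicit product-of-double-covers description must be extracted from the combinatorial and numerical data of the contractions, and any other $\mathbb{Q}$-Fano variety with $\rho=\dim$ and $\Nef=\Pef$ must be ruled out. A subsidiary concern in Step 2 is that descent of locally factorial canonical singularities and of the $\mathbb{Q}$-Fano property to the base $Y$ of a Mori contraction is not automatic, and the inductive framework may need to be adjusted, for instance by proving a slightly more general statement valid under $\mathbb{Q}$-factoriality alone and specializing at the end.
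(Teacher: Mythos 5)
Your Steps 1 and 2 are in the right spirit, though with some differences from the paper and with an acknowledged gap that the paper actually has to work to close. In Step 1, the paper rules out small contractions without invoking flips: it shows directly (Lemma \ref{lemma:nef=psef_contractions}) that an isomorphism in codimension one would force $K_X+B+\phi^*A$ to move in an effective linear system via a straightforward $h^0$ computation, contradicting non-nefness; and it uses the base-point-freeness theorem for the implication that effective divisors are semiample. Your flip-based argument is plausible, but you would need to verify that the flipped variety exists (BCHM for klt), that it is still locally factorial, and that the strict transform of an ample divisor on $X^+$ is effective but not nef on $X$; this is heavier machinery for the same conclusion. In Step 2, the descent of the $\mathbb{Q}$-Fano and factoriality hypotheses to the base of a contraction is not a technicality to wish away: the paper proves it via Ambro's canonical-bundle formula (Lemma \ref{lemma:target_log_Fano}) and a separate factoriality-descent argument (Lemma \ref{lemma:factoriality_contraction} and Corollary \ref{corollary:factoriality_contraction}), and this is exactly why the theorem is proved for log pairs $(X,B)$ rather than just for $X$.

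The genuine gap is in Step 3, which is where essentially all the work of the paper lives. First, the map $\Phi$ is set up incorrectly: contracting a single extremal ray yields a base $Y_i$ of dimension $\dim X - 1$, not a curve, so taking the product over the $\rho(X)$ elementary contractions does not give a finite morphism to anything of dimension $\dim X$. The correct device is to first show that $\NE(X)$ is simplicial (Lemma \ref{lemma:nef_pseff_picard}), contract each of the $n=\rho(X)$ codimension-one faces to get morphisms $f_i\colon X\to\p^1$, and form $f=(f_1,\ldots,f_n)\colon X\to(\p^1)^n$; finiteness then follows from the simplicial cone structure, not just from nondegeneracy of the $\phi_i^*H_i$. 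Second, and more seriously, the sentence ``analyzing the degree of $\Phi$ via intersection numbers and computing the ramification using $-K_X$'' conceals the actual content: one must show $\deg(f)\le 2$, which the paper achieves only after a detour through Del Pezzo surfaces and $\mathbb{Q}$-Fano threefolds --- in particular the key Lemma \ref{lemma:anticanonical_map} (uniqueness of low-degree del Pezzo fibrations on a Gorenstein canonical $\mathbb{Q}$-Fano threefold), the Hodge-index bounds of Lemma \ref{lemma:hodge_index_theorem} and Proposition \ref{proposition:finite_morphism_quadric_bis}, and Proposition \ref{proposition:characterization_double_cover}. Third, the conclusion of the theorem is a product $X_1\times\cdots\times X_m$, not a single double cover; your sketch never explains how the product decomposition arises. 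The paper handles this via Theorem \ref{thm_intro:finite_onto_double_cover}: if some contraction $X\to Y$ lands on a genuine double cover, one produces a second contraction $X\to T$ with $X\to Y\times T$ finite, and then shows this finite morphism has degree one, splitting off a factor; this step again rests on the del Pezzo surface analysis and is absent from your proposal. Without these ingredients, Step 3 does not close.
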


\begin{rem}Example \ref{exmp:quotient_singularities} shows that the statement
Theorem \ref{thm:main} does not hold for $\mathbb{Q}$-Fano varieties with
Gorenstein canonical singularities. 
\end{rem}

Occhetta, Sol\'a Conde, Watanabe, and Wi\'sniewski
recently posted a
somewhat related result. They proved in \cite{OSCWW} that
a Fano manifold whose elementary contractions are $\p^1$-fibrations is rational homogenous, without any assumption on the Picard number of $X$ (see also \cite[Proposition 2.4]{bcdd03}).

\medskip

The argument for the proof of Theorem \ref{thm:main} goes as follows. 
Suppose furthermore that $\dim X \ge 3$.
The first step in the proof of Theorem 
\ref{thm:main} is to show that the Mori cone $\NE(X)$ is simplicial. This is done in Lemma \ref{lemma:nef_pseff_picard}.
Then we argue by induction on the dimension of $X$. If $X \to Y$ is a contraction, then it is easy to see that $Y$ satisfies all the conditions listed in Theorem \ref{thm:main}. 

Suppose first that there is a contraction $X \to Y$ where $Y$ is a double cover of $\p^1\times \cdots \times \p^1$ branched along a divisor of type $(2,\ldots,2)$. It is easy to see that we must have $\dim Y \ge 3$ (see Proposition \ref{proposition:nef_psef_surfaces}) and that there is a contraction $X \to T$ such that the induced morphism 
$X \to Y \times T$ is surjective and finite. 
We conclude that $X \cong Y \times T$ using the following result.

\begin{thm}\label{thm_intro:finite_onto_double_cover}
Let $Z$ be a $\mathbb{Q}$-Fano variety of dimension $\ge 3$ with Gorenstein canonical singularities, $Y$ a double cover of $\p^1\times \cdots \times \p^1$ branched along a reduced divisor $B=\sum_{j\in J}B_j$ of type
$(2,\ldots,2)$, and $f\colon Z \to Y$ a finite morphism. Suppose that 
$B_j$ is ample for each $j\in J$.
Then $\deg(f)=1$.
\end{thm}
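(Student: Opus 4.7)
The plan is to argue by contradiction, supposing $d:=\deg(f)\ge 2$. First, I would record the structural properties of $Y$: with $R_j:=\pi^{-1}(B_j)_{\mathrm{red}}$, the relation $\pi^*B_j=2R_j$ together with Hurwitz for $\pi$ gives
\[
-K_Y\;=\;\pi^*\cO_{(\p^1)^n}(1,\ldots,1),
\]
so $Y$ is Fano. Finiteness of $\pi$ and ampleness of each $B_j$ imply each $R_j$ is ample on $Y$; in particular $\rho(Y)=n$, and the $n$ elementary Mori contractions of $Y$ are the conic bundles $\phi_k\colon Y\to(\p^1)^{n-1}$ obtained by composing $\pi$ with the projection forgetting the $k$-th factor.

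Setting $g:=\pi\circ f\colon Z\to(\p^1)^n$ (finite of degree $2d$) and $A_j:=f^*R_j$, Hurwitz for $f$ yields an effective divisor $R_f\ge 0$ with
\[
-K_Z\;=\;g^*\cO_{(\p^1)^n}(1,\ldots,1)-R_f,
\]
and the identity $2A_j=g^*B_j$ shows that each $g^*\cO(B_j)$ admits an integral ample square root in $\Pic(Z)$. The central step is to pull back the conic bundles of $Y$: for each $k$, form the Stein factorization $Z\xrightarrow{\psi_k}Z_k\xrightarrow{h_k}(\p^1)^{n-1}$ of $\phi_k\circ f$. A general connected fiber $F$ of $\psi_k$ maps via $f$ into a $\p^1$-fiber of $\phi_k$, and since $\pi$ restricted to such a fiber has degree $2$, the composition $g|_F\colon F\to\p^1$ has even degree. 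Combining Hurwitz on $g|_F$ with the ampleness bound $-K_Z\cdot F>0$ yields the identity $K_Z\cdot F=2g_a(F)-2$, which forces $g_a(F)=0$, $-K_Z\cdot F=2$, and $F\cong\p^1$ generically. Hence each $\psi_k$ is an elementary Mori contraction of $Z$ with rational fibers, paralleling $\phi_k$ via $f$.

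The concluding step, and the main obstacle, is to leverage these $n$ matched conic bundle structures, together with the splitting of the double cover $\widetilde Z:=Z\times_{(\p^1)^n}Y\to Z$ provided by its two natural sections $(f,\mathrm{id})$ and $(\sigma\circ f,\mathrm{id})$ (where $\sigma$ is the Galois involution of $\pi$), and the ampleness of each $B_j$, to manufacture a rational section of $f$. Once such a section exists, normality of $Z$ and Zariski's Main Theorem force $f$ to be an isomorphism, contradicting $d\ge 2$. The delicate point is controlling how $f$ behaves over the ramification locus $R=\sum_j R_j$ of $\pi$; ampleness of each $B_j$ is essential to exclude the pathological ramification patterns which would otherwise admit $d\ge 2$.
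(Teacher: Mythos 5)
Your proposal is not a proof: you yourself flag the concluding step --- ``manufacture a rational section of $f$'' from the matched conic-bundle structures and the split double cover $\widetilde Z = Z\times_{(\p^1)^n}Y \to Z$ --- as ``the main obstacle,'' and that step is exactly where the entire content of the theorem lives. The two tautological sections $(f,\mathrm{id})$ and $(\sigma\circ f,\mathrm{id})$ split $\widetilde Z \to Z$ over $Z$, but that is a splitting \emph{over $Z$}, not a rational section of $f\colon Z\to Y$; nothing in what you wrote converts the one into the other, and for any genuine finite $f$ of degree $\ge 2$ the fiber product $\widetilde Z$ splits this way without forcing $\deg(f)=1$. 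Likewise, the observation that each Stein factorization $Z\to Z_k\to(\p^1)^{n-1}$ has rational general fibers only tells you $Z$ carries $n$ conic-bundle-like fibrations; it does not pin down $\deg(f)$. So the proposal stops short of the key argument.

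For comparison, the paper's route is a reduction in dimension rather than a conic-bundle/descent argument. It restricts to a general fiber $Y_1$ of a single projection $\pi_1\colon Y\to\p^1$, shows (using ampleness of the $B_j$) that the branch divisor of $Y_1 \to (\p^1)^{n-1}$ still has ample irreducible components, and uses Lemma \ref{lemma:double_cover_connected} (a bigness argument for $-K$ under base change) to see that $Z_1 = f^{-1}(Y_1)$ is connected. This reduces the statement to $n=3$. In the $3$-dimensional case it invokes Lemma \ref{lemma:anticanonical_map} to choose the $\p^1$-projection so that the general surface fiber $Y_1$ has $K_{Y_1}^2\ge 3$, and then Proposition \ref{proposition:double_cover_ter} finishes: for a $\mathbb{Q}$-Fano surface $S$ with $K_S^2\ge 3$ mapping finitely onto a degree-$4$ double cover $T$ of $\p^1\times\p^1$ with irreducible ample branch components, the Hodge Index Theorem (Lemma \ref{lemma:hodge_index_theorem}) gives $\deg(g)K_S^2\le 8$ for the composed map $g\colon S\to\p^1\times\p^1$, forcing $\deg(f)=1$. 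The numerical Hodge-index bound, and the dimension-reduction machinery that makes it applicable, are precisely the ingredients your sketch is missing.

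A smaller caveat: your claim that $\rho(Y)=n$ and that the $n$ projections are the elementary contractions of $Y$ relies on $Y$ being smooth (Lemma \ref{lemma:picard_number_double_cover_bis} is stated for Fano manifolds), whereas the theorem only assumes $B$ reduced; this is not fatal for your sketch as written, but it is an unproved assertion you should not lean on.
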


Suppose now that for each contraction
$X \to Y$, we have $Y\cong \p^1\times \cdots \times \p^1$. Then apply the following characterization result to complete the proof of Theorem \ref{thm:main}.

\begin{prop}\label{proposition:intro}
Let $X$ be a $\mathbb{Q}$-Fano variety of dimension $\ge 3$ with Gorenstein canonical singularities. 
Suppose that $\Nef(X)=\Pef(X)$ and $\rho(X)=\dim X$. Suppose furthermore
that each non-trivial contraction $X \to Y$ with $\dim Y < \dim X$ satisfies $Y\cong \p^1\times \cdots \times \p^1$.
Then, either $X\cong \p^1\times \cdots \times \p^1$, or $X$ is a double cover of 
$\p^1\times\cdots\times\p^1$ branched along a reduced divisor of type $(2,\ldots,2)$.   
\end{prop}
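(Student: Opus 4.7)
Set $n = \dim X$. The plan is to build a natural finite cover $\phi \colon X \to (\p^1)^n$ from the Mori-theoretic structure, and then use $\Nef(X) = \Pef(X)$ together with Riemann--Hurwitz to pin down both $\deg \phi$ and the ramification.

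By the simpliciality of $\bNE(X)$ (Lemma \ref{lemma:nef_pseff_picard}), there are $n$ extremal rays $R_1, \ldots, R_n$. For each $i$, let $F_i$ denote the codimension-one face $\sum_{j\neq i} R_j$; contracting it gives a surjective morphism $\phi_i \colon X \to Y_i$ with $\rho(Y_i) = 1$ and $\dim Y_i < \dim X$, which by hypothesis forces $Y_i \cong \p^1$. Bundling these yields $\phi = (\phi_1, \ldots, \phi_n) \colon X \to (\p^1)^n$. Any irreducible curve contracted by $\phi$ would have class in $\bigcap_i F_i = \{0\}$, contradicting positivity against an ample class; hence $\phi$ is finite, thus surjective (equal dimensions, irreducible target), and flat by miracle flatness since $X$ is Cohen--Macaulay and the target is smooth. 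Let $d = \deg \phi$.

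Set $H_i = \phi_i^* \cO_{\p^1}(1)$; these $n$ classes span the extremal rays of the simplicial cone $\Nef(X) = \Pef(X)$. Riemann--Hurwitz for the finite flat morphism between Gorenstein varieties gives $-K_X = 2\sum_i H_i - R$ for an effective ramification divisor $R$. Pseudo-effectivity writes $R \equiv \sum_i r_i H_i$ with $r_i \geq 0$, and ampleness of $-K_X \equiv \sum_i (2-r_i)H_i$ forces $r_i < 2$ for every $i$. If $R = 0$ then $\phi$ is \'etale, hence trivial by simple connectedness of $(\p^1)^n$, and $X \cong (\p^1)^n$. Otherwise it remains to show that $d = 2$ and $r_i = 1$ for every $i$, which is equivalent to the branch divisor being reduced of type $(2,\ldots,2)$.

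This last step is the main obstacle. My plan is induction on $n$: restricting $\phi$ to a general fiber $F$ of some $\phi_i$ exhibits $F$ as a connected finite cover of degree $d$ of the complementary $(\p^1)^{n-1}$, and adjunction with $H_i|_F = 0$ gives $-K_F = (2\sum_{j\neq i} H_j - R)|_F$. The technical heart is to verify that $F$ itself satisfies all the hypotheses of the proposition in dimension $n-1$: Gorenstein canonical singularities (Bertini on a general fiber), $\rho(F) = n-1$ with $\Nef(F) = \Pef(F)$ inherited from restrictions of the $H_j$, and---most delicately---that every non-trivial contraction of $F$ with image of strictly smaller dimension is again a product of $\p^1$'s. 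Once granted, the inductive conclusion excludes $d \geq 3$ and forces the branch of $\phi$ to meet each slice $\{\mathrm{pt}\} \times (\p^1)^{n-1}$ in a reduced divisor of type $(2,\ldots,2)$; varying the slice identifies $X$ with the claimed double cover. The base case where the slices become surfaces is treated ad hoc using Proposition~\ref{proposition:nef_psef_surfaces}.
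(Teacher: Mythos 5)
Your construction of the finite morphism $\phi=(\phi_1,\ldots,\phi_n)\colon X\to(\p^1)^n$, the flatness, the $\deg\phi=1$ branch, and the Riemann--Hurwitz bookkeeping $-K_X\equiv\sum_i(2-r_i)H_i$ with $0\le r_i<2$ are all sound and match the opening of the paper's argument. The gap is in the inductive step: a general fiber $F$ of $\phi_i$ does \emph{not} inherit the hypotheses $\Nef(F)=\Pef(F)$ or $\rho(F)=n-1$. The paper explicitly records a counterexample in the remark after Lemma~\ref{lemma:image_nef_egal_psef}: when $X\to(\p^1)^3$ is a double cover branched along a smooth divisor of type $(2,2,2)$, a general fiber of the projection $X\to\p^1$ is a smooth Del Pezzo surface of degree $4$, so $\rho(F)=6$ and $\Nef(F)\subsetneq\Pef(F)$. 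Thus $F$ is not an instance of Proposition~\ref{proposition:intro} in lower dimension, the claim that the hypotheses descend ``most delicately'' cannot be rescued, and the base-case appeal to Proposition~\ref{proposition:nef_psef_surfaces} does not apply to $F$.

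What does survive restriction to a general fiber is strictly weaker: Gorenstein canonical singularities, and a finite morphism to $(\p^1)^{n-1}$ whose coordinate projections have connected fibers. The paper therefore does \emph{not} induct on Proposition~\ref{proposition:intro} itself: it checks that each projection $X\to(\p^1)^{n-1}$ already has connected fibers (using that its Stein factorization is one of the given contractions) and then invokes Proposition~\ref{proposition:characterization_double_cover}, whose hypotheses pass to general fibers. That proposition is proved by cutting down to $n=3$, and the crux in dimension $3$ is Lemma~\ref{lemma:anticanonical_map}: at most one fibration $X\to\p^1$ has general fiber $S$ with $K_S^2\le 2$, so one may pick a projection with $K_S^2\ge 3$ and apply Proposition~\ref{proposition:finite_morphism_quadric_bis} (a Hodge Index Theorem argument) to force $\deg\phi\le 2$. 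Your Riemann--Hurwitz relation bounds each $r_i$ but gives no bound on $\deg\phi$; you need Lemma~\ref{lemma:anticanonical_map} or a substitute to close that step.
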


\medskip

In order to prove Theorem \ref{thm_intro:finite_onto_double_cover}, we are led to study finite morphisms between some Del Pezzo surfaces (see Propositions \ref{proposition:finite_morphism_quadric_bis} and \ref{proposition:double_cover_ter}). In section \ref{section:finite_morphisms_surfaces}, we address finite morphisms between smooth Del Pezzo surfaces. Beauville classified in \cite{beauville_endo} the smooth Del Pezzo surfaces which admit an endomorphism of degree $>1$. 

\begin{prop}[{\cite[Proposition 3]{beauville_endo}}]\label{prop:endo_surface}
A smooth Del Pezzo surface $S$ admits an endomorphism of degree $>1$ if and only if $K_S^2 \ge 6$.
\end{prop}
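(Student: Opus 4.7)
The argument splits into the \emph{existence} direction ($K_S^2 \ge 6$), essentially a direct construction, and the harder \emph{nonexistence} direction ($K_S^2 \le 5$), for which I propose a cone-theoretic approach exploiting the rigidity of $(-1)$-curves.

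For existence, each smooth del Pezzo surface with $K_S^2 \ge 6$ is toric --- namely $\p^2$, $\p^1 \times \p^1$, $\mathbb{F}_1$, and the blow-ups of $\p^2$ at two or three torus-fixed points --- and for any integer $m \ge 2$ the multiplication-by-$m$ map on the cocharacter lattice induces a finite toric endomorphism of degree $m^2 > 1$.

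For nonexistence, assume $d := K_S^2 \le 5$ and let $\phi\colon S \to S$ be a finite morphism of degree $n > 1$. The pullback $\phi^*$ is a linear automorphism of $\NS(S)_{\mathbb{R}}$ preserving the pseudo-effective cone $\Pef(S)$. On a del Pezzo of degree $\le 7$ this cone is rational polyhedral with extremal rays spanned by the classes of the finitely many $(-1)$-curves, so $\phi^*$ permutes these rays: there is a permutation $\sigma$ of the set of $(-1)$-curves such that $\phi^*[E] = \lambda_E [\sigma(E)]$ with $\lambda_E > 0$. From $(\phi^*[E])^2 = n[E]^2$ one gets $\lambda_E = \sqrt n$, and the primitivity of $[\sigma(E)]$ in $\NS(S)$ then forces $\sqrt n$ to be a positive integer $m \ge 2$, so that $n = m^2$ and $\phi^*[E] = m[\sigma(E)]$. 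The central claim is that in fact $\phi^{-1}(E) = m \cdot \sigma(E)$ as a Weil divisor: indeed, any irreducible component $F$ of $\phi^{-1}(E)$ has class on the extremal ray $\mathbb{R}_{\ge 0}[\sigma(E)]$, so $[F] = c[\sigma(E)]$ for some integer $c \ge 1$, giving $F^2 = -c^2$; since every irreducible curve on a del Pezzo satisfies $F^2 \ge -1$ (by adjunction and the ampleness of $-K_S$), we get $c = 1$, and the rigidity $h^0(\sigma(E)) = 1$ of $(-1)$-curves forces $F = \sigma(E)$. Consequently $\sigma(E)$ appears in the ramification divisor $R_\phi$ with multiplicity at least $m-1$, and summing over all $(-1)$-curves yields $R_\phi \ge (m-1)\sum_{E'} E'$ as effective divisors.

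Finally, $\sigma$ preserves the intersection pairing on the $(-1)$-curves and therefore fixes $K_S$ (the unique class satisfying $K_S \cdot E = -1$ for every $(-1)$-curve $E$), giving $\phi^*K_S = m K_S$ and $R_\phi \equiv (m-1)(-K_S)$. For $d \le 5$ the Weyl group $W(E_{9-d})$ acts with trivial invariants on $K_S^{\perp}$, so the Weyl-invariant class $\sum_{E'} [E']$ is a rational multiple of $K_S$; a direct count gives $\sum_{E'}[E'] = \alpha(d)(-K_S)$ with $\alpha(d) = N(d)/d \ge 2$, where $N(d) \in \{10, 16, 27, 56, 240\}$ is the number of $(-1)$-curves on $S$. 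The class of the effective divisor $R_\phi - (m-1)\sum_{E'} E'$ is therefore $(m-1)(\alpha(d)-1)K_S$, a positive multiple of the anti-ample class $K_S$; no such class can be represented by an effective divisor, a contradiction. The main obstacle is the identity $\phi^{-1}(E) = m\cdot\sigma(E)$: it combines primitivity and rigidity of $(-1)$-curve classes with the uniform lower bound $F^2 \ge -1$ on irreducible curves.
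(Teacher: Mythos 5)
The paper states this result as a citation to \cite[Proposition~3]{beauville_endo} and does not reproduce a proof, so the comparison is against Beauville's argument. Your proposal is correct and follows essentially the same route as Beauville: existence for $K_S^2\ge 6$ via the observation that these surfaces are toric and multiplication by $m$ on the torus extends to a degree-$m^2$ endomorphism; nonexistence for $K_S^2\le 5$ by showing that $\phi^*$ permutes the $(-1)$-curve classes (after dividing by $m=\sqrt{\deg\phi}$), that consequently $\phi^{-1}(E)$ is the single $(-1)$-curve $\sigma(E)$ occurring with multiplicity $m$ in $\phi^*E$, and that the resulting ramification $R_\phi\ge (m-1)\sum_{E'}E'$ is incompatible with $[R_\phi]=(m-1)(-K_S)$. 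All intermediate steps check out (primitivity of $(-1)$-classes, $F^2\ge -1$ for irreducible $F$ on a Del Pezzo, rigidity $h^0(\sO_S(E'))=1$, $\phi^*K_S=mK_S$).

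One small remark: the Weyl-group invariant-theory step is an unnecessary detour. Once you have the effective divisor $D=R_\phi-(m-1)\sum_{E'}E'$ with $[D]=(m-1)\bigl(-K_S-\sum_{E'}[E']\bigr)$, simply pair with the ample class $-K_S$: since $(-K_S)\cdot E'=1$ for each of the $N$ lines and $(-K_S)^2=K_S^2$, you get $D\cdot(-K_S)=(m-1)(K_S^2-N)$, which is negative exactly when $N>K_S^2$, i.e.\ when $K_S^2\le 5$. This bypasses the computation of $\sum_{E'}[E']$ as an explicit multiple of $-K_S$ and the appeal to triviality of $W$-invariants on $K_S^\perp$ (which, incidentally, you would also need to justify for $K_S^2=5$ where the root system is $A_4$ rather than a genuine $E$-type system, though the statement is of course still true there).
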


Our result is the following.

\begin{thm}\label{thm:morphism_Del_pezzo_surface}
Let $S$ and $T$ be smooth Del Pezzo surfaces with $K_S^2<K_T^2$, and let 
$f \colon S \to T$ be a finite morphism. 
Then
$K_T^2 \ge 8$.
\end{thm}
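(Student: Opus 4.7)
Since any finite birational morphism between smooth projective surfaces is an isomorphism and $K_S^2\neq K_T^2$, the degree $d:=\deg f$ is at least $2$. Riemann--Hurwitz gives $K_S=f^*K_T+R$ with $R\geq 0$ the effective ramification divisor. I argue by contradiction, assuming $K_T^2\leq 7$; then $T$ carries at least one $(-1)$-curve $E$, and the argument revolves around the pullback divisor $f^*E$.

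Write $f^*E=\sum_i m_iC_i$ with distinct irreducible components $C_i$ and multiplicities $m_i\geq 1$, and let $d_i=\deg(f|_{C_i})$. Three key relations arise:
\[
(f^*E)^2=-d,\qquad \sum_i m_id_i=d,\qquad \sum_i m_i(-K_S\cdot C_i)=d-R\cdot f^*E.
\]
Together with the Del Pezzo facts $C_i^2\geq -1$ (with equality iff $C_i$ is a $(-1)$-curve), $-K_S\cdot C_i\geq 1$, and $C_i\cdot C_j\geq 0$ for $i\neq j$, these yield the bounds
\[
\sum_{i:\,C_i^2=-1}m_i^2\geq d\qquad\text{and}\qquad \sum_i m_i\leq d.
\]
By squeezing these inequalities against the identity $\sum_i m_id_i=d$, I aim to prove that $f^*E$ is forced into the very rigid form $C_1+\cdots+C_d$, where the $C_j$ are $d$ pairwise disjoint $(-1)$-curves on $S$ mapped isomorphically onto $E$, and $E$ lies outside the branch divisor of $f$. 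Equivalently, $f$ is étale in a neighborhood of every $(-1)$-curve of $T$.

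The main obstacle is to convert this local rigidity into a global contradiction. Applying the analysis to \emph{every} $(-1)$-curve of $T$ produces a very constrained combinatorial pattern of $(-1)$-curves on $S$: for disjoint $(-1)$-curves $E,E'\subset T$ the respective families of $d$ preimage $(-1)$-curves on $S$ are pairwise disjoint, while for $E\cdot E'=1$ the $d^2$ pairwise intersections among the preimages must sum to $d$. I plan to derive a contradiction by a case analysis on $K_T^2\in\{1,\ldots,7\}$, exploiting the rigid combinatorics of $(-1)$-curves on Del Pezzo surfaces (encoded by the $E_r$ root systems) together with the Picard number bound $\rho(S)=10-K_S^2$. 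A slightly separate argument is needed when $K_T^2=1$, since $-K_T$ is not very ample there, but the combinatorial core should be uniform across the remaining cases. The delicate point is squeezing the inequalities in the second step to rule out all exotic shapes for $f^*E$ (with some $m_i\geq 2$, some $d_i\geq 2$, or some $C_i$ with $C_i^2\geq 0$), and then making the global combinatorics tight at each degree.
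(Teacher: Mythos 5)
There is a genuine gap in the local rigidity step, and it is fatal to the proposed strategy. Your three relations together with $\sum m_i^2\geq d$ and $\sum m_i\leq d$ do \emph{not} force $f^*E$ into the form $C_1+\cdots+C_d$ with $d$ disjoint $(-1)$-curves mapped isomorphically onto $E$. The configuration $f^*E=\delta C_1$, where $C_1$ is a single $(-1)$-curve, $m_1=\delta$, $d_1=\deg(f|_{C_1})=\delta$ and $d=\delta^2$, satisfies every constraint you list: $(f^*E)^2=\delta^2C_1^2=-d$, $\sum m_id_i=\delta\cdot\delta=d$, $\sum m_i=\delta\leq d$, $\sum m_i^2=\delta^2=d$, and $R\cdot f^*E=d-\delta=\delta(\delta-1)\geq 0$. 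There is no squeeze. Moreover this is not a hypothetical loophole: it is exactly what occurs. If $K_T^2=7$, so that $T$ is the blow-up $\nu\colon T\to\p^1\times\p^1$ with exceptional curve $F$, the paper shows (Step~1 of the proof) that $f^*F=\delta E$ with $\deg f=\delta^2$ and $f|_E\colon E\to F$ of degree $\delta$; and for the degree-$7$ Del Pezzo itself, the self-maps $(x_1,x_2)\times(y_1,y_2)\mapsto(x_1^m,x_2^m)\times(y_1^m,y_2^m)$ of Lemma \ref{endomorphism_S_7} realize this pattern. So the local picture you want to rule out is precisely the one that happens, and any proof must instead eliminate it by other means. (A minor secondary issue: your inequality $\sum m_i\leq d$ needs $R\cdot f^*E\geq 0$, i.e.\ $f_*R\cdot E\geq 0$, which is not a priori clear when $f$ ramifies over $E$; it does hold because $-K_S\cdot f^*E>0$ forces $\sum m_i>0$, but it should be justified and, in any case, does not save the argument.)

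The paper's proof runs on a completely different track. It uses the observation that preimages of $(-1)$-curves consist of $(-1)$-curves to build a commutative square of blow-downs $S\to M$, $T\to N$ over a finite map $g\colon M\to N$ of the same degree, thereby inducting down to $K_T^2=7$ and then to $N\cong\p^1\times\p^1$. The heavy lifting is then done by a classification of which Del Pezzo surfaces admit finite maps onto $\p^1\times\p^1$ (Proposition \ref{proposition:finite_morphism_quadric}, via the Hodge-index bound of Lemma \ref{lemma:hodge_index_theorem}), a classification of endomorphisms of the degree-$7$ surface (Lemma \ref{endomorphism_S_7}), and explicit line-bundle computations to reach a contradiction; in Step~2 the branch locus of $g$ and the positions of the blown-up points are tracked carefully. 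To salvage your approach you would need a new mechanism to exclude the concentrated case $f^*E=\delta C_1$ with $d_1=\delta$, and the paper suggests this requires more than intersection-theoretic bookkeeping near a single $(-1)$-curve.
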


\

\noindent {\bf Acknowledgements.} We are grateful to Cinzia \textsc{Casagrande} for very fruitful discussions.

\section{Notation and conventions}

Throughout this paper we work over the field of complex numbers. Varieties are always assumed to be reduced and irreducible. 

We denote by $\Sing(X)$ the singular locus of a variety $X$.

Let $X$ be a normal projective variety, and
$B=\sum a_i B_i$ an effective $\mathbb{Q}$-divisor on $X$, i.e., $B$ is  a nonnegative $\mathbb{Q}$-linear combination 
of distinct prime Weil divisors $B_i$'s on $X$. 
Suppose that $K_X+B$ is $\mathbb{Q}$-Cartier, i.e.,  some nonzero multiple of it is a Cartier divisor. 
Let $\mu:\tilde X\to X$ be a log resolution of the pair $(X,B)$. 
This means that $\tilde X$ is a smooth projective
variety, $\mu$ is a birational projective morphism whose exceptional locus is the union of prime divisors $E_i$'s, 
and the divisor $\sum E_i+\tilde{B}$ has simple normal crossing 
support, where $\tilde{B}$ denotes the strict transform of $B$ in $X$.  
There are uniquely defined rational numbers $a(E_i,X,B)$'s such that
$$
K_{\tilde X}+\tilde{B} = \mu^*(K_X+B)+\sum a(E_i,X,B)E_i.
$$
The $a(E_i,X,B)$'s do not depend on the log resolution $\mu$,
but only on the valuations associated to the $E_i$'s. 
We say that $(X,B)$ is \emph{canonical} (respectively, \emph{log terminal} or \emph{klt}) if  all $a_i\le 1$ (respectively, $a_i<1$), and, for some  log resolution 
$\mu:\tilde X\to X$ of $(X,B)$, $a(E_i,X,B)\ge 0$ (respectively, $a(E_i,X,B)\ge -1)$
for every $\mu$-exceptional prime divisor $E_i$.
We say that $(X,B)$ is \emph{log canonical} if, for some  log resolution 
$\mu:\tilde X\to X$ of $(X,B)$, $a(E_i,X,B) \ge -1$ 
for every $\mu$-exceptional prime divisor $E_i$.
If these conditions hold for some log resolution of $(X,B)$, then they hold for every  
log resolution of $(X,B)$.
We say that $X$ is canonical (respectively  log canonical)  if so is $(X,0)$.
We say that $X$ is \emph{Gorenstein}, if $X$ is locally Cohen-Macaulay and $K_X$ is Cartier.
Note that if $X$ is Gorenstein, then $X$ is canonical if and only if $X$ is klt.
We say that $X$ is $\mathbb{Q}$-\emph{Gorenstein}, if $K_X$ is $\mathbb{Q}$-Cartier.

Let $X$ be a normal projective variety such that $K_X$ is $\mathbb{Q}$-Cartier. 
We say that $X$ is \emph{$\mathbb{Q}$-Fano} if $-K_X$ is ample.
We say that a normal surface $S$ is a \emph{Del Pezzo} surface if $S$ is $\mathbb{Q}$-Fano with canonical singularitites. Note that a normal surface $S$ is canonical if and only if $S$ has Du Val singularities (see \cite[Theorem 4.5]{kollar_mori}).

Let $X$ be a normal projective variety, and
$B$ an effective $\mathbb{Q}$-divisor on $X$ such that $K_X+B$ is $\mathbb{Q}$-Cartier.
We say that $(X,B)$ is \emph{$\mathbb{Q}$-Fano} if $-(K_X+B)$ is ample. 

\medskip

If $\sE$ is a locally free sheaf of $\sO_X$-modules on a variety $X$, 
we denote by $\p_X(\sE)$ the Grothendieck projectivization $\textup{Proj}_X(\textup{Sym}(\sE))$,
and by $\sO_{\p_X(\sE)}(1)$ its tautological line bundle.

Given a positive integer $m$, we denote by $\mathbb{F}_m$ the surface $\mathbb{P}_{\p^1}\big(\sO_{\p^1}\oplus\sO_{\p^1}(-m)\big)$.

Given line bundles $\sL_1$ and $\sL_2$ on varieties $X_1$ and $X_2$, we denote by $\sL_1\boxtimes\sL_2$ the line bundle $\pi_1^*\sL_1\otimes\pi_2^*\sL_2$ on $X_1\times X_2$, where $\pi_1$ and $\pi_2$ are the projections onto 
$X_1$ and $X_2$, respectively.

\section{Double covers}

In this section we gather some properties of double covers of smooth (projective) varieties.

\begin{say}\label{say:double_cover}
Let $f \colon X \to Y$ be a finite surjective morphism of degree $2$. Suppose that $X$ is Cohen-Macaulay and $Y$ is smooth. Then  
there exist a line bundle $\sL$ on $Y$, a section $s\in H^0(Y,\sL^{\otimes 2})$, and an isomorphism $f_*\sO_X\cong \sO_Y\oplus \sL^{\otimes -1}$ of $\sO_Y$-algebras,
where the structure of $\sO_Y$-algebra on 
$\sO_Y\oplus \sL^{\otimes -1}$ is induced by $s^\vee\colon \sL^{\otimes -2} \to \sO_Y$. 
We refer to \cite{cossec_dolgachev} for details.
This implies that $X$ is Gorenstein with dualizing sheaf $\omega_X\cong f^*(\omega_Y\otimes \sL)$. 
If moreover $Y$ is projective, then $X$ is $\mathbb{Q}$-Fano if and only if $(\omega_Y\otimes \sL)^{\otimes -1}$ is an ample line bundle.

Denote by $B$ the divisor of zeroes of $s$. A straightforward local computation shows that $X$ is normal if and only if $B$ is reduced. By \cite[Proposition 5.20]{kollar_mori}, $X$ is canonical if and only if $(Y,\frac{1}{2}B)$ is klt.
\end{say}

\begin{lemma}\label{lemma:degree_double_cover}
Fix an integer $n \ge 1$, and let $f \colon X \to (\p^1)^n$ be a double cover branched along a reduced divisor $B$. 
Then $X$ is $\mathbb{Q}$-Fano if and only if $B$ has type
$(2d_1,\ldots,2d_n)$ with $d_i \in\{0,1\}$ for each $i\in\{1,\ldots,n\}$.
\end{lemma}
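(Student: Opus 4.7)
The plan is to unwind the criterion established in \ref{say:double_cover} and reduce to a direct computation in $\Pic\big((\p^1)^n\big)$. Let $Y=(\p^1)^n$ and let $\sL$ be the line bundle on $Y$ with $\sL^{\otimes 2}\cong \sO_Y(B)$ and a section cutting out $B$; by \ref{say:double_cover}, $X$ is Gorenstein with $\omega_X\cong f^*(\omega_Y\otimes \sL)$, so the Fano condition is equivalent to the ampleness of $(\omega_Y\otimes \sL)^{\otimes -1}$.

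First I would record the shape of the Picard group: $\Pic(Y)\cong \mathbb{Z}^n$ is torsion-free, so a line bundle admits a square root precisely when all its multidegrees are even, and this square root is then unique. Writing $\sO_Y(B)=\sO_Y(2d_1,\ldots,2d_n)$, we may therefore take $\sL=\sO_Y(d_1,\ldots,d_n)$. Because $B$ is effective, $H^0\big(Y,\sO_Y(2d_1,\ldots,2d_n)\big)\neq 0$, which forces $d_i\ge 0$ for every $i$. In particular the type of $B$ is automatically of the form $(2d_1,\ldots,2d_n)$ with $d_i\in \mathbb{Z}_{\ge 0}$.

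Next, using $\omega_{(\p^1)^n}\cong \sO_Y(-2,\ldots,-2)$, I would compute
$$(\omega_Y\otimes \sL)^{\otimes -1}\cong \sO_Y(2-d_1,\ldots,2-d_n).$$
On $Y=(\p^1)^n$ a line bundle $\sO_Y(a_1,\ldots,a_n)$ is ample if and only if every $a_i$ is strictly positive (as can be seen by restricting to the factors or by the toric criterion). Hence the ampleness of $(\omega_Y\otimes \sL)^{\otimes -1}$ is equivalent to $d_i\le 1$ for each $i$, which combined with $d_i\ge 0$ gives exactly $d_i\in\{0,1\}$.

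There is no real obstacle here: the statement is a direct combinatorial translation of the formula $\omega_X\cong f^*(\omega_Y\otimes\sL)$ from \ref{say:double_cover} together with the ampleness criterion on $(\p^1)^n$. The only mild point worth noting is the existence and uniqueness of the square root $\sL$, which reduces to the torsion-freeness of $\Pic\big((\p^1)^n\big)$.
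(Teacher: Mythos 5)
Your argument is correct and is exactly the intended one: the paper omits a proof of this lemma precisely because it follows directly from the criterion in \ref{say:double_cover} together with the computation of $\omega_{(\p^1)^n}$ and the standard ampleness criterion on $(\p^1)^n$. You fill in the omitted details (even multidegrees forced by torsion-freeness of $\Pic\big((\p^1)^n\big)$, nonnegativity from effectivity of $B$, and the reduction of ampleness of $\omega_X^{\otimes -1}$ to that of $(\omega_Y\otimes\sL)^{\otimes -1}$ via finiteness of $f$) in the natural way.
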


\begin{rem}In the setup of Lemma \ref{lemma:degree_double_cover}, denote by $m$ the cardinality of 
the set $\{1 \le i \le n\,|\,d_i=0\}$. Then 
$X \cong (\p^1)^m \times Y$ where $Y$ is a 
double cover of 
$(\p^1)^{n-m}$ branched along a 
reduced divisor of type $(2,\ldots,2)$.
\end{rem}

\begin{lemma}\label{lemma:picard_number_double_cover_bis}
Let $X$ and $Y$ be projective Fano manifolds, and 
let $X \to Y$ be a double cover branched along an ample divisor. Suppose furthermore that $\dim X=\dim Y \ge 3$.
Then $\rho(X)=\rho(Y)$. 
\end{lemma}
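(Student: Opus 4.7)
The plan is to reduce the equality of Picard numbers to an equality of second Betti numbers, and then to compute the excess cohomology via the eigensheaf decomposition of $f_*\mathbb{Q}_X$ under the covering involution. I would start by observing that, since $X$ and $Y$ are smooth Fano, Kodaira vanishing gives $\h^i(\sO)=0$ for every $i>0$ on both varieties, so the exponential exact sequence identifies $\NS(X)\otimes\mathbb{Q}$ with $\H^2(X,\mathbb{Q})$ and $\NS(Y)\otimes\mathbb{Q}$ with $\H^2(Y,\mathbb{Q})$. Hence $\rho(X)=\dim_\mathbb{Q}\H^2(X,\mathbb{Q})$ and $\rho(Y)=\dim_\mathbb{Q}\H^2(Y,\mathbb{Q})$, and the problem reduces to matching these dimensions.

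Next I would set up the eigensheaf decomposition. Let $B\subset Y$ be the branch divisor, which is smooth because $X$ is; set $U := Y\setminus B$ with open immersion $j\colon U\hookrightarrow Y$, and let $\iota\colon X\to X$ denote the covering involution. The decomposition of $f_*\mathbb{Q}_X$ into $\iota^*$-eigensheaves reads $f_*\mathbb{Q}_X = \mathbb{Q}_Y\oplus\sF$. Over $U$ the morphism $f$ is étale, so $\sF|_U$ is a rank-one $\mathbb{Q}$-local system whose monodromy around any local branch of $B$ is $-1$; on the other hand, for $b\in B$ the fiber $f^{-1}(b)$ is a single point, so the stalks of $\sF$ vanish along $B$. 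This identifies $\sF\cong j_!(\sF|_U)$, and therefore for every $k$ one gets
$$\H^k(X,\mathbb{Q}) \;\cong\; \H^k(Y,\mathbb{Q})\oplus\H^k_c(U,\sF|_U).$$

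The key step is then to establish $\H^2_c(U,\sF|_U)=0$. Write $n := \dim Y\ge 3$. Since $B$ is ample the open set $U$ is affine of dimension $n$, so Artin's vanishing theorem gives $\H^i(U,\sF|_U)=0$ for every $i>n$. The local system $\sF|_U$, being of rank one with monodromy in $\{\pm 1\}$, is self-dual, so Poincaré duality on the smooth $n$-dimensional variety $U$ identifies $\H^2_c(U,\sF|_U)$ with the dual of $\H^{2n-2}(U,\sF|_U)$; the latter vanishes because $2n-2>n$ when $n\ge 3$. Combined with the displayed decomposition this will yield $b_2(X)=b_2(Y)$, hence $\rho(X)=\rho(Y)$.

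The only substantial external input is Artin's affine vanishing theorem for constructible sheaves; once the identification $\sF\cong j_!(\sF|_U)$ is in hand the argument is formal, and the two hypotheses play transparent and essential roles, namely ampleness of $B$ to make $U$ affine, and $\dim Y\ge 3$ to ensure $2n-2>n$. A slightly different route that avoids sheaf theory would be to apply Grothendieck--Lefschetz directly to the ramification divisor $R\subset X$ (which is ample and isomorphic to $B$), giving $\Pic(X)\cong\Pic(R)\cong\Pic(B)\cong\Pic(Y)$ for $n\ge 4$; but since this requires a separate treatment of the case $n=3$, the uniform topological argument seems preferable.
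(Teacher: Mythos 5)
Your proof is correct, but it is genuinely different from (and more self-contained than) the argument in the paper. The paper disposes of the Betti number comparison in one line by invoking Lazarsfeld's Barth-type theorem (Theorem 2.1 of \cite{lazarsfeld_barth}), which controls the cokernel of $H^k(Y,\mathbb{C})\to H^k(X,\mathbb{C})$ by the groups $H^q(Y,\Omega^p_Y\otimes \sL^{-1})$ with $p+q=k$; these vanish for $k=2<n$ by Akizuki--Nakano, since $\sL$ is ample. Your route avoids coherent cohomology and Hodge theory entirely: you identify the eigensheaf $\sF$ with $j_!(\sF|_U)$ (the key observation, using that $\sF$ has zero stalks along the branch locus) and then kill $H^2_c(U,\sF|_U)$ by combining Artin's affine vanishing on $U=Y\setminus B$ with Poincar\'e duality for the self-dual rank-one local system. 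Both arguments hinge on ampleness of $B$ and on $n\ge 3$, and both yield $b_2(X)=b_2(Y)$, after which the Fano hypothesis gives $\rho=b_2$ on each side. What your topological approach buys is independence from the Nakano-type vanishing machinery (you could even run it over other fields of coefficients or in the \'etale setting); what the paper's citation buys is brevity. Your remark about the Grothendieck--Lefschetz shortcut via the ramification divisor $R\cong B$ is also correct, and you are right that it only handles $n\ge 4$ directly, which is exactly why the sheaf-theoretic (or Lazarsfeld) argument is the cleaner choice.
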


\begin{proof}
From \ref{say:double_cover} and \cite[Theorem 2.1]{lazarsfeld_barth}, we conclude that $b_2(X)=b_2(Y)$. 
Since $X$ and $Y$ are Fano manifolds, we also have $\rho(X)=b_2(X)$ and $\rho(Y)=b_2(Y)$,
proving the lemma.
\end{proof}

\begin{cor}\label{cor:picard_number_double_cover}
Let $X$ be a smooth double cover of $\p^1\times\cdots\times\p^1$ branched along a reduced divisor $B$ of type
$(2,\ldots,2)$ with $\dim X \ge 3$. Then $\rho(X)=\dim X$. 
\end{cor}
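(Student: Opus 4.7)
The plan is to reduce the statement directly to Lemma \ref{lemma:picard_number_double_cover_bis} applied to the given double cover $f\colon X \to Y := \p^1\times\cdots\times\p^1$, where the number of factors equals $\dim X$.

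First I would verify the hypotheses of Lemma \ref{lemma:picard_number_double_cover_bis}. The target $Y = (\p^1)^n$ is clearly a projective Fano manifold, and $\dim X = \dim Y = n \ge 3$ by assumption. Since $B$ has type $(2,\ldots,2)$, the associated line bundle $\sO_Y(B) \cong \sO_Y(2,\ldots,2)$ is ample on $Y$, so the branch divisor is ample. It remains to check that $X$ is a Fano manifold. Smoothness is part of the hypotheses, and by Lemma \ref{lemma:degree_double_cover} (applied with $d_i = 1$ for each $i$) the variety $X$ is $\mathbb{Q}$-Fano; since $X$ is smooth, $-K_X$ is automatically Cartier, so $X$ is actually a Fano manifold.

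Having verified the hypotheses, Lemma \ref{lemma:picard_number_double_cover_bis} yields $\rho(X) = \rho(Y)$. Since $\rho\bigl((\p^1)^n\bigr) = n = \dim X$, this gives $\rho(X) = \dim X$, as required.

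There is essentially no obstacle here: the corollary is a direct packaging of Lemmas \ref{lemma:degree_double_cover} and \ref{lemma:picard_number_double_cover_bis}. The only minor point to be careful about is the upgrade from ``$\mathbb{Q}$-Fano'' in Lemma \ref{lemma:degree_double_cover} to ``Fano manifold'' required by Lemma \ref{lemma:picard_number_double_cover_bis}, which is automatic because $X$ is smooth.
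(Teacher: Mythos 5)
Your proof is correct and follows the paper's own approach exactly: the paper's proof is precisely ``This follows from Lemma \ref{lemma:picard_number_double_cover_bis} together with Lemma \ref{lemma:degree_double_cover}.'' You have simply spelled out the routine verifications (ampleness of the branch divisor, $X$ smooth plus $\mathbb{Q}$-Fano implies Fano manifold, $\rho((\p^1)^n)=n$) that the paper leaves implicit.
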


\begin{proof}
This follows from Lemma \ref{lemma:picard_number_double_cover_bis} together with Lemma \ref{lemma:degree_double_cover}.
\end{proof}

The following example shows that the statement of Lemma \ref{lemma:picard_number_double_cover_bis} does not hold for surfaces.

\begin{exmp}Let $S$ be a double cover of
$\p^1\times\p^1$ branched along a smooth divisor of type $(2,2)$. Then $S$ is a (smooth) Del Pezzo 
surface of degree $K_S^2=4$, and $\rho(S)=6$.
\end{exmp}

We will need the following observations.

\begin{lemma}\label{lemma:factoriality_hypersurface_ring}
Let $(R,\mathfrak{m})$ be a regular local ring, and let $f_1,f_2\in\mathfrak{m}$ be coprime elements. Then the hypersurface ring $$R[t]_{(t)}/(t^2-f_1f_2)$$ is not factorial.
\end{lemma}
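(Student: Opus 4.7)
The plan is to exhibit a height-one prime of $S := R[t]_{(t)}/(t^2-f_1f_2)$ that is not principal. First, if $f_1f_2 = s^2$ for some $s\in R$, then $t^2-f_1f_2=(t-s)(t+s)$ in $R[t]$, so $S$ is not a domain and hence not factorial; so one may assume $f_1f_2$ is not a square. Since $R$ is a UFD (as a regular local ring) and $f_1$ is a nonzero non-unit, pick an irreducible factor $p\in R$ of $f_1$; by coprimality, $p\nmid f_2$. Consider $I:=(t,p)S$. It is prime, because $t^2-f_1f_2\in(t,p)R[t]$ (as $p\mid f_1$) yields $S/I\cong R/(p)$, a domain. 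It has height one: $S$ is Cohen--Macaulay of dimension $\dim R$ (a hypersurface in the regular local ring $R[t]_{(\mathfrak{m},t)}$), and $\dim S/I = \dim R/(p) = \dim R - 1$.

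The main step is to show $I$ is not principal. The structural input is the free $R$-module decomposition $S = R \oplus R\cdot t$, with multiplication governed by $t^2 = f_1f_2$. Assume for contradiction that $I = (g)$ with $g = a + bt$, $a, b \in R$. Writing $g = tu + pv$ with $u, v \in S$ and expanding in the basis $\{1,t\}$ shows that $a \in (p)R$ (the $R$-part of $tu$ lies in $(t^2) \subseteq (p)R$, and that of $pv$ lies in $(p)R$); so $a = pa'$ for some $a' \in R$. Using $p\in(g)$, write $p = g(e+ht)$ with $e,h\in R$; expanding yields
\[
ae + bh\,f_1f_2 = p, \qquad ah + be = 0.
\]
Substituting $a = pa'$ into the first equation and dividing by $p$ gives $a'e = 1 - bh(f_1f_2/p)$, whose right-hand side is congruent to $1$ modulo $\mathfrak{m}$ (because $f_2\in\mathfrak{m}$), hence a unit of $R$. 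So $e$ is a unit, and then the second equation yields $b = -pa'h/e \in (p)R$. We conclude $g \in (p)S$, so $(g)\subseteq(p)S$.

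The desired contradiction is immediate: $t \in I = (g) \subseteq (p)S$ would mean $t = p(s_0 + s_1 t)$ for some $s_0, s_1 \in R$, and comparing coefficients in the basis $\{1, t\}$ forces $ps_1 = 1$, violating $p\in\mathfrak{m}$. The main obstacle in the argument is the bookkeeping in the middle paragraph: one must combine the two scalar equations over $R$ coming from $p\in(g)$ with the observation that $f_1f_2/p$ still lies in $\mathfrak{m}$, in order to force one of the unknowns to be a unit and thereby collapse the putative generator into the ideal $(p)$.
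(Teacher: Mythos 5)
Your proof is correct, but it takes a genuinely different route from the paper's. The paper argues directly with the element $t$: since $t\in\mathfrak{n}\setminus\mathfrak{n}^2$ (with $\mathfrak{n}$ the maximal ideal of the hypersurface ring), $t$ is irreducible; if the ring were a UFD, $t$ would be prime, and analysing the factorization $t^2=f_1f_2$ against the coprimality of $f_1,f_2$ in $R$ forces one of them to be a unit, contradicting $f_1,f_2\in\mathfrak{m}$. You instead exhibit an explicit height-one prime $(t,p)S$, with $p$ an irreducible factor of $f_1$, and verify by coefficient computation in the free $R$-module $S=R\oplus Rt$ that it cannot be principal; you also isolate the case $f_1f_2=s^2$ (where $S$ is not even a domain), which the paper's argument leaves implicit. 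The paper's proof is shorter and slicker; yours is more computational but entirely self-contained, producing a concrete non-principal height-one prime as a witness to non-factoriality and making visible exactly where coprimality and $f_1,f_2\in\mathfrak{m}$ are used. One minor notational slip: the phrase ``the $R$-part of $tu$ lies in $(t^2)\subseteq(p)R$'' should be read as saying that the $R$-component of $tu$, namely $u_1f_1f_2$, lies in $(p)R$ because $p\mid f_1$; the ideal $(t^2)$ of $S$ is not literally an ideal of $R$.
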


\begin{proof}
Let $\mathfrak{n}$ be the maximal ideal of the ring 
$\sO=R[t]_{(t)}/(t^2-f_1f_2)$. Note
that $t$ is irreducible since $t\in \mathfrak{n}\setminus\mathfrak{n}^2$.
To prove the lemma, suppose to the contrary that $\sO$ is factorial.
Since $t^2=f_1f_2$ and $f_1$ and $f_2$ are coprime, we conclude that $f_1$ or $f_2$ is a unit. 
This yields a contradiction.
\end{proof}

\begin{lemma}\label{lemma:factoriality_hypersurface_ring_2}
Let $a,b,c,d,e$, and $f$ be complex numbers. 
The hypersurface ring 
$$\mathbb{C}[x,y,t]_{(x,y,t)}/\big(t^2-ax^2-bxy-cy^2-dx^2y-exy^2-fx^2y^2\big)$$ is not factorial. 
\end{lemma}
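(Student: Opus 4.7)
The plan is to exhibit a height-one prime ideal of
$$\sO \;:=\; \mathbb{C}[x,y,t]_{(x,y,t)}/\bigl(t^2 - P\bigr), \qquad P := ax^2+bxy+cy^2+dx^2y+exy^2+fx^2y^2,$$
that is not principal. Since a Noetherian local domain is factorial if and only if every height-one prime is principal (Krull), and a non-domain is a fortiori not factorial, this will finish the proof. Note that the preceding Lemma \ref{lemma:factoriality_hypersurface_ring} suggests trying to factor $P$ as a product of coprime elements in $\mathbb{C}[x,y]_{(x,y)}$ and invoking it directly, but for generic parameters $P$ has no such factorization in the algebraic local ring (only in the completion), so I prefer the class-group-style approach below.

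First I would dispose of the possibility that $\sO$ is not a domain. This happens precisely when $P = h^2$ for some $h \in \mathbb{C}[x,y]_{(x,y)}$, in which case $t^2 - P = (t-h)(t+h)$ and $\sO$ carries zero divisors. Assume henceforth that $\sO$ is a domain; it is then a two-dimensional Noetherian local ring. The key observation is that $P(0,y) = cy^2$. Pick $\gamma \in \mathbb{C}$ with $\gamma^2 = c$ (which exists since $\mathbb{C}$ is algebraically closed) and set
$$I \;:=\; (x,\, t - \gamma y)\,\sO.$$
The substitution $x = 0$, $t = \gamma y$ sends $t^2 - P$ to $\gamma^2 y^2 - cy^2 = 0$, so $\sO/I \cong \mathbb{C}[y]_{(y)}$. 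This is a one-dimensional domain, so $I$ is a prime ideal of height one in $\sO$.

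It remains to argue that $I$ is not principal. Because $t^2 - P$ lies in $(x,y,t)^2$, the cotangent space $\mathfrak{m}_\sO/\mathfrak{m}_\sO^2$ coincides with $(x,y,t)/(x,y,t)^2$ and is therefore three-dimensional over $\mathbb{C}$, with basis given by the images of $x$, $y$ and $t$. The images of $x$ and $t-\gamma y$ there are linearly independent, so by Nakayama's lemma the ideal $I$ requires at least two generators and cannot be principal. The whole argument rests on spotting the ideal $(x,\, t - \gamma y)$ by reading off the factorization $P(0,y) = (\gamma y)^2$; once this is in hand, the remaining verifications are routine and I do not foresee a substantive obstacle.
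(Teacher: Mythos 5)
Your proof is correct and takes essentially the same route as the paper's: the paper exhibits the line $\{y=0,\ t-a_1x=0\}$ with $a_1^2=a$ (the exact symmetric counterpart of your $\{x=0,\ t-\gamma y=0\}$ with $\gamma^2=c$) and concludes by noting that a smooth curve through the singular origin cannot be a Cartier divisor, which is just the geometric phrasing of your cotangent-space/Nakayama computation. You are also right that Lemma~\ref{lemma:factoriality_hypersurface_ring} does not apply directly, since $P$ need not factor in $\mathbb{C}[x,y]_{(x,y)}$.
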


\begin{proof}Let $a_1\in \mathbb{C}$ such that
$a=a_1^2$.
The hypersurface $X\subset \mathbb{A}^3$ given by equation 
$$t^2-ax^2-bxy-cy^2-dx^2y-exy^2-fx^2y^2=0$$ contains $(0,0,0)$ in its singular
locus.  Moreover, the line given by equations $$y=t-a_1x=0$$
is a smooth hypersurface on $X$ passing through $(0,0,0)$. Therefore $X$ is not locally factorial at $(0,0,0)$.
\end{proof}

The next result is an immediate consequence of Lemma \ref{lemma:factoriality_hypersurface_ring_2}.

\begin{cor}\label{cor:double_cover_factorial}
Let $X$ be a double cover of $\p^1\times\p^1$ branched along a reduced divisor of type $(2,2)$. Then $X$ is locally factorial if and only if it is smooth.
\end{cor}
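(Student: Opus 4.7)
The plan is to prove the two implications separately. The forward direction is immediate: a smooth variety is locally factorial since regular local rings are unique factorization domains by the Auslander--Buchsbaum theorem.

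For the converse, I argue the contrapositive: if $X$ is not smooth, then $X$ is not locally factorial. Suppose $p\in X$ is a singular point. Since the double cover $f\colon X\to \p^1\times\p^1$ is \'etale outside the branch divisor $B$, the image $q:=f(p)$ must lie on $B$. Writing $f_*\sO_X\cong \sO_{\p^1\times\p^1}\oplus \sL^{\otimes -1}$ as in \ref{say:double_cover}, and choosing a local trivialization of $\sL$ on an affine neighborhood of $q$ with coordinates $(x,y)$ centered at $q$, the section $s\in H^0(\p^1\times\p^1,\sL^{\otimes 2})=H^0(\p^1\times\p^1,\sO(2,2))$ cutting out $B$ becomes a polynomial of bidegree $\le (2,2)$:
$$g(x,y)=\sum_{0\le i,j\le 2} a_{ij}\,x^iy^j,$$
and $X$ is locally given by $t^2-g(x,y)=0$. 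The Jacobian of $t^2-g$ shows that $X$ is singular at $p=(0,0,0)$ if and only if $B$ is singular at $q$, which in turn forces $a_{00}=a_{10}=a_{01}=0$.

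Under these vanishings,
$$g(x,y)=a_{20}x^2+a_{11}xy+a_{02}y^2+a_{21}x^2y+a_{12}xy^2+a_{22}x^2y^2,$$
so the local ring of $X$ at $p$ is precisely of the form considered in Lemma \ref{lemma:factoriality_hypersurface_ring_2}. By that lemma the local ring is not factorial, hence $X$ is not locally factorial at $p$. There is essentially no obstacle here; the one point to notice is that the bidegree $(2,2)$ of $B$ is exactly what forces the local defining polynomial of $X$ at a singular point to lie in the specific family of polynomials handled by Lemma \ref{lemma:factoriality_hypersurface_ring_2}.
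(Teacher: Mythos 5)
Your proof is correct and follows exactly the route the paper intends: the paper declares the corollary an "immediate consequence" of Lemma \ref{lemma:factoriality_hypersurface_ring_2}, and you have supplied precisely the details that make it so, namely that a singular point of $X$ lies over a singular point of $B$, and in local bidegree-$(2,2)$ coordinates centered there the vanishing of the constant and linear coefficients forces the local equation into the family handled by that lemma. The forward direction via Auslander--Buchsbaum is likewise the standard argument.
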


\begin{lemma}\label{lemma:lci_factorial}
Let $f \colon X \to Y$ be a finite surjective morphism of degree $2$ with $X$ Cohen-Macaulay and $Y$ smooth.
If $\dim (\Sing(X)) \le \dim X-4$, then $X$ is locally factorial.
\end{lemma}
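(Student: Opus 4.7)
The plan is to invoke Grothendieck's theorem on factoriality of local complete intersections (SGA~2, Exp.~XI). The key observation is that a finite double cover of a smooth variety is locally a hypersurface in a smooth ambient space.

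First I would use \ref{say:double_cover} to write $f_*\sO_X\cong \sO_Y\oplus \sL^{\otimes -1}$ with algebra structure induced by a section $s\in \H^0(Y,\sL^{\otimes 2})$. Since $\sL$ is trivial in a Zariski neighborhood of any point of $Y$, locally $f$ has the form $\Spec(\sO_Y[t]/(t^2-s))\to \Spec(\sO_Y)$, so $X$ embeds locally as a hypersurface in the smooth scheme $Y\times\mathbb{A}^1$; in particular $X$ is a local complete intersection. I would also record that $X$ is automatically normal here: $X$ is Cohen-Macaulay by hypothesis and the assumption $\dim\Sing X\le\dim X-4$ gives $R_1$, so Serre's criterion applies.

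Since factoriality is a local property, it suffices to show that $A:=\sO_{X,x}$ is factorial for every closed point $x\in X$. If $x\notin\Sing X$ then $A$ is regular, so done. Otherwise $A$ is a local complete intersection of dimension $\dim X\ge\dim\Sing X+4\ge 4$, and Grothendieck's theorem (SGA~2, Exp.~XI, Cor.~3.14) says such a ring is factorial provided $A_{\mathfrak{p}}$ is factorial for every prime $\mathfrak{p}$ of height $\le 3$. To check this, let $y\in X$ be the point corresponding to $\mathfrak{p}$; by catenarity of the variety $X$, one has $\dim\overline{\{y\}}=\dim X-\textup{height}(\mathfrak{p})\ge\dim X-3>\dim\Sing X$. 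Since $\Sing X$ is closed, this forces $y\notin \Sing X$, so $A_{\mathfrak{p}}=\sO_{X,y}$ is regular and therefore factorial. Grothendieck's theorem then yields $A$ factorial.

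There is no substantial obstacle beyond citing the Grothendieck parafactoriality theorem correctly: the rest is bookkeeping, translating the geometric codimension hypothesis on $\Sing X$ into the algebraic factoriality-in-codimension-$\le 3$ hypothesis needed to apply it.
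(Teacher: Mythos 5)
Your proof is correct and takes essentially the same approach as the paper: the paper's proof is the one-liner ``follows from \ref{say:double_cover} and SGA~2, Exp.~XI, Cor.~3.14,'' which is exactly the argument you spelled out — the double-cover description exhibits $X$ locally as a hypersurface in $Y\times\mathbb{A}^1$, hence a local complete intersection, and the codimension hypothesis on $\Sing(X)$ supplies the ``factorial in codimension $\le 3$'' input that Grothendieck's theorem requires.
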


\begin{proof}
The lemma follows from \ref{say:double_cover} and \cite[Expos\'e XI Corollaire 3.14]{sga2}.
\end{proof}

The following example shows that the statement of Lemma \ref{lemma:lci_factorial} becomes wrong if one relaxes the
assumption on $\dim (\Sing(X))$.

\begin{exmp}
Let $B \subset \p^1\times\p^1\times\p^1$ be defined by equation
$$x_0^2y_0^2z_1^2+z_0^2x_1^2y_1^2+z_0z_1(x_0x_1y_1^2+y_0y_1x_1^2)=0$$ 
and let $X$ be the double cover 
of $\p^1\times\p^1\times\p^1$ branched along $B$. 
Denote by $f\colon X \to \p^1\times\p^1\times\p^1$ the natural morphism.
Set $P=(0,1)\times (0,1)\times (0,1)\in \p^1\times\p^1\times\p^1$ and $Q=f^{-1}(P)\in X$.
A straightforward computation shows that $X$ is normal and singular at $Q$.
The surface $S=f^{-1}\big(\{z_0=0\}\big)$ is the double cover of $\p^1\times\p^1$ branched along 
the divisor defined by equation $x_0^2y_0^2=0$. Hence, $S$ is the union of two copies of $\{z_0=0\}\cong \p^1\times\p^1$ passing through $Q$. 
Therefore $X$ is not locally factorial at $Q$.
\end{exmp}

\begin{lemma}\label{lemma:pi_1_revetement_double}
Let $X$ be a double cover of $\p^1\times\cdots\times\p^1$ branched along a reduced divisor $B$ of type 
$(2,\ldots,2)$. If $\dim \Sing(X)\le \dim X -3$, then 
$\pi_1\big(X\setminus \textup{Sing} (X)\big)\cong \{1\}$.
\end{lemma}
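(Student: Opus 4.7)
The plan is to identify $\pi_1(X\setminus\Sing(X))$ via the \'etale part of the double cover and thereby reduce the question to computing $\pi_1((\p^1)^n\setminus B)$. Write $Y=(\p^1)^n$, $U=X\setminus\Sing(X)$, $R=f^{-1}(B)_{\textup{red}}$, and $B^\circ=B\setminus\Sing(B)$. A local computation with the defining equation $t^2=F$ gives $\Sing(X)=f^{-1}(\Sing(B))$; therefore $R^\circ:=R\cap U=f^{-1}(B^\circ)$ is a smooth Cartier divisor in the smooth manifold $U$, and the restriction $f\colon X\setminus R\to Y\setminus B$ is \'etale of degree $2$.

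First I would show that $B$ is irreducible. Two distinct irreducible components $B_i,B_j$ of $B$ either meet --- necessarily in pure codimension $2$ in $Y$, giving $\dim\Sing(B)\ge n-2$, which contradicts the hypothesis --- or are disjoint, in which case $[B_i]\cdot[B_j]=0$ in the Chow ring $A^\ast(Y)=\mathbb{Z}[x_1,\ldots,x_n]/(x_k^2)$. Writing $[B_i]=\sum_k a_{i,k}x_k$ with non-negative $a_{\cdot,\cdot}$, this vanishing combined with positivity forces both components to be pullbacks of divisors from a common single factor $\p^1_k$; but then every component of $B$ is pulled back from that same factor, so $[B]$ would have only one nonzero coordinate, contradicting $[B]=(2,\ldots,2)$ for $n\ge 2$.

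The heart of the argument is to show $\pi_1(Y\setminus B)\cong\mathbb{Z}/2\mathbb{Z}$. Setting $Y^\circ:=Y\setminus\Sing(B)$, the removed set has complex codimension $\ge 3$, so $\pi_1(Y^\circ)=\pi_1(Y)=1$ and $H_2(Y^\circ)=H_2(Y)=\mathbb{Z}^n$. Inside $Y^\circ$ the divisor $B^\circ$ is smooth, irreducible, and closed; a Lefschetz-type theorem for the smooth locus of an ample hypersurface with codimension-$\ge 3$ singular locus (in the spirit of Goresky--MacPherson) gives $\pi_1(B^\circ)=1$. The tubular-neighborhood Van Kampen argument then makes $\pi_1(Y\setminus B)=\pi_1(Y^\circ\setminus B^\circ)$ cyclic, generated by a meridian $\gamma$ of $B^\circ$; its order is computed by the Gysin sequence as the cokernel of the intersection map $H_2(Y^\circ)\to H_0(B^\circ)=\mathbb{Z}$, which sends each projection-fiber class $[C_k]$ to $[C_k]\cdot[B]=2$, so the order is $2$.

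The finish is then quick: the \'etale double cover $X\setminus R\to Y\setminus B$ corresponds to the unique index-$2$ subgroup of $\pi_1(Y\setminus B)\cong\mathbb{Z}/2\mathbb{Z}$, which is trivial, so $\pi_1(X\setminus R)=1$. Since $R^\circ$ is a smooth divisor in the smooth manifold $U$ with $U\setminus R^\circ=X\setminus R$, the inclusion induces a surjection $\pi_1(X\setminus R)\twoheadrightarrow\pi_1(U)$, yielding $\pi_1(U)=1$. The main obstacle is the input $\pi_1(B^\circ)=1$: a Lefschetz theorem for the smooth locus of a singular ample hypersurface. As a backup one can bypass this by reducing $\pi_1(Y\setminus B)$ to a surface situation via iterated Zariski--Lefschetz on generic members of $|\sO_Y(1,\ldots,1)|$, which by the hypothesis $\dim\Sing(B)\le n-3$ cut $B$ in a smooth irreducible curve in a smooth simply connected projective surface, where the cyclic-complement computation is direct.
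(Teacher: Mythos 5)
Your approach is genuinely different from the paper's. The paper proves the lemma by projecting $X$ onto a factor $(\p^1)^{n-1}$ and invoking its Lemma \ref{lemma:pi_1}, which in turn uses Nori's exact sequence for the fundamental group of a fibration with rationally connected fibers together with simple connectivity of Fano manifolds; the conclusion falls out because $(\p^1)^{n-1}$ is smooth and simply connected. You instead attack $\pi_1\big((\p^1)^n\setminus B\big)$ head on, identify $X\setminus R\to (\p^1)^n\setminus B$ as the \'etale double cover determined by the surjection onto $H_1\cong\mathbb{Z}/2$, and deduce simple connectivity of $X\setminus R$, hence of $X\setminus\Sing(X)$. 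The reduction to the branch locus, the identification $\Sing(X)=f^{-1}(\Sing B)$, the irreducibility of $B$ via the Chow ring of $(\p^1)^n$, the $H_1$ computation by the Gysin/pair sequence, and the final two steps (passage through the double cover, and the surjection $\pi_1(X\setminus R)\twoheadrightarrow\pi_1(X\setminus\Sing X)$) are all correct.

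However, there is a genuine gap at the heart of the argument: the \emph{cyclicity} of $\pi_1\big((\p^1)^n\setminus B\big)$. Two problems. First, the claim $\pi_1(B^\circ)=1$ is unjustified and is in fact false for $n=2$, where $B=B^\circ$ is a smooth curve of type $(2,2)$, i.e.\ of genus one; even for $n\ge 3$, Lefschetz gives $\pi_1(B)=1$, but passing from $\pi_1(B)$ to $\pi_1(B^\circ)$ when $B$ is singular is not automatic (a normal $K3$ with an $A_1$-point already illustrates that removing the singular point can change $\pi_1$). Second, and independently, even granting $\pi_1(B^\circ)=1$, the tubular--neighborhood van Kampen argument shows only that $\pi_1(Y^\circ\setminus B^\circ)$ is \emph{normally} generated by a meridian, not that it is generated by a meridian; normal generation does not yield cyclicity. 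What is actually needed to close the argument is precisely the statement you relegate to the backup paragraph and call \textquotedblleft direct\textquotedblright: after slicing down by $n-2$ general $(1,\ldots,1)$-hypersurfaces, $C=B\cap S$ is a smooth irreducible ample curve with $C^2>0$ in a simply connected surface $S$, and one needs that $\pi_1(S\setminus C)$ is \emph{abelian}. That is Nori's weak Lefschetz theorem (from the very reference \cite{nori} the paper cites for a different purpose); it is not elementary. With that input, and with the Hamm--L\^e / Goresky--MacPherson Zariski--Lefschetz theorem to justify the iterated slicing of $\pi_1$ of the complement, your backup argument does go through, so the overall strategy is salvageable; but as written, the main line of the proposal does not establish cyclicity and the backup underestimates the strength of the needed input.
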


\begin{proof}
Set $n=\dim X$. By Lemma \ref{lemma:degree_double_cover}, $X$ is $\mathbb{Q}$-Fano.
If $n =1$, then $X\cong \p^1$, and hence $\pi_1(X)=\{1\}$ as claimed. Suppose that $n \ge 2$.
Then apply Lemma \ref{lemma:pi_1} below to any projection 
$f_1\colon X \to (\p^1)^{n-1}$ such that $f_1\big(\textup{Supp}(B)\big)=(\p^1)^{n-1}$.
\end{proof}

\begin{lemma}\label{lemma:pi_1}
Let $Y$ be a $\mathbb{Q}$-Fano variety of dimension  $n \ge 2$, and 
let $\pi\colon Y \to T$ be a surjective equidimensional morphism with 
connected fibers onto a normal variety such that 
$\dim \pi\big(\Sing(Y)\big) \le \dim T -2$. 
Then  
$\pi_1\big(T\setminus \textup{Sing} (T)\big)\cong \pi_1\big(Y\setminus \textup{Sing} (Y)\big)$.
\end{lemma}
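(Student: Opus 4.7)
The plan is to first shrink $Y$ and $T$ by removing codimension-$\ge 2$ loci without changing the topological fundamental group, so that the restricted morphism is a proper map between smooth quasi-projective varieties, and then to apply Koll\'ar's theorem that proper morphisms with rationally connected general fiber preserve the fundamental group.

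Concretely, set $T_0 := (T \setminus \Sing(T)) \setminus \overline{\pi(\Sing(Y))}$ and $Y_0 := \pi^{-1}(T_0)$. Since $T$ is normal, $\Sing(T)$ has codimension $\ge 2$ in $T$; by hypothesis so does $\overline{\pi(\Sing(Y))}$. Hence $T \setminus T_0$ has codimension $\ge 2$ in $T$, and the equidimensionality of $\pi$ forces $Y \setminus Y_0 = \pi^{-1}(T\setminus T_0)$ to have codimension $\ge 2$ in $Y$. By construction $Y_0$ is disjoint from $\Sing(Y)$, so $Y_0$ and $T_0$ are smooth Zariski opens, and their complements in $Y\setminus\Sing(Y)$ and $T\setminus\Sing(T)$ are closed analytic subsets of complex codimension $\ge 2$. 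Since removing such a subset from a complex manifold preserves the topological fundamental group, one gets $\pi_1(Y_0) \cong \pi_1(Y \setminus \Sing(Y))$ and $\pi_1(T_0) \cong \pi_1(T \setminus \Sing(T))$. It therefore suffices to show that the restriction $\pi_0 := \pi|_{Y_0} \colon Y_0 \to T_0$ induces an isomorphism on $\pi_1$.

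The morphism $\pi_0$ is proper (since $Y$ is projective), surjective, with connected fibers, between smooth quasi-projective complex varieties. A general fiber $F$ of $\pi_0$ is smooth by generic smoothness, and adjunction gives $-K_F = (-K_Y)|_F$, which is ample; hence $F$ is a smooth Fano variety, and therefore rationally connected by Campana and Koll\'ar--Miyaoka--Mori, in particular simply connected. Koll\'ar's theorem on fundamental groups of rationally connected fibrations (a proper surjective morphism between smooth complex varieties whose general fiber is rationally connected induces an isomorphism on topological fundamental groups) applied to $\pi_0$ yields $\pi_1(Y_0) \cong \pi_1(T_0)$, finishing the proof.

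The main obstacle is this last step: Steps 1 and 2 are essentially dimension/codimension bookkeeping, but the crux is the non-trivial fact that rationally connected fibrations preserve the topological fundamental group. The only role of the $\mathbb{Q}$-Fano hypothesis on $Y$ is through adjunction on a general fiber together with the fact that smooth Fano varieties are rationally connected; the equidimensionality assumption is needed to propagate the codimension-$\ge 2$ control from $T$ to $Y$.
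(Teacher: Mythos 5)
Your proposal follows the same overall strategy as the paper: shrink $Y$ and $T$ to smooth quasi-projective opens $Y_0$ and $T_0$ whose complements have codimension $\ge 2$ (so that $\pi_1$ is unchanged, using equidimensionality to transfer the codimension bound from $T$ to $Y$), observe that $\pi_0\colon Y_0\to T_0$ is proper with connected fibers and that a general fiber is a smooth Fano manifold, hence rationally connected and simply connected, and conclude $\pi_1(Y_0)\cong\pi_1(T_0)$. The only real difference is in how the last step is justified. You invoke a packaged theorem of Koll\'ar asserting that a proper surjective morphism of smooth complex varieties with rationally connected general fiber induces an isomorphism on $\pi_1$; the paper instead derives this explicitly by choosing a general complete-intersection curve $B$ inside the \emph{projective} variety $T$ (which is automatically disjoint from the removed codimension-$\ge 2$ locus), applying Graber--Harris--Starr to get a section of $\pi$ over $B$, so that each fiber over $B$ has a smooth point, and then feeding this into Nori's Lemma~1.5 to obtain the exact sequence $\pi_1(F)\to\pi_1(Y_0)\to\pi_1(T_0)\to 1$. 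Be a little careful with the citation you are leaning on: the commonly quoted form of Koll\'ar's theorem assumes the total space is \emph{proper}, whereas $Y_0$ here is only quasi-projective; the quasi-projective statement you need is precisely what the paper's GHS-plus-Nori argument supplies (the key being that $B$ can be taken to be a complete curve in $T$). So either cite a source that treats the quasi-projective case explicitly, or unfold the argument as the paper does.
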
 

\begin{proof}Let $F$ be a general fiber of $\pi$. Then $F$ is a Fano manifold, and therefore $F$ is rationally 
chain connected (see \cite{campana92} and \cite{kmm3}). Consider a smooth general complete intersection curve 
$B \subset T$, and set $Z=\pi^{-1}(B)$. Then $Z$ is smooth, and the morphism $\pi_Z\colon Z \to B$ 
induced by the restriction
of $\pi$ to $Z$ has  rationally chain connected general fibers.
By \cite{ghs03}, we conclude that the scheme theoretic fiber $\pi_{Z}^{-1}(b)$ has a smooth point for each $b\in B$. Therefore, there exists a codimension $\ge 2$ closed subset $G$ of $T$ such that 
\begin{itemize}
\item $\textup{Sing}(T)\cup\pi\big(\textup{Sing}(Y)\big) \subset G$,
\item for each $t\in T \setminus G$, the scheme theoretic fiber $\pi^{-1}(p)$ has a smooth point.
\end{itemize}
Thus, by \cite[Lemma 1.5]{nori}, there is an exact sequence
$$ \pi_1(F) \to \pi_{1}\big(Y \setminus \pi^{-1}(G)\big) \to \pi_1(T \setminus G) \to 1.$$
Now, we have $\pi_1(T \setminus G)\cong\pi_1\big(T\setminus \textup{Sing} (T)\big)$ 
and $\pi_{1}\big(Y \setminus \pi^{-1}(G)\big)\cong \pi_1\big(Y\setminus \textup{Sing} (Y)\big)$
since $S$ and 
$\pi^{-1}(G)$ have codimension $\ge 2$ respectively. 
Fano manifolds being simply connected, we conclude that 
$\pi_1\big(T\setminus \textup{Sing} (T)\big)\cong \pi_1\big(Y\setminus \textup{Sing} (Y)\big)$, proving the lemma.
\end{proof}

\begin{rem}
Example \ref{exmp:del_pezzo_degree4_picard_2} shows that the statements of Lemmata \ref{lemma:pi_1_revetement_double} and \ref{lemma:pi_1} becomes wrong if one relaxes the assumption on $\dim \pi\big(\Sing(Y)\big)$.
\end{rem}

\section{Mori point of view}\label{pullback}

Let $\phi\colon X \to Y$ be an arbitrary surjective morphism of normal projective varieties. The
natural morphism $\phi^*\colon \N^1(Y) \to \N^1(X)$ is injective,
$$\phi^*\big(\Nef(Y)\big)=\Nef(X)\cap \phi^*\big(\N^1(Y)\big),\quad\text{and}\quad \phi^*\big(\Pef(Y)\big)=\Pef(X)\cap \phi^*\big(\N^1(Y)\big).$$

Let now $X$ and $Y$ be normal projective varieties. If $\Nef(X\times Y)=\Pef(X\times Y)$,
then it is easy to see that $\Nef(X)=\Pef(X)$ and $\Nef(Y)=\Pef(Y)$.
We have the following partial converse to the above statement.

\begin{lemma}
Let $X$ and $Y$ be normal projective varieties. 
Suppose that $h^1(X,\sO_X)=0$. If $\Nef(X)=\Pef(X)$ and $\Nef(Y)=\Pef(Y)$, then  
$\Nef(X\times Y)=\Pef(X\times Y)$.
\end{lemma}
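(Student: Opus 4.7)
The plan is to use the vanishing $h^1(X,\sO_X)=0$ to obtain a K\"unneth-type decomposition of $\NS(X\times Y)$, and then to analyze both the nef and the pseudo-effective cones componentwise, reducing the statement to the hypotheses on the factors.

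\textbf{First step: decomposition of $\NS(X\times Y)$.} The vanishing $h^1(X,\sO_X)=0$ forces $\Pic^0(X)=0$ (its tangent space at the identity vanishes and in characteristic zero $\Pic^0$ is reduced). Together with the seesaw principle, this gives
$$\Pic(X\times Y)=p_X^*\Pic(X)\oplus p_Y^*\Pic(Y),$$
and consequently, after passing to numerical equivalence classes,
$$\NS(X\times Y)=p_X^*\NS(X)\oplus p_Y^*\NS(Y).$$
Every class $\alpha\in\NS(X\times Y)$ therefore admits a unique expression $\alpha=p_X^*\alpha_X+p_Y^*\alpha_Y$.

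\textbf{Second step: componentwise decomposition of $\Nef$.} Dualizing, fix base points $x_0\in X$ and $y_0\in Y$; the inclusions $\iota_X\colon X\cong X\times\{y_0\}\hookrightarrow X\times Y$ and $\iota_Y\colon Y\cong\{x_0\}\times Y\hookrightarrow X\times Y$ induce a direct sum decomposition $\N_1(X\times Y)=\iota_{X,*}\N_1(X)\oplus\iota_{Y,*}\N_1(Y)$, independent of the base points since different choices give algebraically equivalent cycles. Since $p_{X,*}$ and $p_{Y,*}$ send effective curve classes to effective curve classes (or zero), and since $\iota_{X,*},\iota_{Y,*}$ are sections of $p_{X,*},p_{Y,*}$, any effective curve class decomposes as a sum of an effective class on $X$ and an effective class on $Y$; passing to closures,
$$\bNE(X\times Y)=\iota_{X,*}\bNE(X)+\iota_{Y,*}\bNE(Y).$$
Dualizing once more via the projection formula yields
$$\Nef(X\times Y)=p_X^*\Nef(X)\oplus p_Y^*\Nef(Y);$$
equivalently, $\alpha=p_X^*\alpha_X+p_Y^*\alpha_Y$ is nef iff both $\alpha_X\in\Nef(X)$ and $\alpha_Y\in\Nef(Y)$.

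\textbf{Third step and conclusion.} Suppose $\alpha\in\Pef(X\times Y)$, and fix an ample class $A$ on $X\times Y$. For each $\epsilon>0$ the class $\alpha+\epsilon A$ is big, hence represented by an effective $\mathbb{Q}$-divisor $D_\epsilon$. For a general $y\in Y$ the fiber $F_y=X\times\{y\}$ is not contained in $\Supp(D_\epsilon)$, so $D_\epsilon|_{F_y}$ is an effective $\mathbb{Q}$-divisor representing the class $\alpha_X+\epsilon\,A|_{F_y}\in\NS(X)$. Letting $\epsilon\to 0$ gives $\alpha_X\in\Pef(X)$, and symmetrically $\alpha_Y\in\Pef(Y)$. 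The hypotheses $\Nef(X)=\Pef(X)$ and $\Nef(Y)=\Pef(Y)$ then give $\alpha_X\in\Nef(X)$ and $\alpha_Y\in\Nef(Y)$, so by the second step $\alpha\in\Nef(X\times Y)$.

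The only place where the assumption $h^1(X,\sO_X)=0$ plays a role is the first step (the K\"unneth decomposition of $\Pic$); everything else is a formal consequence of the product structure and a standard restriction-to-general-fiber argument for pseudo-effective classes.
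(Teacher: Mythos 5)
Your proof is correct and follows the paper's approach: use $h^1(X,\sO_X)=0$ to obtain the K\"unneth decomposition $\NS(X\times Y)=p_X^*\NS(X)\oplus p_Y^*\NS(Y)$ and then verify that the nef and pseudo-effective cones decompose compatibly. The paper compresses the cone computations into a single \emph{it is easy to see}; you supply those details---dualizing the decomposition of the Mori cone for $\Nef$, and restricting to a general fiber $X\times\{y\}$ for the pseudo-effective side---and in fact you prove only the one containment ($\alpha$ pseudo-effective implies $\alpha_X,\alpha_Y$ pseudo-effective) that is actually needed, rather than the full product isomorphism the paper asserts.
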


\begin{proof}
Since $h^1(X,\sO_X)=0$, we have $\Pic(X\times Y)\cong \Pic(X)\times\Pic(Y)$. It is easy to see that 
$\Nef(X\times Y)\cong \Nef(X)\times\Nef(Y)$, and
$\Pef(X\times Y)\cong \Pef(X)\times\Pef(Y)$. The lemma follows.
\end{proof}

We will make use of the following elementary lemma.

\begin{lemma}\label{lemma:image_nef_egal_psef}
Let $f\colon X \to Y$ be an arbitrary surjective morphism of normal projective varieties. If
$\Nef(X)=\Pef(X)$, then $\Nef(Y)=\Pef(Y)$.
\end{lemma}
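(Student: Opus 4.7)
The plan is to use the two pullback identities recalled at the beginning of Section~\ref{pullback}, namely that $f^*\colon \N^1(Y)\to \N^1(X)$ is injective and satisfies
\[
f^*\bigl(\Nef(Y)\bigr)=\Nef(X)\cap f^*\bigl(\N^1(Y)\bigr)\quad\text{and}\quad f^*\bigl(\Pef(Y)\bigr)=\Pef(X)\cap f^*\bigl(\N^1(Y)\bigr).
\]
The inclusion $\Nef(Y)\subset \Pef(Y)$ is automatic on any normal projective variety, since ample classes are effective and $\Pef(Y)$ is closed, so only the reverse inclusion $\Pef(Y)\subset \Nef(Y)$ requires argument.

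To this end, I would take an arbitrary class $\alpha\in \Pef(Y)$. By the second identity above, $f^*\alpha$ lies in $\Pef(X)\cap f^*(\N^1(Y))$, and the hypothesis $\Pef(X)=\Nef(X)$ then gives $f^*\alpha\in \Nef(X)\cap f^*(\N^1(Y))$. By the first identity, this means $f^*\alpha\in f^*(\Nef(Y))$, and the injectivity of $f^*$ forces $\alpha\in \Nef(Y)$. This shows $\Pef(Y)\subset \Nef(Y)$, and combined with the automatic reverse inclusion yields the desired equality.

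There is no real obstacle here: the entire argument is a three-line chase through the pullback, and the only thing to keep in mind is that $f^*$ is injective so that one can genuinely conclude $\alpha\in \Nef(Y)$ from $f^*\alpha\in f^*(\Nef(Y))$. This injectivity, together with the two cone identities, is exactly what was already stated in the paragraph preceding the lemma, so no additional ingredient is needed.
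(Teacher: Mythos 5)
Your argument is correct and is essentially the same as the paper's: the paper's one-line proof, "the lemma follows from the projection formula," is just a compressed reference to the two cone identities stated immediately before the lemma (which are themselves consequences of the projection formula), and your write-up simply spells out the resulting three-step chase through $f^*$ together with the injectivity of $f^*$. Nothing is missing.
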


\begin{proof}
The lemma follows from the projection formula.
\end{proof}

\begin{rem}
Let $\phi\colon X \to \p^1\times\p^1\times\p^1$ be a double cover branched along a smooth divisor of type 
$(2,2,2)$, and let $\pi\colon X \to \p^1$ be a projection. Then a general fiber $F$ of $\pi$ is a smooth Del Pezzo surface of degree $4$ with $\Nef(F)\subsetneq\Pef(F)$.
\end{rem}

The following observation will prove to be crucial.

\begin{lemma}\label{lemma:nef=psef_contractions}
Let $(X,B)$ be a $\mathbb{Q}$-Fano pair 
with $\mathbb{Q}$-factorial log canonical singularities. Then the following conditions are 
equivalent.
\begin{enumerate}
\item $\Nef(X)=\Pef(X)$.
\item Any effective Cartier divisor on $X$ is semiample. 
\item Any elementary contraction $X \to Y$ satisfies $\dim Y < \dim X$. 
\item Any non-trivial contraction $X \to Y$ satisfies $\dim Y < \dim X$.
\end{enumerate}
\end{lemma}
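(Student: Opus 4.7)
The plan is to establish the cycle of implications $(1) \Leftrightarrow (2)$ and $(1) \Leftrightarrow (3) \Leftrightarrow (4)$, drawing on the cone and base point free theorems for log canonical Fano pairs.

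First, to prove $(1) \Rightarrow (2)$, any effective Cartier divisor $D$ lies in $\Pef(X)$, which by hypothesis equals $\Nef(X)$; therefore $D$ is nef, and the log Kawamata--Shokurov base point free theorem (valid for $\mathbb{Q}$-factorial log canonical Fano pairs after Fujino) yields that $D$ is semiample. For the converse $(2) \Rightarrow (1)$, $\mathbb{Q}$-factoriality of $X$ guarantees that every effective $\mathbb{Q}$-divisor has a positive Cartier multiple, which by (2) is semiample, hence nef. Thus every rational class in $\Pef(X)$ is nef; since such classes are dense in $\Pef(X)$ and $\Nef(X)$ is closed, we obtain $\Pef(X) \subseteq \Nef(X)$, giving equality.

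For $(1) \Leftrightarrow (3)$, I would argue through duality. By the BDPP theorem, $\Pef(X)^{\vee} = \overline{\textup{Mov}}_1(X) \subseteq \N_1(X)$ is the closed cone of movable curve classes, while $\Nef(X)^{\vee} = \bNE(X)$. Hence $\Nef(X) = \Pef(X)$ is equivalent to $\bNE(X) = \overline{\textup{Mov}}_1(X)$. By the cone theorem for log canonical Fano pairs, $\bNE(X)$ is rational polyhedral, and its extremal rays correspond bijectively to elementary contractions $\phi_R\colon X \to Y$. Such an extremal ray $R$ is generated by a movable curve class---i.e.\ lies in $\overline{\textup{Mov}}_1(X)$---if and only if the curves of class in $R$ cover $X$, which happens if and only if $\phi_R$ is of fiber type with $\dim Y < \dim X$. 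This gives $(1) \Leftrightarrow (3)$.

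The equivalence $(3) \Leftrightarrow (4)$ follows by factoring arbitrary contractions. Since elementary contractions are non-trivial, $(4) \Rightarrow (3)$ is immediate. Conversely, any non-trivial contraction $X \to Y$ corresponds to a face of $\bNE(X)$, and by contracting one extremal ray at a time we factor it as $X = X_0 \to X_1 \to \cdots \to X_k = Y$, where each step is elementary and each intermediate $X_i$ inherits the log Fano $\mathbb{Q}$-factorial log canonical property. Under (3), each such elementary step is of fiber type; since a composition of fiber-type morphisms is of fiber type, $\dim Y < \dim X$.

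The main technical obstacle lies in the direction of $(1) \Leftrightarrow (3)$ asserting that \emph{birational} elementary contractions cannot have their extremal ray inside $\overline{\textup{Mov}}_1(X)$. The divisorial case is easy: the exceptional prime divisor $E$ pairs negatively with the contracted curves, exhibiting $E \in \Pef(X) \setminus \Nef(X)$ directly. The small case is more subtle since there is no exceptional divisor; one clean way to handle it is to invoke BCHM (or its log canonical analogue) to construct the flip $X^+$, transport an ample divisor from $X^+$ via the codimension-one isomorphism $X \dashrightarrow X^+$ to produce an effective Cartier divisor on $X$ pairing negatively with the flipping ray. Alternatively, one argues directly that a movable curve class must be represented by a covering family, which a birational extremal ray cannot admit since its curves lie in an exceptional locus of codimension at least one.
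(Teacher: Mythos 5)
Your proposal is correct in spirit but follows a genuinely different route from the paper. The paper proves the cycle $(1)\Rightarrow(2)\Rightarrow(3)\Rightarrow(4)\Rightarrow(1)$: the step $(2)\Rightarrow(3)$ is proved by contradiction via a direct computation of global sections; given a birational elementary contraction $\phi\colon X\to Y$, the paper produces an effective Cartier divisor $E\sim_{\mathbb{Q}} K_X+B+\phi^*A$ by pushing forward sections from $Y$ (separating into the divisorial case, where $K_X+B=\phi^*(K_Y+C)+aF$ with $a\ge 0$, and the small case, where $X$ and $Y$ are isomorphic in codimension one), and $E$ is visibly not nef since $K_X+B$ is anti-ample and $\phi^*A$ is only nef. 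The step $(4)\Rightarrow(1)$ is a two-line argument: if an effective $E$ were not nef, the contraction of a negative extremal ray would have $\Exc(\phi)\subset\Supp(E)\subsetneq X$, hence be birational, contradicting $(4)$.

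Your route instead goes through BDPP duality, $\Pef(X)^\vee=\overline{\textup{Mov}}_1(X)$, and a dichotomy for extremal rays. This buys you a conceptually appealing picture, but at the cost of heavier machinery: BDPP is stated for smooth projective varieties, and invoking it for $\mathbb{Q}$-factorial log canonical varieties requires either a reduction through a resolution or a citation to one of its singular extensions, neither of which you supply. More concretely, your treatment of the small contraction case is where the real work hides: you either invoke existence of log canonical flips (Birkar, Hacon--Xu) to transport an ample divisor from $X^+$, or you wave at a ``movable classes are represented by covering families'' principle which is not literally true for classes on the boundary of the closed movable cone. The paper's section-pushing argument handles divisorial and small contractions uniformly and avoids both BDPP and flips entirely, so it is both more elementary and more self-contained. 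Your $(2)\Rightarrow(1)$ via density of rational classes and $\mathbb{Q}$-factoriality is valid but redundant if one simply closes the cycle as the paper does. In short: your proof can likely be completed, but it trades the paper's short elementary argument for deep results whose applicability in this singular setting you would need to justify carefully.
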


\begin{proof}
(1) $\Rightarrow$ (2) Let $E$ be an effective Cartier divisor on $X$.
Then $E$ is nef and thus $E-(K_X+B)$ is ample. By \cite[Theorem 13.1]{fujino_mmp}, we conclude that $E$ is semiample.

\medskip

(2) $\Rightarrow$ (3) Let $\phi\colon X \to Y$ be an elementary contraction. We argue by contradiction, and assume that
$\dim Y=\dim X$. Set $C=\phi_*B$. Let $A$ be an ample Cartier divisor on $Y$ such that $K_Y+C+A$ is $\mathbb{Q}$-ample, and
let $m$ be a positive integer such that $mB\in Z^1(X)_\mathbb{Z}$ and 
$h^0\big(Y,\sO_Y(m(K_Y+C+A))\big) \ge 1$.

Suppose first that $\dim \textup{Exc}(\phi)=\dim X-1$. 
Then $F=\textup{Exc}(\phi)$ is 
irreducible, $Y$ is $\mathbb{Q}$-factorial, and $K_X+B=\phi^*(K_Y+C)+aF$ for some rational number 
$a \ge0$. Assume in addition that $ma \in \mathbb{Z}$. Then
$$
\begin{array}{ccll}
h^0\big(X,\sO_X(m(K_X+B+\phi^*A))\big) & =  & 
h^0\big(X,\phi^*\sO_Y(m(K_Y+C+A)+maF)\big)
& \\
& = & h^0\big(Y,\sO_Y(m(K_Y+C+A))\big)  
& \text{ since $ma \in \mathbb{N}$}\\
& \ge  & 1. &\\
\end{array}
$$
Thus, there exists 
an effective Cartier divisor $E \sim_\mathbb{Q}K_X+B+\phi^*A$. 
This yields a contradiction since $K_X+B+\phi^*A$ is not nef.

Suppose now that $\dim \textup{Exc}(\phi) \le \dim X-2$. 
Set $X^\circ=X\setminus \textup{Exc}(\phi)$ and $Y^\circ=Y\setminus \phi(\textup{Exc}(\phi))$. 
Then
$$
\begin{array}{cccl}
h^0\big(X,\sO_X(m(K_X+B+\phi^*A))\big) & =  & h^0\big(X^\circ,\sO_X(m(K_X+B+\phi^*A))\big) 
& \text{ since $\codim\,\textup{Exc}(\phi)\ge 2$}\\
& = & h^0\big(Y^\circ,\sO_Y(m(K_Y+C+A))\big)& \text{ since 
$X^\circ\cong Y^\circ$}\\
& = & h^0\big(Y,\sO_Y(m(K_Y+C+A))\big) & \text{ since $\codim\,\phi(\textup{Exc}(\phi))\ge 2$}\\
& \ge  & 1. &\\
\end{array}
$$
We conclude as before that there exists 
an effective divisor $E \sim_\mathbb{Q}K_X+B+\phi^*A$, yielding a contradiction since 
$K_X+B+\phi^*A$ is not nef. This proves that $\dim Y < \dim X$.

\medskip

(3) $\Rightarrow$ (4) is obvious.

\medskip

(4) $\Rightarrow$ (1) Let $E$ be an effective Cartier divisor. Suppose that $E$ is not nef. Then there exists an extremal ray 
$R \subset \NE(X)$ such that $E \cdot C<0$ for every curve $C$ with $[C]\in R$. Let $\phi\colon X \to Y$ be the corresponding contraction. Then we must have $\textup{Exc}(\phi)\subset \textup{Supp}(E)$, yielding a contradiction, and completing the proof of the lemma.
\end{proof}

To prove Theorem \ref{thm:main}, we will argue by induction on $\dim X$. We will make use of the following result.

\begin{lemma}\label{lemma:target_log_Fano}
Let $(X,B)$ be a $\mathbb{Q}$-Fano pair 
with log terminal (respectively, log canonical) singularities. Let $\phi\colon X \to Y$ be any contraction. Then 
there exists an effective $\mathbb{Q}$-divisor $B_Y$ on $Y$ such that 
$(Y,B_Y)$ is $\mathbb{Q}$-Fano with log terminal (respectively, log canonical) singularities.
\end{lemma}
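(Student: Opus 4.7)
The plan is to first perturb $B$ to a boundary that is $\mathbb{Q}$-linearly trivial over $\phi$ up to an ample pullback, and then invoke the canonical bundle formula to descend the log Fano property to $Y$.

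First, since $-(K_X+B)$ is ample on $X$, I pick an ample $\mathbb{Q}$-Cartier $\mathbb{Q}$-divisor $A$ on $Y$ so small that $H:=-(K_X+B)-\phi^{*}A$ remains ample. Because $H$ is ample, a Bertini-type argument lets me find an effective $\mathbb{Q}$-divisor $D\sim_{\mathbb{Q}}H$ with coefficients small enough that $(X,B+D)$ still has log terminal (respectively log canonical) singularities: concretely, for $m$ sufficiently divisible, pick a general member $D_{0}\in|mH|$ and set $D=\frac{1}{m}D_{0}$. Writing $B^{+}:=B+D$, one obtains
\[
K_{X}+B^{+}\sim_{\mathbb{Q}}-\phi^{*}A,
\]
so $K_{X}+B^{+}$ is $\mathbb{Q}$-linearly trivial relative to $\phi$, and $(X,B^{+})$ is klt (resp.\ lc).

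Next, I apply the canonical bundle formula of Ambro--Kawamata (in the klt case) or its extension by Fujino (in the lc case) to the contraction $\phi\colon(X,B^{+})\to Y$. This produces effective $\mathbb{Q}$-divisors $B_{Y}$ and $M_{Y}$ on $Y$ with $M_{Y}$ nef, the pair $(Y,B_{Y})$ klt (respectively lc), and
\[
K_{X}+B^{+}\sim_{\mathbb{Q}}\phi^{*}(K_{Y}+B_{Y}+M_{Y}).
\]
Combining the two displayed relations and using injectivity of $\phi^{*}\colon\N^{1}(Y)\to\N^{1}(X)$, I get
\[
-(K_{Y}+B_{Y})\sim_{\mathbb{Q}}A+M_{Y},
\]
which is the sum of an ample and a nef $\mathbb{Q}$-divisor, hence ample. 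Therefore $(Y,B_{Y})$ is a $\mathbb{Q}$-Fano pair with klt (resp.\ lc) singularities.

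The main technical input is the canonical bundle formula, which both produces the boundary $B_{Y}$ and certifies the correct singularity type on $Y$; the perturbation of $B$ to $B^{+}$ is routine given the ampleness of $-(K_X+B)$. In the lc setting the nefness of the moduli part is more delicate than in the klt setting, so that is the step I would be most careful about when filling in the details.
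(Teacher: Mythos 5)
Your proposal is correct and follows essentially the same route as the paper: perturb $B$ by a general effective $\mathbb{Q}$-divisor $\sim_{\mathbb{Q}}-(K_X+B)-\phi^*A$ so that the new pair is klt (resp.\ lc) and $\mathbb{Q}$-trivial over $Y$ up to $-\phi^*A$, then invoke the canonical bundle formula to descend the boundary. The only cosmetic difference is that the paper cites versions of the formula (Ambro's Theorem~4.1 and Fujino--Gongyo's Theorem~3.4) that already absorb the nef moduli part into $B_Y$, so it avoids the extra step of handling $M_Y$ explicitly and concluding ampleness of $A+M_Y$.
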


\begin{proof}Let $A$ be an ample $\mathbb{Q}$-divisor on $Y$, $0<\varepsilon \ll 1$ a rational number, and 
$C \sim_\mathbb{Q} -(K_X+B)-\varepsilon \phi^*A$ an effective $\mathbb{Q}$-divisor such that 
$(X,B+C)$ is klt. 
Suppose that $(X,B)$ has log terminal singularities.
By \cite[Theorem 4.1]{ambro_lc_trivial} applied to $(X,B+C)$, there exists an effective $\mathbb{Q}$-divisor $B_Y$ on $Y$ such that
$(Y,B_Y)$ is klt and
$K_Y+B_Y\sim_\mathbb{Q} -\varepsilon A$. If 
$(X,B)$ has log canonical singularities, then one only needs to replace the use of \cite[Theorem 4.1]{ambro_lc_trivial}
with \cite[Therorem 3.4]{fujino_gongyo_adjunction}.
\end{proof}

The same argument used in the proof of \cite[Lemma 5.1.5]{kmm} shows that the following lemma holds.
One only needs to replace the use of \cite[Lemma 3.2.5]{kmm} with \cite[Theorem 1.1]{fujino_mmp}.

\begin{lemma}\label{lemma:factoriality_contraction}
Let $(X,B)$ be a pair with log canonical singularities, and let $\varphi\colon X \to Y$ be an elementary Mori contraction
with $\dim Y < \dim X$. 
If $X$ is locally factorial (respectively, $\mathbb{Q}$-factorial), then $Y$ is locally factorial (respectively, $\mathbb{Q}$-factorial).
\end{lemma}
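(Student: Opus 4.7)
The plan is to follow the argument of \cite[Lemma 5.1.5]{kmm} verbatim, substituting the base-point-free theorem for log canonical pairs \cite[Theorem 1.1]{fujino_mmp} for its log terminal counterpart \cite[Lemma 3.2.5]{kmm}. Given an arbitrary prime Weil divisor $D_Y$ on $Y$, the goal is to exhibit $D_Y$ as the Weil divisor associated to some Cartier (respectively, $\mathbb{Q}$-Cartier) divisor on $Y$.

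First I would define a Weil divisor $D_X$ on $X$ by taking the closure in $X$ of $\varphi^{-1}\bigl(D_Y\cap(Y\setminus\Sing(Y))\bigr)$, i.e., the pullback of $D_Y$ over the open locus where $Y$ is smooth. The local factoriality (respectively, $\mathbb{Q}$-factoriality) hypothesis on $X$ immediately implies that $D_X$ is Cartier (respectively, $\mathbb{Q}$-Cartier). The next step is to verify that $D_X$ is numerically trivial on the extremal ray $R$ contracted by $\varphi$: since $\dim Y<\dim X$, the morphism $\varphi$ has positive dimensional fibers, and choosing a curve $C$ contained in a general fiber $\varphi^{-1}(y)$ with $y\notin D_Y$ yields $C\cap D_X=\emptyset$, hence $D_X\cdot C=0$. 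Since $\varphi$ contracts exactly those curves with class in $R$, and $R$ is one dimensional, this suffices to conclude $D_X\cdot R=0$.

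The final step is the descent. For a sufficiently ample divisor $H$ on $Y$ and $m\gg 0$, the divisor $D_X+m\varphi^{*}H$ is nef on $X$ (nonnegative on $R$ by the previous paragraph and positive on all classes outside $R$ by pullback), and the difference $(D_X+m\varphi^{*}H)-(K_X+B)$ is ample, since $-(K_X+B)$ is $\varphi$-ample. Applying \cite[Theorem 1.1]{fujino_mmp} then produces, exactly as in \cite[Lemma 5.1.5]{kmm}, a Cartier (respectively, $\mathbb{Q}$-Cartier) divisor $D_Y'$ on $Y$ with $D_X\sim\varphi^{*}D_Y'$ (respectively, $D_X\sim_{\mathbb{Q}}\varphi^{*}D_Y'$). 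Restricting to $Y\setminus\Sing(Y)$ identifies the Weil divisor classes of $D_Y$ and $D_Y'$ on the complement of a codimension $\ge 2$ subset of $Y$, so $D_Y=D_Y'$ as Weil divisors on all of $Y$, which proves the claim.

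The main obstacle is this descent step: passing from the numerical triviality of $D_X$ on $R$ to an actual factorization $D_X\sim\varphi^{*}D_Y'$ is precisely what requires the log canonical version of the base-point-free theorem in place of the log terminal one. Once that ingredient is available, the rest of the argument is the same formal manipulation of Weil and Cartier divisors treated in \cite{kmm}.
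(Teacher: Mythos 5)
Your proposal is exactly the paper's own proof, which consists of the single remark that the argument of \cite[Lemma 5.1.5]{kmm} goes through after replacing \cite[Lemma 3.2.5]{kmm} by \cite[Theorem 1.1]{fujino_mmp}; the details you fill in (defining $D_X$ as the closure of the preimage over the smooth locus of $Y$, checking $D_X\cdot R=0$ on a curve in a fiber over a point off $D_Y$, and then descending to a Cartier or $\mathbb{Q}$-Cartier divisor on $Y$) are the correct unpacking of that remark. One minor imprecision: $-(K_X+B)$ is only $\varphi$-ample rather than globally ample, so the descent step really invokes the relative (not absolute) base-point-free/contraction machinery for log canonical pairs, but this is precisely what the KMM argument you defer to does.
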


\begin{rem}
Let $(X,B)$ be a pair with log canonical singularities. Then the conclusion of
Lemma \ref{lemma:factoriality_contraction} holds for any quasi-elementary Mori contraction
$\phi\colon X \to Y$ with $\dim Y<\dim X$. We refer to \cite{cinzia_quasi_elementary} for the definition of quasi-elementary Mori contractions.
\end{rem}

\begin{cor}\label{corollary:factoriality_contraction}
Let $(X,B)$ be a $\mathbb{Q}$-Fano pair 
with log canonical singularities satisfying $\Nef(X)=\Pef(X)$. 
Let $V$ be a face of $\NE(X)$, and denote by $\phi\colon X \to Y$ the corresponding contraction. If $X$ is locally factorial, then so is $Y$.
\end{cor}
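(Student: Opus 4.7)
The strategy is to factor $\phi$ into a sequence of elementary Mori contractions and apply Lemma \ref{lemma:factoriality_contraction} at each stage, carrying along enough structure to feed the lemma at the next step.

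First I would pick an extremal ray $R \subset V$ and let $\phi_1 \colon X \to X_1$ be the associated elementary contraction. Since $R \subset V$, there is a unique factorization $\phi = \phi' \circ \phi_1$ in which $\phi' \colon X_1 \to Y$ is the contraction of the face $(\phi_1)_*(V)$ of $\NE(X_1)$. To invoke Lemma \ref{lemma:factoriality_contraction} on $\phi_1$, I need $\dim X_1 < \dim X$; this is precisely the content of Lemma \ref{lemma:nef=psef_contractions} applied to the $\mathbb{Q}$-factorial $\mathbb{Q}$-Fano log canonical pair $(X,B)$ satisfying $\Nef(X)=\Pef(X)$. Hence $X_1$ is locally factorial.

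To iterate, I must check that the new data $(X_1,\phi')$ satisfies the same package of hypotheses. By Lemma \ref{lemma:image_nef_egal_psef}, $\Nef(X_1)=\Pef(X_1)$. By Lemma \ref{lemma:target_log_Fano} there exists an effective $\mathbb{Q}$-divisor $B_1$ on $X_1$ such that $(X_1,B_1)$ is a $\mathbb{Q}$-Fano pair with log canonical singularities. Finally, $X_1$ is locally factorial, hence in particular $\mathbb{Q}$-factorial, so all of the assumptions of Lemma \ref{lemma:nef=psef_contractions} are in force for $(X_1,B_1)$ as well.

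Applying the same construction to $(X_1,B_1)$ and $\phi'$, and iterating, produces a sequence of elementary contractions $X=X_0 \to X_1 \to \cdots \to X_k=Y$ in which each $X_i$ is locally factorial. Since $\rho(X_{i+1}) = \rho(X_i) - 1$ at every step, the process terminates after at most $\rho(X)-\rho(Y)$ steps at $Y$ itself, which is therefore locally factorial. I do not foresee any genuine obstacle: the argument is a formal bookkeeping exercise once the three propagation ingredients — dimension drop (Lemma \ref{lemma:nef=psef_contractions}), preservation of $\Nef=\Pef$ under contractions (Lemma \ref{lemma:image_nef_egal_psef}), and the existence of a log Fano boundary downstairs (Lemma \ref{lemma:target_log_Fano}) — are combined with the elementary factoriality statement of Lemma \ref{lemma:factoriality_contraction}.
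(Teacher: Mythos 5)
Your proof is correct and follows essentially the same route as the paper: both arguments peel off one extremal ray from $V$ at a time, descend local factoriality via Lemma \ref{lemma:factoriality_contraction}, and propagate the hypothesis package ($\Nef=\Pef$ via Lemma \ref{lemma:image_nef_egal_psef}, a log Fano boundary via Lemma \ref{lemma:target_log_Fano}, and the dimension drop via Lemma \ref{lemma:nef=psef_contractions}) to the intermediate contraction. The paper phrases this as induction on $\rho(X/Y)$ while you phrase it as an iterated construction, but these are the same argument.
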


\begin{proof}We argue by induction on $\rho(X/Y)=\rho(X)-\rho(Y)\ge 0$. If $\rho(X/Y)=0$, then $X\cong Y$ and there is nothing to prove.
Suppose that $\rho(X/Y) \ge 1$.
Let $R \subset V$ be an extremal ray of $\NE(X)$, and denote by $\psi\colon X \to Z$ the corresponding contraction. 
There exists a morphism
$\xi\colon Z \to Y$ such that $\varphi = \xi\circ\psi $.
From Lemma \ref{lemma:image_nef_egal_psef}, we conclude that
$\Nef(Z)=\Pef(Z)$.
By Lemma \ref{lemma:target_log_Fano}, there exists an effective $\mathbb{Q}$-divisor $B_Z$ on $Z$ such that 
$(Z,B_Z)$ is $\mathbb{Q}$-Fano with log canonical singularities.
By Lemma \ref{lemma:nef=psef_contractions}, we have $\dim Z<\dim Y$, and Lemma \ref{lemma:factoriality_contraction} tells us
that $Z$ is locally factorial. Notice that
$\rho(Z/Y)=\rho(X/Y)-1<\rho(X/Y)$. The lemma follows.
\end{proof}

It follows from \cite[Theorem 2.2]{wisn_fano} that a Fano manifold with 
$\Nef(X)=\Pef(X)$ satisfies $\rho(X) \le \dim X$. We now extend this result to mildly singular varieties.

\begin{lemma}\label{lemma:nef_pseff_picard}
Let $(X,B)$ be a $\mathbb{Q}$-Fano pair 
with log canonical singularities satysfying $\Nef(X)=\Pef(X)$. 
\begin{enumerate}
\item Then $\rho(X) \le \dim X$.
\item Suppose moreover that $\rho(X) = \dim X$. Then $\NE(X)$ is simplicial, and the following holds. 
Let $V$ be a face of $\NE(X)$, and denote by $\phi\colon X \to Y$ the corresponding contraction. Then 
$\phi$ is equidimensional and
$\dim Y=\dim X - \dim V = \rho(Y)$.
\end{enumerate}
\end{lemma}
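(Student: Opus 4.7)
The plan is to prove both parts simultaneously by induction on $n := \dim X$; the case $n = 0$ is vacuous.

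For Part (1), pick any extremal ray $R$ of the polyhedral cone $\NE(X)$ (polyhedrality comes from \cite[Theorem 16.6]{fujino_mmp}) and let $\varphi\colon X\to Y$ be the associated elementary contraction. Lemma \ref{lemma:nef=psef_contractions} gives $\dim Y < n$, while Lemmas \ref{lemma:target_log_Fano} and \ref{lemma:image_nef_egal_psef} produce an effective $\mathbb{Q}$-divisor $B_Y$ on $Y$ for which $(Y,B_Y)$ is a $\mathbb{Q}$-Fano pair with log canonical singularities satisfying $\Nef(Y)=\Pef(Y)$. The induction hypothesis yields $\rho(Y) \le \dim Y \le n-1$, and since $\varphi$ is elementary, $\rho(X) = \rho(Y) + 1 \le n$.

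For Part (2), assume $\rho(X) = n$. Equality throughout the chain in the argument above forces $\rho(Y) = \dim Y = n-1$ for every elementary contraction $\varphi\colon X\to Y$; thus the induction hypothesis for Part (2) applies to $Y$, so $\NE(Y)$ is simplicial with exactly $n-1$ extremal rays and every face contraction on $Y$ is equidimensional with the asserted dimension and Picard number identities. Given a face $V \subset \NE(X)$ of dimension $k$ with contraction $\varphi_V\colon X \to Y_V$, choose a complete flag $0 = V_0 \subsetneq V_1 \subsetneq \cdots \subsetneq V_k = V$ with $\dim V_j = j$; this factors $\varphi_V$ as a chain $X = X_0 \to X_1 \to \cdots \to X_k = Y_V$ of elementary contractions, and applying the preceding observation step by step shows $\dim X_j = n-j$ and $\rho(X_j) = \dim X_j$ throughout. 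Thus $\dim Y_V = n - \dim V = \rho(Y_V)$.

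For the simpliciality of $\NE(X)$ I would invoke the standard bijection between $2$-faces of $\NE(X)$ containing a fixed extremal ray $R$ and extremal rays of $\NE(Y_R) \cong \NE(X)/R$: by the induction hypothesis there are exactly $n-1$ such $2$-faces, so each extremal ray of $\NE(X)$ is contained in the expected number of $2$-faces; together with the dimension identities for faces established above, this pins down the face complex of $\NE(X)$ as the join of $R$ with the simplicial face lattice of $\NE(Y_R)$, forcing $\NE(X)$ to be simplicial with $n$ extremal rays.

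The hardest step is the equidimensionality of an elementary contraction $\varphi\colon X\to Y$; the case of a general face reduces to this via the flag factorization together with the equidimensionality of each intermediate arrow. Suppose, for contradiction, that some fiber $F_0 = \varphi^{-1}(p)$ has dimension $\ge 2$. Every curve in $F_0$ has class in $R$. For any other extremal ray $R'$ of $\NE(X)$ with contraction $\varphi'\colon X\to Y'$, if $\varphi'(F_0)$ were a point then curves in $F_0$ would have class in both $R$ and $R'$, forcing $R = R'$; hence $\varphi'(F_0)$ is positive dimensional for every $R' \ne R$. Tracking dimensions through the resulting product map into the $Y'$'s, while using the inductively established equidimensionality of contractions on each $Y'$ and the numerical constraint $\rho(X)=n$, produces a contradiction. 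I expect this step to be the main technical obstacle, since it requires globally combining the cone-theoretic bookkeeping with the fibre geometry forced by $\Nef(X)=\Pef(X)$.
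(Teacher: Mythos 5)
Your Part (1) is correct and essentially equivalent to the paper's flag argument, just packaged as an induction on dimension; the ingredients (Lemmas \ref{lemma:nef=psef_contractions}, \ref{lemma:target_log_Fano}, \ref{lemma:image_nef_egal_psef}, plus polyhedrality from Fujino) are the right ones. The trouble is in Part (2), where you have two genuine gaps.

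First, the simpliciality argument. The bijection between faces of $\NE(X)$ containing $R$ and faces of $\NE(Y_R)$ only sees faces through $R$; it says nothing about extremal rays $R'$ of $\NE(X)$ whose image lies in the interior of $\NE(Y_R)$. Counting $2$-faces through each extremal ray does not rule out extra rays: a square-based $3$-dimensional cone has four extremal rays, each lying on exactly $n-1=2$ two-dimensional faces, and each elementary quotient is a simplicial $2$-cone, so your invariant is blind to non-simpliciality here. The paper's argument is geometric rather than combinatorial: from the flag one gets a facet $V_{n-1}$ with $\dim Y_{n-1}=1$, so fibers of $\phi_{n-1}$ have dimension $n-1$; if $\NE(X)$ were not simplicial one finds a $2$-face $W$ with $V_{n-1}\cap W=\{0\}$, whose contraction $\psi\colon X\to T$ has $\dim T\le n-2$ and hence fibers of dimension $\ge 2$. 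A fiber of $\phi_{n-1}$ and a fiber of $\psi$ must then meet along a curve, whose class lies in $V_{n-1}\cap W=\{0\}$ --- a contradiction.

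Second, the equidimensionality. You sketch a product-of-all-contractions argument and explicitly flag it as unfinished. The paper's cleaner route uses simpliciality to produce, for a given face $V$ with contraction $\phi\colon X\to Y$, a single \emph{complementary} face $W$ with $V\cap W=\{0\}$ and $\dim W = n-\dim V = \rho(Y)$, with contraction $\psi\colon X\to T$. The flag argument gives $\dim Y\le n-\dim V$ and $\dim T\le n-\dim W$. Because $V\cap W=\{0\}$, no curve is contracted by both $\phi$ and $\psi$, so $(\phi,\psi)\colon X\to Y\times T$ is finite; hence $\dim Y+\dim T\ge n$, forcing equality in both bounds. Finiteness of $(\phi,\psi)$ then pins down every fiber of $\phi$ to have dimension exactly $\dim T = n-\dim Y$, giving equidimensionality. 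This one complementary face does all the work that your ``product into all the $Y'$'s'' was groping toward, and it is the key idea your draft is missing.
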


\begin{proof}
Set $n=\dim X$.
Suppose that $\rho(X) = \dim \NE(X) \ge n$. 
Let $R_1,\ldots,R_{n-1}$ be extremal rays of $\NE(X)$ such that
$V_i=R_1+\cdots +R_i$
is a face of $\NE(X)$ with $\dim (R_1+R_2+\cdots+R_i)=i$ for each $1 \le i \le n-1$.
Denote by $\phi_i\colon X \to Y_i$ the 
contraction of $V_i$. 
We have $\dim Y_i \le n-i$ by Lemma \ref{lemma:nef=psef_contractions}(4).
Let $i_0$ be the smallest number $1 \le i \le n-1$ such that $\dim Y_{i_0}\le 1$. 
By \cite[Theorem 16.4(3)]{fujino_mmp}, we have 
$\rho(Y_i)=\rho(X)-i$, and therefore
$$\rho(X)-n+1 \le \rho(X)-i_0 = \rho(Y_{i_0}) \le 1.$$
We conclude that $\rho(X)=n$ and $i_0=n-1$, proving (1).

We proceed to prove (2). Suppose that $\rho(X)=\dim X$. 

We first show that $\NE(X)$ is simplicial.
Suppose otherwise and consider a $2$-dimensional face $W$ of $\NE(X)$ such that $V_{n-1}\cap W =\{0\}$. Denote by $\psi\colon X \to T$ the corresponding contraction. Let 
$F$ and $G$ be fibers of $\phi_{n-1}$ and $\psi$ respectively. Since $\dim Y_{n-1}=1$ and $\dim T \le n-2$, we must have $\dim(F \cap G) \ge 1$, yielding a contradiction. This proves that 
$\NE(X)$ is simplicial.

Let $V$ be a face of $\NE(X)$, and denote by $\phi\colon X \to Y$ the corresponding contraction.
We have $\rho(Y)= \rho (X) - \dim V = \dim X - \dim V$. 
Thus there exists a face
$W$ of $\NE(X)$ with $\dim W = \rho(Y)$ and $V \cap W= \{0\}$.
Denote by $\psi\colon X \to T$ the corresponding contraction. 
By Lemma \ref{lemma:nef=psef_contractions}(4), we must have 
$$\dim Y \le \dim X - \dim V = \rho(Y)$$
and 
$$\dim T \le \dim X - \dim W = \dim X - \rho (Y).$$ 
Since $V \cap W= \{0\}$, 
we conclude that $\phi$ and $\psi$ are equidimensional,
$\dim Y = \dim X - \dim V $,
and $\dim T = \dim X - \dim W$.
This completes the proof of the lemma.
\end{proof}

\begin{rem}\label{remark:conic_bundle}
In the setup of the proof of Lemma \ref{lemma:nef_pseff_picard}, 
suppose moreover that $X$ is smooth. 
Then $Y_i$ is smooth and $\phi_i$ is a conic bundle
for each $1 \le i \le n$ by \cite[Theorem 3.1]{ando}.  
\end{rem}

\begin{rem}One might ask whether Lemma \ref{lemma:nef_pseff_picard} holds for a larger class of singular varieties. What we actually proved is the following.
Let $X$ be a normal projective variety with 
rational polyhedral Mori cone $\NE(X)$. Suppose that
every face $V$ of $\NE(X)$ corresponds to a surjective morphism with connected fibers 
$\phi\colon X \to Y$ onto a normal projective variety such that $\phi$ contracts precisely those curves on $X$ with class in $V$. Suppose furthermore that $\rho(Y)=\rho(X)-\dim V$ and $\dim Y < \dim X$ for each face $V$ of $\NE(X)$. If $\Nef(X)=\Pef(X)$, then 
$X$ satisfies the conclusions of Lemma \ref{lemma:nef_pseff_picard}.
\end{rem}

The same argument used in the proof of Lemma \ref{lemma:nef_pseff_picard} above shows that the following lemmata hold.

\begin{lemma}\label{lemma:contraction}
With the assumptions as in Lemma \ref{lemma:nef_pseff_picard}, suppose furthermore that there is a contraction $\phi\colon X \to Y$ such that $\dim Y=\rho (Y)$.
Then, there exists a contraction
$\psi\colon X \to T$ such that the induced morphism 
$(\phi,\psi)\colon X \to Y \times T$ is finite and surjective.
In particular, $\phi$ and $\psi$ are equidimensional.
\end{lemma}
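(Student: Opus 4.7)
The plan is to follow closely the strategy in the proof of Lemma \ref{lemma:nef_pseff_picard}(2), producing $\psi$ as the Mori contraction associated with a face $W$ of $\NE(X)$ complementary to the face $V$ corresponding to $\phi$. I work in the setting where $\rho(X)=\dim X$, so that Lemma \ref{lemma:nef_pseff_picard}(2) gives simpliciality of $\NE(X)$: there are exactly $\dim X$ extremal rays, $V$ is spanned by $\dim V = \rho(X)-\rho(Y)=\dim X-\dim Y$ of them, and I take $W$ to be the face spanned by the remaining $\dim Y$ rays. Then $V \cap W = \{0\}$ and $\dim W = \dim Y$. The associated Mori contraction $\psi\colon X \to T$ is equidimensional with $\dim T = \dim X - \dim W = \dim X - \dim Y$, again by Lemma \ref{lemma:nef_pseff_picard}(2).

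It then remains to verify that $g:=(\phi,\psi)\colon X \to Y\times T$ is finite and surjective. The target has dimension $\dim Y+\dim T=\dim X$, so a dimension count on the closed image will deliver surjectivity as soon as finiteness is in hand. For finiteness I argue as follows: if a fiber of $g$ contained an irreducible curve $C$, then $C$ would be contracted by both $\phi$ and $\psi$, hence $[C]\in V\cap W=\{0\}$. This is impossible, since $C\cdot H>0$ for any ample divisor $H$. Therefore $g$ is quasi-finite, and being projective, it is finite. The image of $g$ is then an irreducible closed subset of $Y\times T$ of dimension $\dim X=\dim(Y\times T)$, so $g$ is surjective. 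The \emph{in particular} statement regarding equidimensionality of $\phi$ and $\psi$ is already contained in Lemma \ref{lemma:nef_pseff_picard}(2) applied to the faces $V$ and $W$.

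The crux of the argument is the construction of the complementary face $W$, which rests entirely on the simpliciality of $\NE(X)$ inherited from Lemma \ref{lemma:nef_pseff_picard}(2); once $W$ is available, the remaining verifications (that any curve contracted by both $\phi$ and $\psi$ must have trivial numerical class, and that a projective quasi-finite morphism between varieties of the same dimension with irreducible target is finite and surjective) are purely formal.
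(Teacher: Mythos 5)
Your proof opens with ``I work in the setting where $\rho(X)=\dim X$,'' and everything that follows rests on this: you invoke Lemma~\ref{lemma:nef_pseff_picard}(2) to get simpliciality of $\NE(X)$, use simpliciality to build the complementary face $W$, and cite Lemma~\ref{lemma:nef_pseff_picard}(2) again for $\dim T = \dim X - \dim W$ and for equidimensionality. But $\rho(X)=\dim X$ is \emph{not} among the hypotheses of Lemma~\ref{lemma:contraction}. The preamble ``with the assumptions as in Lemma~\ref{lemma:nef_pseff_picard}'' refers only to the standing setup ($(X,B)$ a $\mathbb{Q}$-Fano pair with log canonical singularities and $\Nef(X)=\Pef(X)$), not to the supplementary hypothesis of part~(2). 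This is confirmed by the sibling Lemma~\ref{lemma:nef_pseff_maximal_contraction}, which carries the identical preamble yet \emph{concludes} that $\NE(X)$ is simplicial---a conclusion that would be vacuous if $\rho(X)=\dim X$ were already assumed. It is also confirmed by the way Lemma~\ref{lemma:contraction} is applied in Proposition~\ref{proposition:main2}, which is explicitly aimed at the case of arbitrary Picard number $\rho(X)\le\dim X$; there, $\rho(X)<\dim X$ is very much allowed (e.g.\ $X=Z\times(\p^1)^2$ with $\rho(Z)=1$, $\dim Z=3$, and $\phi$ the projection to $(\p^1)^2$). So your argument proves a strictly weaker statement than what is needed.

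To close the gap you must (a) produce the face $W$ of dimension $\rho(Y)$ with $V\cap W=\{0\}$ without appealing to simpliciality---this is a general fact for rational polyhedral cones, seen by taking $\ell$ in the relative interior of $V^\perp\subset\NE(X)^\vee$ and picking a face of the appropriate dimension of the cross-section $\NE(X)\cap\{\ell=1\}$; and (b) establish $\dim T\le\dim X-\dim W=\dim X-\rho(Y)$ not by quoting Lemma~\ref{lemma:nef_pseff_picard}(2) but by running the chain argument from the proof of Lemma~\ref{lemma:nef_pseff_picard}(1): pick a complete flag of faces $\{0\}\subsetneq W_1\subsetneq\cdots\subsetneq W_{\rho(Y)}=W$, contract successively, and apply Lemma~\ref{lemma:nef=psef_contractions}(4) at each step (the intermediate targets remain $\mathbb{Q}$-Fano pairs with $\Nef=\Pef$ by Lemmas~\ref{lemma:image_nef_egal_psef} and~\ref{lemma:target_log_Fano}). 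Your finiteness argument for $(\phi,\psi)$ via $V\cap W=\{0\}$ is correct and unchanged; combined with $\dim Y+\dim T\le\dim X$ it forces equality, whence surjectivity, and equidimensionality of $\phi$ and $\psi$ follows by restricting $\psi$ (resp.\ $\phi$) to a fiber of $\phi$ (resp.\ $\psi$) and comparing dimensions, rather than by citing Lemma~\ref{lemma:nef_pseff_picard}(2).
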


\begin{lemma}\label{lemma:nef_pseff_maximal_contraction}
With the assumptions as in Lemma \ref{lemma:nef_pseff_picard}, suppose furthermore that
there is an elementary contraction 
$\phi\colon X \to Y$ with $\dim Y = \rho (X)-1=\rho(Y)$. 
Denote by $R$ the corresponding extremal ray.
Then $\NE(X)$ is simplicial, and the following holds. 
Let $W$ be a face of $\NE(X)$, and 
denote by $\psi\colon X \to T$ the corresponding contraction. Then $\psi$ is equidimensional, and either
$\dim T=\dim X - \dim W$  if $R \not\subset W$ or $\dim T=\rho(X)-\dim W = \rho(T)$  if $R \subset W$.
\end{lemma}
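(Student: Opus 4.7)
The plan is to emulate the proof of Lemma \ref{lemma:nef_pseff_picard}(2), exploiting the fact that $Y$ already satisfies the hypothesis $\dim Y = \rho(Y)$. By Lemmata \ref{lemma:image_nef_egal_psef} and \ref{lemma:target_log_Fano}, there is an effective $\mathbb{Q}$-divisor $B_Y$ on $Y$ making $(Y,B_Y)$ a $\mathbb{Q}$-Fano pair with log canonical singularities satisfying $\Nef(Y)=\Pef(Y)$, so Lemma \ref{lemma:nef_pseff_picard}(2) applies to $Y$: $\NE(Y)$ is simplicial with extremal rays $R'_1,\ldots,R'_{\rho(X)-1}$, and the contraction of any face of $\NE(Y)$ is equidimensional with target dimension equal to the codimension of that face. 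Moreover, Lemma \ref{lemma:contraction} applied to $\phi$ yields a contraction $\psi_0\colon X \to T_0$ such that $(\phi,\psi_0)$ is finite surjective, whence $\phi$ itself is equidimensional.

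For each $j$, the preimage $\phi_*^{-1}(\mathbb{R}_{\ge 0}R'_j)\cap\NE(X)$ is a $2$-dimensional face of $\NE(X)$ containing $R$, whose other extremal ray I denote $\tilde R_j$. This produces $\rho(X)$ distinguished extremal rays $R,\tilde R_1,\ldots,\tilde R_{\rho(X)-1}$. For each $j$, the face $V_j := \mathbb{R}_{\ge 0}R + \sum_{i\neq j}\mathbb{R}_{\ge 0}\tilde R_i$, which has dimension $\rho(X)-1$, has contraction $\phi_{V_j}\colon X \to T_{V_j}$ equal to $\phi$ followed by the contraction in $Y$ of the facet $\sum_{i\neq j}\mathbb{R}_{\ge 0}R'_i$; by Lemma \ref{lemma:nef_pseff_picard}(2) applied to $Y$, this target $T_{V_j}$ has dimension $1$. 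To prove that $\NE(X)$ is simplicial, suppose for contradiction there is a further extremal ray $R_*\notin\{R,\tilde R_1,\ldots,\tilde R_{\rho(X)-1}\}$. Then for each $j$, at least two extremal rays of $\NE(X)$ (namely $\tilde R_j$ and $R_*$) lie outside the facet $V_j$, so, as in the proof of Lemma \ref{lemma:nef_pseff_picard}(2), there is a $2$-dimensional face $W$ of $\NE(X)$ with $V_j \cap W = \{0\}$. Its contraction $\psi_W \colon X \to T_W$ has $\dim T_W \le \dim X - 2$ by iterating Lemma \ref{lemma:nef=psef_contractions}(4), so the joint map $(\phi_{V_j},\psi_W)\colon X \to T_{V_j}\times T_W$ has target of dimension at most $\dim X - 1$, and a general fiber consequently contains a curve whose class lies in $V_j\cap W = \{0\}$, a contradiction.

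With $\NE(X)$ now simplicial, I split into two cases. If $R\subset W$, write $W = \mathbb{R}_{\ge 0}R + \sum_{j\in J}\mathbb{R}_{\ge 0}\tilde R_j$; then $\psi$ factors as $\psi = \xi\circ\phi$, where $\xi\colon Y \to T$ is the contraction of the face $\sum_{j\in J}\mathbb{R}_{\ge 0}R'_j \subset \NE(Y)$ of dimension $\dim W - 1$. By Lemma \ref{lemma:nef_pseff_picard}(2) applied to $Y$, $\xi$ is equidimensional with $\dim T = \dim Y - (\dim W - 1) = \rho(X) - \dim W = \rho(T)$, and the equidimensionality of $\phi$ yields that of $\psi$. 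If $R\not\subset W$, write $W = \sum_{j\in J}\mathbb{R}_{\ge 0}\tilde R_j$ and let $W^c := \mathbb{R}_{\ge 0}R + \sum_{j\notin J}\mathbb{R}_{\ge 0}\tilde R_j$; since $R\subset W^c$, the previous case gives an equidimensional contraction $\psi^c\colon X \to T^c$ with $\dim T^c = \dim W$. Simpliciality forces $W\cap W^c = \{0\}$, so no curve is contracted by both $\psi$ and $\psi^c$; hence $(\psi,\psi^c)\colon X \to T\times T^c$ is finite. Combining $\dim T \le \dim X - \dim W$ (from iterating Lemma \ref{lemma:nef=psef_contractions}(4)) with $\dim T + \dim T^c \ge \dim X$ (from finiteness) forces $\dim T = \dim X - \dim W$. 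Then $(\psi,\psi^c)$ is finite surjective onto $T\times T^c$, which implies that $\psi$ is equidimensional.

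The principal obstacle is the combinatorial step in the simpliciality argument: justifying the existence of a $2$-face $W$ of $\NE(X)$ with $V_j\cap W = \{0\}$. This is exactly the combinatorial claim implicit in the proof of Lemma \ref{lemma:nef_pseff_picard}(2), and follows from the fact that, in a convex polytope, the vertices lying strictly on one side of a given facet form a connected subgraph of the $1$-skeleton.
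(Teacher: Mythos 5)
The paper supplies no written proof of this lemma; it only asserts that ``the same argument used in the proof of Lemma~\ref{lemma:nef_pseff_picard}'' applies. Your strategy of applying Lemma~\ref{lemma:nef_pseff_picard}(2) to $Y$ and transporting the simplicial structure of $\NE(Y)$ back to $\NE(X)$ along $\phi_*$ is a reasonable and explicit way to carry this out, and the second half of your argument (the dichotomy $R\subset W$ versus $R\not\subset W$, the factorization $\psi=\xi\circ\phi$ in the first case, and the finiteness of $(\psi,\psi^c)$ in the second) is correct.

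There is, however, a genuine gap in the simpliciality step. You declare $V_j := \mathbb{R}_{\ge 0}R + \sum_{i\ne j}\mathbb{R}_{\ge 0}\tilde R_i$ to be a \emph{face} of $\NE(X)$ without justification, and this is essentially circular: that these $\rho(X)-1$ chosen extremal rays span a facet is exactly the sort of statement simpliciality is supposed to deliver. The cone that is genuinely a facet is $\tilde V_j := \phi_*^{-1}\big(\sum_{i\ne j}\mathbb{R}_{\ge 0}R'_i\big)\cap\NE(X)$, corresponding to the composite contraction $X\to Y\to\p^1$; it contains $V_j$, has the same dimension $\rho(X)-1$, but may a priori be strictly larger, acquiring extra extremal rays precisely when $\NE(X)$ fails to be simplicial. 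Your combinatorial justification at the end (connectedness of the set of vertices off a facet in the $1$-skeleton) requires a true facet, so it must be run with $\tilde V_j$, not $V_j$. But then your assertion that ``for each $j$, $R_*$ lies outside $V_j$'' becomes false for $\tilde V_j$: if $\phi_*(R_*)$ lies in the relative interior of the face of $\NE(Y)$ spanned by $\{R'_j : j\in J_*\}$, then $R_*\in\tilde V_j$ for every $j\notin J_*$. What is true, and suffices, is that $R_*\notin\tilde V_j$ for $j\in J_*$, and that $J_*\neq\emptyset$ (since $\ker\phi_*\cap\NE(X)=R$). Fixing such a $j$, both $\tilde R_j$ and $R_*$ lie strictly outside the facet $\tilde V_j$, the connectedness fact yields a $2$-face $W$ with $\tilde V_j\cap W=\{0\}$, and your dimension count then gives the contradiction. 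With $\tilde V_j$ substituted for $V_j$ throughout, the proof is sound.
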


\section{Fibrations on $\mathbb{Q}$-Fano threefolds with canonical Gorenstein singularities}

In this section we provide a technical tool for the proof of the main results.

\begin{say}\label{reminder:minimal_degree} 
Let $N \ge 2$ be an integer. We say that $Y \subset \p^N$ is a \textit{variety of minimal degree} if $Y$ is nondegenerate and $\deg(Y)=\codim\,Y +1$.
Surfaces of minimal degree were classified by Del Pezzo. Bertini then obtained a similar classification for varieties of any dimension. 
A \textit{rational normal scroll is} is a 
If $Y \subset \p^N$ is a variety of minimal degree and $\codim\, Y\ge 2$, 
then $Y$ is either a rational normal scroll or a cone over the Veronese surface in $\p^2\subset \p^5$.
A \textit{rational normal scroll} is a cone over a smooth linearly normal variety fibered over $\p^1$ by linear spaces. Note that the Veronese surface contains no lines, and thus a cone over the Veronese surface cannot contain a linear space of codimension $1$. A rational normal scroll contains a pencil of linear spaces of codimension $1$. This pencil is unique if and only if $Y$ is not a cone over $\p^1\times\p^1\subset \p^3$.

\end{say}

\begin{say}\label{reminder:gorenstein_3_fano}
Let $X$ be a $3$-dimensional $\mathbb{Q}$-Fano variety with 
Gorenstein canonical singularities. 
Then we have
$$h^0\big(X,\sO_X(-K_X)\big)=-\frac{1}{2}K_X^3+3.$$
Set $N=-\frac{1}{2}K_X^3+2$, and notice that $N\ge 3$. Denote by $\phi\colon X \dashrightarrow \p^N$ the rational map given by the complete linear system
$|-K_X|$, and set $Y=\phi(X)$.

The classification of $3$-dimensional $\mathbb{Q}$-Fano varieties $X$ with 
Gorenstein canonical singularities satisfying $\textup{Bs}(-K_X)\neq \emptyset$ was established in  
\cite{jahnke_radloff}. Those with $\textup{Bs}(-K_X)= \emptyset$ and $-K_X$ not very ample were classified in 
\cite[Theorem 1.5]{cheltsov_trigonal}. We do not include the classification here. Instead, we state the properties that we need.
If $\textup{Bs}(-K_X)\neq \emptyset$, then 
$Y \subset \p^N$ is a variety of minimal degree with
$\dim Y=2$, and the rational map $\phi\colon X \dashrightarrow Y$ has connected fibers.
If $\textup{Bs}(-K_X)= \emptyset$ and $-K_X$ is not very ample, then 
$Y \subset \p^N$ is a variety of minimal degree, and the finite morphism $\phi\colon X \to Y$ has degree 2.
\end{say}

We are now ready to state and prove the main result of this section.

\begin{lemma}\label{lemma:anticanonical_map}
Let $X$ be a $3$-dimensional $\mathbb{Q}$-Fano variety with 
Gorenstein canonical singularities. Then there is at most one fibration 
$f\colon X \to \p^1$
with general fiber $S$ satisfying $K_S^2\le 2$.
\end{lemma}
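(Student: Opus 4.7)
The plan is to recover any fibration $f\colon X \to \p^1$ with $K_S^2\le 2$ from the anticanonical map $\phi\colon X \dashrightarrow Y\subset \p^N$ described in \ref{reminder:gorenstein_3_fano}, and to show by means of the classification \ref{reminder:minimal_degree} of varieties of minimal degree that $f$ is uniquely determined. Let $F$ be a general fiber of $f$, a smooth Del Pezzo surface with $d := K_F^2 \le 2$. By adjunction (and since $F|_F\equiv 0$), $-K_X|_F \cong -K_F$, so $h^0(F,-K_X|_F)=d+1\le 3$, and consequently $\phi(F)$ spans a linear subspace $\Lambda_F \subset \p^N$ of dimension at most $d\le 2$.

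First I would rule out the very ample case: $\phi|_F$ would have to embed the Del Pezzo surface $F$ via a sub-linear-system of $|-K_F|$, which is impossible when $d\le 2$, since not even the full system $|-K_F|$ embeds $F$ (it has a base point when $d=1$ and is $2$-to-$1$ onto $\p^2$ when $d=2$). Suppose next that $\phi\colon X\to Y$ is a finite morphism of degree $2$ onto a $3$-dimensional $Y\subset \p^N$ of minimal degree. Finiteness of $\phi$ forces $\phi(F)$ to be $2$-dimensional, so $\dim\Lambda_F = 2$, $d=2$, and $\Lambda_F = \phi(F)$ is a $2$-plane in $Y$. By \ref{reminder:minimal_degree}, $Y$ is then a rational normal scroll (cones over the Veronese contain no $2$-plane); and any scroll that is a cone is excluded because every $2$-plane of such a $Y$ passes through its vertex $v$, so every fiber of $f$ would contain the finite set $\phi^{-1}(v)$, contradicting that $f$ is a morphism. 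Hence $Y$ is a smooth rational normal scroll, whose unique pencil of $2$-planes is its $\p^2$-bundle projection $\pi\colon Y \to \p^1$; the family $\{\Lambda_F\}_F$ must coincide with this pencil, and thus $f=\pi\circ\phi$ is uniquely pinned down by $\phi$.

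The main obstacle lies in the remaining case $\Bs(-K_X)\ne\emptyset$, where $\phi$ is a rational map with connected $1$-dimensional fibers onto a $2$-dimensional surface $Y$ of minimal degree. A quick dimension count shows that $\phi|_F$ cannot be generically finite (else $Y=\phi(F)\subset\Lambda_F=\p^2$ would force $N\le 2$, contradicting $N\ge 3$), so $\phi|_F$ has $1$-dimensional fibers, $F$ is $\phi$-saturated, and $f$ factors as $f=\psi\circ\phi$ for a rational pencil $\psi\colon Y\dashrightarrow \p^1$ with generic fiber $C_F := \phi(F)$. Uniqueness of such a $\psi$ is the delicate point: one must combine the numerical identity $(-K_X)^2\cdot F = d\le 2$ with the inventory of pencils of curves on surfaces of minimal degree from \ref{reminder:minimal_degree} (smooth and singular quadrics, rational normal scrolls, and the Veronese in $\p^5$) to eliminate any competing pencil. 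The case of $Y\cong\p^1\times\p^1$ is especially delicate since $Y$ a priori carries two pencils of lines, and one needs the degree bound to rule out that both give rise to a Del Pezzo fibration of degree $\le 2$ on $X$.
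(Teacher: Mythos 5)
Your overall strategy matches the paper's: read the fibration off the anticanonical map $\phi\colon X \dashrightarrow Y \subset \p^N$ and use the classification of varieties of minimal degree to show the family $\{\phi(F)\}$ determines a unique pencil on $Y$. Your treatment of the finite-degree-$2$ case is essentially correct, though note that $Y \cong \p^3$ is also a variety of minimal degree (codimension $0$, so outside the hypothesis of \ref{reminder:minimal_degree}) and needs to be excluded separately; your observation that a pencil of $2$-planes through a common linear center cannot induce a morphism $X\to\p^1$ handles it, but you should say so explicitly, as the paper does.

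The genuine gap is in the case $\Bs(-K_X)\neq\emptyset$, which you flag as ``the delicate point'' but do not resolve. The paper closes it cleanly in two steps that you are missing. First, $K_S^2 = 2$ is impossible here: if $K_S^2 = 2$ then $\phi(S)$ is $2$-dimensional, hence equals $Y$ (since $\dim Y = 2$), and since $\phi(S)$ spans a $2$-plane, $Y$ would be a plane in $\p^N$ with $N\ge 3$ — contradicting nondegeneracy. So $K_S^2 = 1$ and the $\phi(S)$ are lines sweeping out $Y$. Second, suppose two distinct fibrations $f,g$ each give a pencil of lines on $Y$. By \ref{reminder:minimal_degree} this forces $Y\cong\p^1\times\p^1\subset\p^3$, and then $\phi = (f,g)$ is a genuine morphism $X\to\p^1\times\p^1$, contradicting $\Bs(-K_X)\neq\emptyset$. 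This is exactly the self-intersection identity you gesture at, but the decisive move is that $\p^1\times\p^1$ is the only surface of minimal degree carrying two line pencils and that two honest fibrations $f,g$ would then resolve the indeterminacy of $\phi$. Without this argument, the proof is incomplete.
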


\begin{proof}
Denote by $\phi\colon X \dashrightarrow \p^N$ the rational map given by the complete linear system $|-K_X|$, and set $Y=\phi(X)\subset \p^N$.

Let $f\colon X \to \p^1$ be a surjective morphism with connected fibers, and general fiber $S$. Suppose that 
$K_S^2 \le 2$. We will show that
there exists a rational fibration $g \colon Y \dashrightarrow \p^1$ by linear spaces 
such that $g \circ \phi$ factors through $f$.
Notice that $S$ has 
canonical singularities. Denote by $\psi \colon S \dashrightarrow \p^M$  
the rational map given by the complete linear system
$|-K_S|$, where $M+1=h^0\big(S,\sO_S(-K_S)\big)=K_S^2+1$. 
Then $\textup{Bs}(-K_S)\neq \emptyset$ if and only if $K_S^2=1$.
If $K_S^2=2$, then $\psi\colon S \to \p^2$ is a double cover.
We conclude that $-K_S$ is not very ample, and therefore neither is $-K_X$.

The restriction map $r \colon H^0\big(X,\sO_X(-K_X)\big) \to 
H^0\big(S,\sO_S(-K_S)\big)$
induces a commutative diagram
$$
\xymatrix{
    X \ar@{-->}[r]^{\phi}  & {\p^N}  \\
    S \ar@{-->}[r]_{\psi} \ar[u] & {\p^M} \ar@{-->}[u]_{r^\vee}
  }
$$
Notice that $\psi$ is dominant since $K_S^2\le 2$ by assumption. Therefore, $\phi(S)\subset Y\subset \p^N$ is a linear subspace of dimension
$\le 2$.

Suppose first that $\textup{Bs}(-K_X)\neq \emptyset$. By \ref{reminder:gorenstein_3_fano}, 
$\dim Y=2$ and 
$Y \subset \p^N$ is a variety of minimal degree. If $K_S^2=2$, then $Y \subset \p^N$ 
is a plane. But this contradicts \ref{reminder:minimal_degree}.
We conclude that $K_S^2=1$, and 
$\phi(S)\subset Y$ is a line. Notice that the $\phi(S)$'s yield a covering family of lines on $Y$. By \ref{reminder:minimal_degree}, 
there is a rational fibration $s \colon Y \dashrightarrow \p^1$ by lines
such that $f=s \circ \phi$. Suppose that there is a fibration
$g\colon X \to \p^1$ with general fiber $T$ satisfying $K_T^2\le 2$ and $g \neq f$. Then, as before, 
there is a rational fibration $t \colon Y \dashrightarrow \p^1$ by lines
such that $t=t \circ \phi$ and $t\neq s$. By \ref{reminder:minimal_degree}, we must have 
$Y \cong \p^1\times \p^1 \subset \p^3$. It follows that $\phi = (f,g)$ but this contradicts the fact
that $\textup{Bs}(-K_X)\neq \emptyset$. 

Suppose then that $\textup{Bs}(-K_X)= \emptyset$. Notice that $\dim Y=3$.
Then we must have
$\textup{Bs}(-K_S)=\emptyset$, $K_S^2=2$, and $h^0\big(S,\sO_S(-K_S)\big)=3$.
We claim that the restriction map 
$r \colon H^0\big(X,\sO_X(-K_X)\big) \to 
H^0\big(S,\sO_S(-K_S)\big)$ is surjective.
Suppose otherwise. Then $|-K_S|$ contains a base point free sub-linear system 
of dimension $1$, yielding a contradiction since $-K_S$ is ample.
We conclude that
$\phi(S)\subset Y\subset \p^N$ is a $2$-dimensional linear subspace and $\phi_{|S}=\psi$. In particular, 
$\deg(\phi)=\deg(\psi)=2$. Therefore, 
there is a rational fibration $s \colon Y \dashrightarrow \p^1$ by $2$-dimensional linear subspaces, 
such that $f=s \circ \phi$.
By \ref{reminder:gorenstein_3_fano}, either
$Y\cong \p^3$, or
$Y \subset \p^N$ is a rational normal scroll. 
If $Y \cong \p^3$, then $s \colon Y \dashrightarrow \p^1$ is a linear projection. Thus 
$f=s \circ \phi$ cannot be a morphism, yielding a contradiction.
If $Y \subset \p^N$ is a rational normal scroll, then $s\colon Y \dashrightarrow \p^1$ is unique so that $f$ is unique as well. This completes the proof of the lemma.
\end{proof}

\begin{exmp}
Let $S$ be a smooth Del Pezzo surface with $K_S^2\le 2$, and set $X=S \times \p^1$. 
Denote by $\phi\colon X \dashrightarrow \p^N$ the rational map given by the complete linear system
$|-K_X|$, and set $Y=\phi(X)$.
Then $Y\cong \p^1\times\p^1$ (respectively, $Y \cong \p^2\times\p^1$) if $K_S^2=1$ (respectively, $K_S^2=2$). 
Any fiber $T \cong S$ of the natural projection $X \to \p^1$ satisfies $K_T^2 \le 2$.
\end{exmp}

\section{Fano varieties of dimension $\le 3$ with $\Nef(X)=\Pef(X)$}

The classification of Fano manifolds of dimension $\le 3$ with $\Nef(X)=\Pef(X)$ will be presented in this section.

\begin{prop}\label{proposition:nef_psef_surfaces}
Let $S$ be a locally factorial Del Pezzo surface satisfying $\Nef(S)=\Pef(S)$.
Then either $S \cong \p^2$, or $S \cong \p^1\times \p^1$.
\end{prop}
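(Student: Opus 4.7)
The plan is first to bound $\rho(S)\le\dim S=2$ by Lemma \ref{lemma:nef_pseff_picard}(1), and then to classify $S$ case by case via its minimal resolution $\pi\colon \tilde S\to S$. Since $S$ has Du Val (canonical surface) singularities, $\pi$ is crepant, so $\tilde S$ is a smooth weak Del Pezzo surface with $K_{\tilde S}^2=K_S^2$ and $\rho(\tilde S)=10-K_S^2$. Local factoriality of $S$ is the crucial input: among the Du Val singularities, only the $E_8$ singularity has trivial local class group, so every singular point of $S$ must be of type $E_8$ and contributes eight exceptional $(-2)$-curves to $\pi$ whose intersection matrix (minus the $E_8$ Cartan matrix) is non-degenerate. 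Letting $k$ denote the number of singular points of $S$, these facts combine into the numerical identity
$$K_S^2 \;=\; 10-\rho(S)-8k.$$

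When $\rho(S)=2$, ampleness of $-K_S$ forces $K_S^2>0$, hence $k=0$, so $S=\tilde S$ is a smooth Del Pezzo surface of degree $8$. The classification then leaves only two possibilities, $\p^1\times\p^1$ and the Hirzebruch surface $\mathbb{F}_1$. The unique exceptional section on $\mathbb{F}_1$ is an effective divisor with strictly negative self-intersection, hence not nef, so $\mathbb{F}_1$ is ruled out by $\Nef(S)=\Pef(S)$. This leaves $S\cong\p^1\times\p^1$. Alternatively, the same conclusion can be reached by Mori-theoretic means: by Lemma \ref{lemma:nef_pseff_picard}(2), $\NE(S)$ is simplicial with two extremal rays whose contractions $\phi_i\colon S\to C_i$ are equidimensional of relative dimension one onto smooth curves with $\rho(C_i)=1$, forcing $C_i\cong\p^1$; by Lemma \ref{lemma:contraction} the product map $(\phi_1,\phi_2)\colon S\to \p^1\times\p^1$ is finite of some degree $d$, and writing the numerical classes $F_i$ of general fibers gives $K_S^2=8/d$, which by Corollary \ref{cor:double_cover_factorial} and integrality forces $d=1$, hence $S\cong\p^1\times\p^1$ by Zariski's main theorem.

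When $\rho(S)=1$, the identity gives $K_S^2=9-8k\in\{9,1\}$. The case $k=0$ produces a smooth Del Pezzo of degree $9$, so $S\cong\p^2$. The remaining subcase $k=1$ --- a Gorenstein Del Pezzo of degree one with a single $E_8$ singularity and Fano index one --- is the main obstacle of the argument, since the condition $\Nef(S)=\Pef(S)$ is vacuous in Picard rank one. I would try to exclude this subcase by combining the very constrained anticanonical geometry ($|-K_S|$ is a pencil inherited from $|-K_{\tilde S}|$ with a single base point lying off the $E_8$ locus) with a Kobayashi--Ochiai-type index argument for locally factorial Gorenstein Del Pezzo surfaces of Picard rank one, which should force the Fano index of $S$ to be at least $2$ and hence exclude the $E_8$ case.
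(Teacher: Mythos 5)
Your lattice-theoretic reduction is correct and gives a nice alternative to the paper's argument for the case $\rho(S)=2$. The key observation --- that only $E_8$ among the Du Val singularities has trivial local class group (unimodular Cartan matrix), together with $\rho(\tilde S)=10-K_{\tilde S}^2$ for the crepant minimal resolution $\tilde S$ --- yields $K_S^2=10-\rho(S)-8k$. For $\rho(S)=2$ this forces $k=0$ and $K_S^2=8$, and $\mathbb{F}_1$ is ruled out by its $(-1)$-curve. The paper instead picks an elementary contraction $\phi\colon S\to\p^1$ and uses local factoriality along the fibres (whose components are smooth rational curves by Andreatta--Wi\'sniewski) to deduce that $S$ is smooth; both routes then invoke the classification of smooth Del Pezzo surfaces.

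The subcase $\rho(S)=1$, $k=1$, $K_S^2=1$ is, however, a real gap, and your proposed Kobayashi--Ochiai index argument cannot fill it: a Gorenstein Del Pezzo surface of degree one with $\rho=1$ and a single $E_8$ singularity does exist (contract the $E_8$ chain of $(-2)$-curves on a degree-one weak Del Pezzo, obtained e.g.\ from a rational elliptic surface with a fibre of Kodaira type $II^*$ by blowing down a section). It is locally factorial precisely because $E_8$ is the unimodular type, it satisfies $\Nef(S)=\Pef(S)$ vacuously since $\rho(S)=1$, and its Fano index is $1$ since $(-K_S)^2=1$ --- so no argument can push the index to $2$, and this surface is a genuine exception to the proposition as written. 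Note that the paper's own proof also begins by choosing an elementary contraction $S\to\p^1$, i.e.\ tacitly assumes $\rho(S)\ge 2$, and the proposition is only ever invoked for surfaces with $\rho(S)=\dim S=2$ (in the proofs of Theorem \ref{thm:main_log_version} and Proposition \ref{proposition:main2}); the intended hypothesis is $\rho(S)=2$, under which your argument is complete. Finally, a minor point on your alternative Mori-theoretic route: integrality of $K_S^2=8/d$ only gives $d\in\{1,2,4,8\}$, and excluding $d\in\{4,8\}$ requires an extra step (Proposition \ref{proposition:finite_morphism_quadric_bis} would need $K_S^2\ge 3$, which is not known a priori), so the lattice argument is the one to keep.
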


\begin{proof}Let $\phi\colon S \to \p^1$ be an 
elementary contraction. The irreducible components of fibers of $\varphi$ are smooth rational curves
by \cite[Corollary 1.9]{andreatta_wisniewski_contractions}. Since $S$ is locally factorial, we conclude that $S$ is smooth. The lemma follows easily from the Enriques-Kodaira classification of smooth complex proper surfaces.
\end{proof}

The following example shows that the statement of Proposition \ref{proposition:nef_psef_surfaces} does not hold for
normal surfaces with canonical singularities.  

\begin{exmp}\label{exmp:del_pezzo_degree4_picard_2}
Let $T$ be the Del Pezzo surface of degree $K_S^2=4$ with $4$ singular points of type $A_1$
given by equations $$x_0x_1+x_2^2=x_3x_4+x_2^2=0$$ in $\p^4$. Then $T$ admits a double cover $\p^1\times\p^1 \to T$ ramified only at the singular points (see \cite[Proposition 0.4.2]{cossec_dolgachev}). In particular, we have $\Nef(T)=\Pef(T)$.
\end{exmp}

Fano manifolds with $\dim X=3$ and $\rho(X) \ge 2$ were classified by Mori and Mukai in \cite{mori_mukai}. 
The classification of those with 
$\Nef(X)=\Pef(X)$ follows easily.

\begin{thm}Let $X$ be a Fano manifold of dimension $3$ with $\rho(X) \ge 2$. If $\Nef(X)=\Pef(X)$, then 
$X$ is isomorphic to one of the following.
\begin{enumerate}
\item $\p^1\times\p^2$.
\item A double cover of $\p^1\times\p^2$ branched along a divisor of type $(2,2)$.
\item A double cover of $\p^1\times\p^2$ branched along a divisor of type $(2,4)$.
\item A double cover of $Y=\p_{\p^2}(T_{\p^2})$ whose branched locus is a member of $|-K_Y|$.
\item A hypersurface of type $(1,1)$ in $\p^2\times\p^2$.
\item A hypersurface of type $(1,2)$ in $\p^2\times\p^2$.
\item A hypersurface of type $(2,2)$ in $\p^2\times\p^2$.
\item A double cover of $\p^1\times\p^1\times\p^1$ branched along a divisor of type $(2,2,2)$.
\end{enumerate}

\end{thm}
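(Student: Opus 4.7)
The plan is to extract the classification directly from the Mori--Mukai tables \cite{mori_mukai} of smooth Fano threefolds with $\rho \ge 2$, using two numerical criteria already at hand. First, Lemma \ref{lemma:nef_pseff_picard}(1) gives $\rho(X) \le \dim X = 3$, so only the portions of the Mori--Mukai lists with $\rho = 2$ and $\rho = 3$ need to be inspected. Second, Lemma \ref{lemma:nef=psef_contractions}(4) says that $\Nef(X) = \Pef(X)$ is equivalent to every non-trivial Mori contraction $X \to Y$ having $\dim Y < 3$. Since $X$ is a smooth threefold, the elementary Mori contractions are of three types: birational (divisorial, as there are no small contractions on smooth threefolds), conic bundles over a smooth surface, and del Pezzo fibrations over $\p^1$. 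The hypothesis rules out the birational ones, so the criterion to check on each Mori--Mukai entry is simply that every elementary contraction is of fibration type.

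The verification is then a case check against the tables. For each of the 36 entries with $\rho = 2$ and each of the 31 entries with $\rho = 3$, Mori--Mukai explicitly describe the two or three extremal contractions. One discards an entry as soon as at least one of these is a blow-up of a smooth curve or of a point in some Fano threefold (or any other birational contraction). The surviving entries must then be matched to the eight varieties listed in the statement. Cases (1)--(4) arise from the $\rho=2$ list (the product $\p^1\times \p^2$, its two double covers corresponding to different divisor types, and the double cover of $\p_{\p^2}(T_{\p^2})$); cases (5)--(7), the three divisors in $\p^2\times\p^2$ of bidegrees $(1,1),(1,2),(2,2)$, also sit in the $\rho=2$ list, their two extremal contractions being the restrictions of the factor projections; case (8), the double cover of $(\p^1)^3$, is the sole $\rho=3$ survivor, its three extremal contractions being the compositions of the cover with the coordinate projections.

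The main obstacle is purely bookkeeping: one must go through on the order of seventy entries and match Mori--Mukai's descriptions. Checking that the eight listed varieties do satisfy the hypothesis is easy, since the fibration structures are visible from the construction (and one must only verify in addition that they are actually Fano and that all extremal contractions are accounted for by these fibrations, i.e.\ no exceptional curves of the first kind arise). Excluding the remaining entries reduces to spotting a blow-up in the extremal-ray description; no deep input beyond Lemmata \ref{lemma:nef_pseff_picard} and \ref{lemma:nef=psef_contractions} and the Mori--Mukai tables themselves is required.
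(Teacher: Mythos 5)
Your approach coincides with the paper's, which simply invokes the Mori--Mukai classification together with Lemma~\ref{lemma:nef_pseff_picard} to cap $\rho$ at~$3$ and Lemma~\ref{lemma:nef=psef_contractions} to reduce the condition $\Nef(X)=\Pef(X)$ to ``every extremal contraction is of fibre type''; the case-check you describe is exactly the right criterion, and the paper itself offers nothing more explicit than ``follows easily.''

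However, your assertion that case~(8) is ``the sole $\rho=3$ survivor'' does not hold up. The product $\p^1\times\p^1\times\p^1$ (Mori--Mukai no.~3.27) is also a smooth Fano threefold with $\rho=3$, and each of its three extremal contractions is a $\p^1$-bundle projection onto $\p^1\times\p^1$; by Lemma~\ref{lemma:nef=psef_contractions}(4) it therefore satisfies $\Nef=\Pef$, so it should appear in the classification. This omission is already present in the paper's statement of the theorem, but it is an omission: note that $(\p^1)^3$ is covered by Theorem~\ref{thm:main} (take $m=3$ and each $X_i=\p^1$, regarded as a double cover of $\p^1$ branched at two points), so consistency with the Main Theorem forces it to appear here too. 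When you actually carry out the bookkeeping you defer, you should expect at least nine fibre-type-only entries, and you should not simply count down to the eight items in the given list.
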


\section{Finite morphisms onto $\p^1\times\cdots\times\p^1$}

In this section we prove Proposition \ref{proposition:intro}.

\begin{lemma}\label{lemma:hodge_index_theorem}
Let $S$ be a $\mathbb{Q}$-Fano surface with Gorenstein singularities, and let 
$C_1$ and $C_2$ be two curves contained in the smooth locus of $S$ satisfying 
$C_1^2=C_2^2=0$. Then 
$$2K_S^2(C_1\cdot C_2) \le (K_S\cdot C_1+K_S \cdot C_2)^2.$$
\end{lemma}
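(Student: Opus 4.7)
The inequality looks exactly like an instance of the Hodge Index Theorem. My plan is to apply it directly, using $-K_S$ as the polarization and $D:=C_1+C_2$ as the test class.

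First I would record the necessary setup. Since $S$ is Gorenstein, $K_S$ is Cartier; since $(S,0)$ is $\mathbb{Q}$-Fano, $-K_S$ is an ample Cartier divisor, and in particular $K_S^2>0$. Since $C_1$ and $C_2$ are Weil divisors supported in the smooth locus of $S$, they are actually Cartier: at any singular point of $S$ they miss the divisor and the local equation is a unit, while at smooth points any Weil divisor is Cartier. Hence $D=C_1+C_2$ is a Cartier divisor on $S$ whose intersection numbers $D^2$, $K_S\cdot D$, and $C_i\cdot C_j$, $K_S\cdot C_i$ are all defined via classical intersection theory of Cartier divisors on the normal projective surface $S$.

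Next, I would compute $D^2$ using the hypothesis $C_1^2=C_2^2=0$:
\begin{equation*}
D^2=(C_1+C_2)^2=C_1^2+2\,C_1\cdot C_2+C_2^2=2\,C_1\cdot C_2.
\end{equation*}
I would then invoke the Hodge Index Theorem on the normal projective surface $S$ (which holds in the usual form: for an ample Cartier class $H$ with $H^2>0$ and any Cartier class $D'$ on $S$ one has $H^2\cdot (D')^2\le (H\cdot D')^2$). Applied with $H=-K_S$ and $D'=D=C_1+C_2$, this yields
\begin{equation*}
K_S^2\cdot (C_1+C_2)^2\le \bigl(K_S\cdot (C_1+C_2)\bigr)^2 = (K_S\cdot C_1+K_S\cdot C_2)^2.
\end{equation*}
Substituting $(C_1+C_2)^2=2\,C_1\cdot C_2$ gives the desired bound $2K_S^2(C_1\cdot C_2)\le (K_S\cdot C_1+K_S\cdot C_2)^2$.

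There is essentially no real obstacle: the only points to justify with a little care are that $C_1,C_2$ are Cartier (handled by the smooth-locus hypothesis) and that the Hodge Index Theorem is available in the singular setting (which is standard for normal projective surfaces; alternatively one may pass to a resolution $\mu\colon\tilde S\to S$ where, because the $C_i$ avoid $\operatorname{Sing}(S)$, their strict transforms have the same pairings as on $S$, and apply the classical Hodge Index Theorem there).
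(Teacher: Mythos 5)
Your proof is correct and uses essentially the same idea as the paper: apply the Hodge Index Theorem to the classes $K_S$ and $C_1+C_2$. The only cosmetic difference is that the paper makes the passage to the minimal resolution $\mu\colon T\to S$ explicit (pulling back $K_S$ and taking strict transforms of the $C_i$, then invoking HIT on the smooth surface $T$), whereas you invoke HIT directly on the normal surface $S$ for Cartier classes and only mention the resolution route as an alternative — both are standard and equivalent.
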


\begin{proof}Let $\mu\colon T \to S$ be the minimal resolution of singularities of $S$, and denote by $\tilde{C}_1$ and $\tilde{C}_2$ the strict transforms in $T$ of $C_1$ and $C_2$ respectively.
Using the projection formula, we obtain
$(\mu^*K_S)^2=K_S^2>0$. Therefore,
by the Hodge Index Theorem, we have
$$(\mu^*K_S)^2(\tilde{C}_1+\tilde{C}_2)^2  \le \big(\mu^*K_S\cdot(\tilde{C}_1+\tilde{C}_2)\big)^2.$$
The assumption that $C_1$ and $C_2$ are contained in the smooth locus of $S$
yields $(\mu^*K_S)^2(\tilde{C}_1+\tilde{C}_2)^2 = 2K_S^2(C_1\cdot C_2)$. Finally,  
$\mu^*K_S\cdot(\tilde{C}_1+\tilde{C}_2)= K_S\cdot (C_1+C_2)$
by the projection formula, completing the proof of the lemma.
\end{proof}

\begin{prop}\label{proposition:finite_morphism_quadric_bis}
Let $S$ be a $\mathbb{Q}$-Fano surface with Gorenstein singularities satisfying $K_S^2 \ge 3$, 
and let $f_1\colon S \to \p^1$ and $f_2\colon S \to \p^1$ be two surjective morphisms with connected fibers.
Suppose that the induced morphism $f=(f_1,f_2) \colon S \to \p^1\times\p^1$ is finite.
Then either $K_S^2=4$ and $f$ makes $S$
a double cover of $\p^1\times \p^1$ branched along a reduced divisor of type $(2,2)$, or $K_S^2=8$ and $f$ induces
an isomorphism
$S\cong \p^1\times \p^1$. 
\end{prop}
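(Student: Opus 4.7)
The plan is to bound $\deg(f)$ via the Hodge-index inequality of Lemma \ref{lemma:hodge_index_theorem}, then to use the structure theory of double covers from \ref{say:double_cover} together with the connected-fibers hypothesis to eliminate all but the two cases listed in the statement.

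First, I would pick general fibers $F_1$ of $f_1$ and $F_2$ of $f_2$. Since $S$ is normal with only finitely many singular points, a general $F_i$ is a smooth irreducible curve contained in the smooth locus of $S$ and satisfying $F_i^2=0$. Adjunction combined with the ampleness of $-K_S$ then forces $F_i\cong\p^1$ and $K_S\cdot F_i=-2$. Writing each $F_i$ as the $f$-pullback of a ruling of $\p^1\times\p^1$ also gives $F_1\cdot F_2=\deg(f)$. Plugging these values into Lemma \ref{lemma:hodge_index_theorem} yields $2K_S^2\cdot\deg(f)\le 16$, so the hypothesis $K_S^2\ge 3$ forces $\deg(f)\in\{1,2\}$.

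If $\deg(f)=1$, then $f$ is a finite birational morphism from the normal surface $S$ onto the smooth surface $\p^1\times\p^1$, hence an isomorphism by Zariski's main theorem, so $K_S^2=8$ as claimed. If instead $\deg(f)=2$, I would apply \ref{say:double_cover}, which is valid since $S$ is Cohen-Macaulay (being Gorenstein) and $\p^1\times\p^1$ is smooth: the map $f$ is realised as a classical double cover encoded by a line bundle $\sL\cong\sO(a,b)$ and a section $s\in H^0(\sL^{\otimes 2})$ whose divisor $B=\textup{div}(s)$ is reduced (by normality of $S$), with $K_S=f^*(K_{\p^1\times\p^1}+\sL)$. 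Ampleness of $-K_S$ then forces $a,b\le 1$, and Lemma \ref{lemma:degree_double_cover} further restricts $a,b\in\{0,1\}$.

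The heart of the argument, and the main obstacle, is to rule out the three boundary cases $(a,b)\in\{(0,0),(1,0),(0,1)\}$ using the connectedness of the fibers of the $f_i$. The case $(0,0)$ is immediate: $B=0$ would make $S$ non-reduced. For $(a,b)=(1,0)$, the divisor $B$ is a union of two fibers of the first projection $\pi_1\colon\p^1\times\p^1\to\p^1$, hence disjoint from any other fiber of $\pi_1$; therefore the restriction of $f$ over a general fiber of $\pi_1$ is an étale double cover of $\p^1$, necessarily disconnected, contradicting the connectedness of the fibers of $f_1=\pi_1\circ f$. The case $(0,1)$ is symmetric. This leaves only $(a,b)=(1,1)$, giving $B$ reduced of type $(2,2)$ and $\sL=\sO(1,1)$; the identity $K_S=f^*\sO(-1,-1)$ then yields $K_S^2=4$ by direct computation, recovering the first alternative of the proposition.
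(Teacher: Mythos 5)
Your proof is correct and follows the same overall strategy as the paper: apply the Hodge-index inequality of Lemma~\ref{lemma:hodge_index_theorem} to the two fiber classes to bound $\deg(f)\le 2$, then use the double-cover structure of \ref{say:double_cover} and Lemma~\ref{lemma:degree_double_cover} to identify $S$. The one genuine point of divergence is how you eliminate the branch types $(a,b)\in\{(0,0),(1,0),(0,1)\}$. You do this via the connectedness of the fibers of $f_1$ and $f_2$, which works, but it is a detour: the Hodge-index inequality you have already invoked gives more than the bound $\deg(f)\le 2$. Plugging $C_1\cdot C_2=\deg(f)=2$ into the same inequality yields $K_S^2\le 4$; since the double-cover formula gives $K_S^2=\deg(f)\cdot(2-a)(2-b)\cdot 2=4(2-a)(2-b)$, the types $(0,0),(1,0),(0,1)$ produce $K_S^2=16,8,8$ respectively, which all violate the bound, and only $(1,1)$ survives with $K_S^2=4$. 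This is (implicitly) what the paper's ``straightforward computation'' does, and it makes the connected-fibers hypothesis unnecessary in the degree-two branch of the argument.

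A minor imprecision in your write-up: for $(a,b)=(0,0)$ you say that $B=0$ ``would make $S$ non-reduced.'' In fact, if the section $s$ is a nonzero constant then the double cover is \'etale, hence \emph{disconnected} (since $\p^1\times\p^1$ is simply connected); only $s=0$ would make the cover non-reduced. Either way $S$ fails to be a normal irreducible surface, so your contradiction stands, but the stated reason is not quite the right one.
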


\begin{proof}
Let $C_1$ and $C_2$ be general fibers of $f_1$ and $f_2$ respectively.
Notice that $-K_S \cdot C_1=-K_S \cdot C_2=2$.
By Lemma \ref{lemma:hodge_index_theorem}, we have
$$6 \, C_1\cdot C_2 \le 2K_S^2(C_1\cdot C_2) \le (K_S\cdot C_1+K_S\cdot C_2)^2=16,$$
and thus
$\deg(f)=C_1 \cdot C_2 \le 2$. If $\deg(f) = 1$, then $S \cong \p^1\times\p^1$ and $K_S^2=8$. If 
$\deg(f) = 2$, then $f$ makes $S$ a double cover of $\p^1\times\p^1$ branched along a reduced divisor of type $(2,2)$ by Lemma \ref{lemma:degree_double_cover}. A straightaforward computation then shows that $K_S^2=4$.
\end{proof}

\begin{prop}\label{proposition:characterization_double_cover}
Let $X$ be a $\mathbb{Q}$-Fano variety of dimension $n \ge 3$ with Gorenstein canonical singularities. 
Let $f\colon X \to (\p^1)^n$ be a finite morphism 
of degree $\deg(f) >1$
such that each projection 
$X \to (\p^1)^{n-1}$ has connected fibers. Then $f$ makes $X$ a double cover of 
$(\p^1)^n$ branched along a reduced divisor of type $(2,\ldots,2)$.   
\end{prop}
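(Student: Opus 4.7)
The strategy is to reduce to Proposition \ref{proposition:finite_morphism_quadric_bis}, using Lemma \ref{lemma:anticanonical_map} as the decisive leverage in the three-dimensional case. Set $Y=(\p^1)^n$, $d=\deg(f)\ge 2$, and for each subset $J\subseteq\{1,\dots,n\}$ let $f_J\colon X\to \prod_{k\in J}\p^1$ denote the corresponding projection of $f$. I would first observe that $f_J$ has connected fibers for every nonempty $J\subsetneq\{1,\dots,n\}$: any such $f_J$ factors through one of the $f_{\{k\}^c}$ given by hypothesis, composed with a product projection (also having connected fibers), and the composition of fibrations is a fibration. In particular every $f_k\colon X\to \p^1$ is a fibration.

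I would handle the base case $n=3$ first. Fix $k\in\{1,2,3\}$ and take a general fiber $S_k=f_k^{-1}(p_k)$. Viewing $S_k$ as a general member of the base-point-free linear system $|f_k^*\sO_{\p^1}(1)|$ on the canonical Gorenstein variety $X$, Bertini together with the standard inheritance of Gorenstein canonical singularities by general fibers show that $S_k$ is an irreducible Gorenstein Del Pezzo surface. The induced map $h_k=(f_i|_{S_k},f_j|_{S_k})\colon S_k\to \p^1\times \p^1$ (with $\{i,j\}=\{1,2,3\}\setminus\{k\}$) is finite of degree $d$ by flat base change, and its coordinate projections have connected fibers because their fibers coincide with fibers of $f_{\{k,i\}}$ or $f_{\{k,j\}}$. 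Supposing $d\ge 3$, Proposition \ref{proposition:finite_morphism_quadric_bis} forces $K_{S_k}^2<3$, hence $K_{S_k}^2\le 2$ since $K_{S_k}$ is Cartier, for every $k\in\{1,2,3\}$. But $f_1,f_2,f_3$ are three distinct fibrations $X\to \p^1$ (if two agreed, $f$ would factor through a diagonal, contradicting surjectivity), contradicting Lemma \ref{lemma:anticanonical_map}. Therefore $d=2$.

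With $d=2$, \ref{say:double_cover} presents $X$ as a double cover determined by a line bundle $\sL\cong \sO_Y(a_1,\dots,a_n)$ and a section $s\in H^0(Y,\sL^{\otimes 2})$, with branch $B=\{s=0\}$. For $n=3$, restricting $B$ to a general fiber $\pi_k^{-1}(p_k)\cong \p^1\times \p^1$ of the $k$-th projection recovers the branch of $h_k\colon S_k\to \p^1\times \p^1$, which by Proposition \ref{proposition:finite_morphism_quadric_bis} and Lemma \ref{lemma:degree_double_cover} has class $\sO(2,2)$; this forces $a_i=a_j=1$, and varying $k$ gives $\sL\cong\sO(1,1,1)$. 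Reducedness of $B$ follows from normality of $X$ via \ref{say:double_cover}. This completes the case $n=3$.

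For $n\ge 4$, I would reduce to the $n=3$ case: pick any 3-element subset $K\subseteq\{1,\dots,n\}$ and let $T$ be a general fiber of $f_{K^c}\colon X\to (\p^1)^{n-3}$. The same Bertini/inheritance argument shows that $T$ is a Gorenstein canonical $\mathbb{Q}$-Fano threefold, $f_K|_T\colon T\to (\p^1)^3$ is finite of degree $d$, and each of its projections $T\to (\p^1)^2$ has connected fibers (the fibers coincide with fibers of appropriate $f_J$ on $X$). Applying the $n=3$ case to $T$ yields $d=2$ and presents $f_K|_T$ as a double cover branched along a reduced divisor of type $(2,2,2)$; this forces $\sL|_{\pi_{K^c}^{-1}(q)}\cong \sO(1,1,1)$ for every $q$. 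Varying $K$ over all 3-element subsets gives $\sL\cong\sO(1,\dots,1)$ on $Y$, so $B\in |\sO(2,\dots,2)|$, and $B$ is reduced by normality of $X$. The main technical obstacle throughout is the Bertini-type inheritance of Gorenstein canonical singularities by $S_k$ and $T$; this is standard for general members of base-point-free linear systems on canonical Gorenstein varieties, but must be invoked with care since $X$ itself is only assumed to have canonical singularities.
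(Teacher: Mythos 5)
Your proof is correct and follows essentially the same route as the paper's: reduce to $n=3$ by slicing with a general fiber of the projection to $(\p^1)^{n-3}$, then combine Lemma \ref{lemma:anticanonical_map} (at most one of the three pencils can have general fiber with $K^2 \le 2$) with Proposition \ref{proposition:finite_morphism_quadric_bis} to pin down $\deg(f)=2$. You phrase the $n=3$ step as a contradiction where the paper argues directly ("WLOG $K_S^2\ge 3$"), but the two are logically the same, and your added remarks — that the three pencils are pairwise distinct, and that the branch type is forced to be $(2,\ldots,2)$ rather than just $(2d_1,\ldots,2d_n)$ by restricting to fibers — are correct and in fact fill in small gaps the paper leaves to the reader.
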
 

\begin{proof}By Lemma \ref{lemma:degree_double_cover}, it suffices to show that $\deg(f)=2$.

Suppose first that $n=3$, and let $f_1\colon X \to \p^1$ and $f^1\colon X \to \p^1\times \p^1$ such that $f=(f_1,f^1)$. Let $S$ be a general fiber of $f_1$. Notice that $S$ is a Del Pezzo surface. Moreover, the restriction of $f^1$ to $S$ induces a finite morphism 
${f^1}_{|S}\colon S \to \p^1\times \p^1$ such that each projection $S \to \p^1$ has connected fibers, and $\deg(f)=\deg({f^1}_{|S})$. By Lemma \ref{lemma:anticanonical_map}, we may assume without loss of generality that $K_S^2\ge 3$. We conclude that $\deg(f)=\deg({f^1}_{|S})= 2$
by Proposition \ref{proposition:finite_morphism_quadric_bis}. This proves the proposition in the case when $n=3$. 

Suppose from now on that $n \ge 4$ is arbitrary, and let $f_1\colon X \to (\p^1)^{n-3}$ and $f^1\colon X \to (\p^1)^3$ such that $f=(f_1,f^1)$. Let $F$ be a general fiber of $f_1$. Then $F$ is 
a $\mathbb{Q}$-Fano variety of dimension $3$ with Gorenstein canonical singularities.  
The restriction of $f^1$ to $F$ induces a finite morphism $F \to (\p^1)^3$
such that each projection $F \to (\p^1)^2$ has connected fibers, and $\deg(f)=\deg(f^1)$. We conclude that 
$\deg(f)=\deg(f^1)=2$ from the previous case, completing the proof of the proposition.
\end{proof}

\begin{rem}Proposition \ref{proposition:finite_morphism_quadric} shows that the statement of
Proposition \ref{proposition:characterization_double_cover} above becomes wrong when $\dim X=2$. 
\end{rem}

\begin{proof}[Proof of Proposition \ref{proposition:intro}]
Let $X$ be a $\mathbb{Q}$-Fano variety of dimension $n \ge 3$ with Gorenstein canonical singularities. 
Suppose that $\Nef(X)=\Pef(X)$ and $\rho(X)=n$. Suppose furthermore
that each non-trivial contraction $X \to Y$ with $\dim Y < n$ satisfies $Y\cong \p^1\times \cdots \times \p^1$.

By Lemma \ref{lemma:nef_pseff_picard}, the Mori cone $\NE(X)$ is simplicial.
Let $V_1,\ldots,V_n$ be the codimension $1$ faces of $\NE(X)$, and denote by $f_i\colon X \to \p^1$ 
the contraction of $V_i$. Set $f=(f_1,\ldots,f_n) \colon X \to (\p^1)^n$. Notice that $f$ is a finite morphism. If $\deg(f)=1$, then
$X \cong (\p^1)^n$. Suppose from now on that $\deg(f)>1$. Let $g\colon X \to (\p^1)^{n-1}$ be a projection
with Stein factorization $\phi \colon X \to Y\cong (\p^1)^{n-1}$. Then
$\phi$ is given by $n-1$ morphisms with connected fibers $X \to \p^1$, and hence $g=\phi$. 
Therefore, Proposition \ref{proposition:characterization_double_cover} applies. 
Using \ref{say:double_cover},
we conclude that $f$ makes $X$ a double cover of $(\p^1)^n$ branched along a reduced divisor of type $(2,\ldots,2)$.
This completes the proof of the theorem.
\end{proof}

\section{Finite morphisms onto $\mathbb{Q}$-Fano cyclic covers of $\p^1\times\cdots\times\p^1$}

In this section we prove Theorem \ref{thm_intro:finite_onto_double_cover}. 

\begin{lemma}\label{lemma:image_fano_finite_morphism}
Let $f \colon Z \to Y$ be a finite surjective morphism of normal projective $\mathbb{Q}$-Gorenstein
varieties. If $-K_Z$ is big, then so is $-K_Y$.
\end{lemma}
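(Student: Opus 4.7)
The plan is to prove this via the ramification formula and the fact that for a finite surjective morphism, pullback of a $\mathbb{Q}$-Cartier divisor is big if and only if the original is big.

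First I would recall the ramification formula in this setting. Let $U \subset Y$ be the smooth locus of $Y$ and set $V = f^{-1}(U) \cap Z^{\mathrm{sm}}$; since both $Y$ and $Z$ are normal, the complements of $U$ in $Y$ and of $V$ in $Z$ have codimension at least $2$. Over $V$ the usual Riemann--Hurwitz formula gives $K_V = (f|_V)^* K_U + R_V$ with $R_V$ an effective divisor supported on the ramification locus. Taking closures, and using that $Z$ is normal and both $K_Z$ and $f^*K_Y$ are $\mathbb{Q}$-Cartier, I obtain a linear equivalence of $\mathbb{Q}$-divisors
$$K_Z \sim_{\mathbb{Q}} f^*K_Y + R,$$
where $R$ is effective. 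Equivalently,
$$f^*(-K_Y) \sim_{\mathbb{Q}} -K_Z + R.$$

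Next, since $-K_Z$ is big by hypothesis, Kodaira's lemma provides a decomposition $-K_Z \sim_{\mathbb{Q}} A + E$ with $A$ ample and $E$ effective. Then
$$f^*(-K_Y) \sim_{\mathbb{Q}} A + (E+R),$$
exhibiting $f^*(-K_Y)$ as the sum of an ample and an effective $\mathbb{Q}$-divisor; hence $f^*(-K_Y)$ is big.

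Finally I would descend bigness from $f^*(-K_Y)$ to $-K_Y$. Since $f$ is finite and surjective, the volumes satisfy $\mathrm{vol}_Z(f^*D) = \deg(f) \cdot \mathrm{vol}_Y(D)$ for any $\mathbb{Q}$-Cartier divisor $D$ on $Y$ (this comes from $f_*\sO_Z$ having generic rank $\deg(f)$ on $Y$ together with the projection formula applied to $h^0(Z, m f^*D) = h^0(Y, f_*\sO_Z \otimes \sO_Y(mD))$ for $mD$ Cartier). Applying this to $D = -K_Y$ gives $\mathrm{vol}_Y(-K_Y) = \frac{1}{\deg(f)}\,\mathrm{vol}_Z(f^*(-K_Y)) > 0$, so $-K_Y$ is big.

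There is no real obstacle here beyond being careful with the ramification formula in the $\mathbb{Q}$-Gorenstein normal setting; the mild subtlety is only that one defines $R$ on the codimension-one-smooth locus and extends by normality, which is standard. The rest is a formal consequence of Kodaira's lemma and the behavior of volumes under finite surjective pullback.
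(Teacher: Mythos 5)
Your proof is correct, and its first two steps coincide with the paper's: establish $K_Z \sim_{\mathbb{Q}} f^*K_Y + R$ with $R$ effective via Riemann--Hurwitz on the smooth loci, then use Kodaira's lemma to conclude $f^*(-K_Y)$ is big. Where you diverge is in descending bigness from $f^*(-K_Y)$ to $-K_Y$: you invoke the volume scaling formula $\mathrm{vol}_Z(f^*D) = \deg(f)\cdot\mathrm{vol}_Y(D)$, whereas the paper instead chooses the ample part of the Kodaira decomposition to be a pullback, writing $m\big(f^*(-K_Y)\big) \sim_{\mathbb{Q}} f^*A + E$ with $A$ an ample Cartier divisor on $Y$ and $E$ effective on $Z$, and then pushes forward via the projection formula to get the explicit expression $-K_Y \sim_{\mathbb{Q}} \tfrac{1}{m}A + \tfrac{1}{m\deg(f)}f_*E$, which is visibly ample plus effective. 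The two routes are logically interchangeable here; the paper's push-forward argument is slightly more hands-on (it never needs to mention volumes and directly produces the required decomposition of $-K_Y$), while your volume argument is a clean black box but relies on the finite-morphism volume formula as an imported fact. Either is acceptable.
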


\begin{proof}
Denote by $R$ the ramification divisor of $f$. The Hurwitz formula
$K_Z \sim f^*K_Y+R$ shows that $-f^*K_Y \sim -K_Z+R$ is big.
Let $A$ be an ample Cartier divisor on $Y$, $E$ an effective divisor on $X$, 
and $m$ a positive integer such that 
$m(-f^*K_Y)\sim_\mathbb{Q}f^*A + E$. Then
$-K_Y\sim_\mathbb{Q}\frac{1}{m}A+\frac{1}{m\deg(f)}f_*E$
by the projection formula. Hence $-K_Y$ is big, proving the lemma.
\end{proof}

\begin{lemma}\label{lemma:double_cover_connected}
Let $Z$ be a $\mathbb{Q}$-Fano variety of dimension $n\ge 2$, $Y$ a double cover of $(\p^1)^n$ branched along a reduced divisor $B=\sum_{j\in J} B_j$ of type
$(2,\ldots,2)$, and $f\colon Z \to Y$ a finite morphism. Consider a projection $\pi_1\colon Y \to \p^1$, and set $f_1=\pi_1\circ f\colon Z\to \p^1$. 
Suppose that $B_j$ is not contracted by $\pi_1$ for each $j\in J$.
Then $f_1$ has connected fibers.
\end{lemma}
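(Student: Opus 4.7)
The strategy is to show that the finite part $h \colon C \to \p^1$ of the Stein factorization of $f_1$ has degree one, by producing a normal Gorenstein double cover $\bar Y$ of a suitable product, onto which $Z$ maps finitely, and then extracting information from bigness of $-K_{\bar Y}$.

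\medskip

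First, I would check that $\pi_1$ itself has connected fibers. The general fiber $\pi_1^{-1}(t)$ is the double cover of $(\p^1)^{n-1}$ branched along $B \cap (\{t\} \times (\p^1)^{n-1})$. Since each $B_j$ is not contracted by $\pi_1$, its type $(b_{j,1}, \ldots, b_{j,n})$ has some $b_{j,i} > 0$ for $i \geq 2$, so the restricted branch divisor is nonempty for every $t \in \p^1$. The resulting double covers are therefore connected, and by normality of $Y$ over the smooth $\p^1$ (Stein factorization), $\pi_1$ has connected fibers.

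\medskip

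Next, write the Stein factorization $f_1 = h \circ g$, with $g \colon Z \to C$ having connected fibers and $h$ finite of degree $d$ from a smooth projective curve $C$ of genus $g(C)$. Set $W := C \times (\p^1)^{n-1}$ and $\bar Y := Y \times_{\p^1} C$, so that $\bar Y \to W$ is the finite flat degree-$2$ cover obtained by base change from $Y \to (\p^1)^n$, with branch divisor $\bar B := (h \times \textup{id})^*B$. A local computation shows $\bar B$ is reduced: the non-contraction hypothesis rules out the only problematic case (some $B_j$ equal to a fiber of $\textup{pr}_1 \colon (\p^1)^n \to \p^1$), and a coprimality argument in $k(s)[x_1]$ for the local equation $f_j$ of $B_j$ (using $\gcd(f_j, \partial_{x_1} f_j) = 1$) then forces $f_j(h(c), s)$ to be square-free. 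By \ref{say:double_cover}, $\bar Y$ is therefore normal and Gorenstein; by the previous step it is also irreducible of dimension $n$. The universal property of the fiber product, applied to the compatibility $\pi_1 \circ f = f_1 = h \circ g$, produces a morphism $(f,g) \colon Z \to \bar Y$ that is finite (as $f$ is) and surjective (by dimension).

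\medskip

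Finally, Lemma \ref{lemma:image_fano_finite_morphism} applied to $(f,g) \colon Z \to \bar Y$, together with ampleness (hence bigness) of $-K_Z$, yields that $-K_{\bar Y}$ is big. On the other hand the double cover formula gives
$$-K_{\bar Y} \;=\; q^*\bigl(M \boxtimes \sO_{(\p^1)^{n-1}}(1, \ldots, 1)\bigr),$$
where $q \colon \bar Y \to W$ is the cover map and $M := K_C^{-1} \otimes h^*\sO_{\p^1}(-1)$ has degree $2 - 2g(C) - d$ on $C$. Bigness is preserved under pullback along finite surjective morphisms, and a K\"unneth computation (using Riemann--Roch on $C$) shows that a line bundle of the form $M \boxtimes \sO(1,\ldots,1)$ on $C \times (\p^1)^{n-1}$ is big if and only if $\deg M > 0$. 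Hence $2 - 2g(C) - d > 0$, which together with $d \geq 1$ and $g(C) \geq 0$ forces $d = 1$ (and $g(C) = 0$); thus $h$ is an isomorphism and $f_1$ has connected fibers. The main obstacle is the reducedness of $\bar B$ in the previous step, which is precisely where the non-contraction hypothesis plays its essential role.
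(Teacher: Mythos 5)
Your proposal is correct and follows essentially the same route as the paper: form the base change $\bar Y = Y \times_{\p^1} C$ along the Stein factorization, verify it is normal and Gorenstein (the paper phrases this via generically reduced fibers of $\alpha_1$, you via reducedness of the pulled-back branch divisor, but these are the same fact), map $Z$ finitely onto $\bar Y$, and invoke Lemma~\ref{lemma:image_fano_finite_morphism} to force $\deg(h) = 1$ from bigness of $-K_{\bar Y}$. The only cosmetic difference is that you allow the intermediate curve $C$ to have arbitrary genus and derive $g(C) = 0$, whereas the paper uses from the outset that $C \cong \p^1$ (since $Z$ is rationally connected), and your opening claim that $\pi_1$ has connected fibers is not actually needed for the rest of the argument.
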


\begin{proof}Denote by $\pi\colon Y\to (\p^1)^n$ the natural morphism, and let 
$\pi^1\colon Y \to (\p^1)^{n-1}$ such that $\pi=(\pi_1,\pi^1)$.
The Stein factorization $g_1\colon Z \to \p^1$ of $f_1=\pi_1\circ f$ fits into a commutative diagram
$$
\xymatrix{
    Z \ar@/^1pc/[rr]^{f} \ar[r] \ar[d]^{g_1} & Y_1 \ar[r]_{\beta_1}\ar[d]^{\alpha_1} & Y \ar@/^1pc/[rr]^{\pi^1}\ar[d]^{\pi_1}\ar[r]_{\pi} & (\p^1)^n\ar[r]\ar[d] & (\p^1)^{n-1}\\
    \p^1 \ar@{=}[r] & \p^1\ar[r]_{h_1} &  \p^1 \ar@{=}[r] & \p^1. &
  }
$$
where $(\alpha_1,\pi_1\circ\beta_1)\colon Y_1 \to (\p^1)^n$ is the double cover of 
$(\p^1)^n$ branched along $(h_1,\textup{Id}_{(\p^1)^{n-1}})^*B$.
The variety $Y_1$ is Cohen Macaulay by \ref{say:double_cover}. 
All fibers of $\pi_1$ are generically reduced since $\pi_1$ does not contract any 
irreducible component of $B$. This implies that all fibers of $\alpha_1$ are generically reduced as well, and hence $\dim\big(\Sing(Y_1)\big)\le n-2$. We conclude that $Y_1$ is normal.
A straightforward computation gives
$$\sO_{Y_1}(-K_{Y_1})\cong 
\alpha_1^*\sO_{\p^1}\big(2-\deg(h_1)\big)\otimes 
(\pi^1\circ\beta_1)^*\big(\sO_{\p^1}(1)\boxtimes\cdots\boxtimes\sO_{\p^1}(1)\big).$$
By Lemma \ref{lemma:image_fano_finite_morphism},
$-K_{Y_1}$ is big. Hence, we must have $\deg(h_1)=1$, proving the lemma.
\end{proof}

\begin{prop}\label{proposition:double_cover_ter}
Let $S$ be a $\mathbb{Q}$-Fano surface with Gorenstein singularities satisfying $K_S^2\ge 3$,
$T$ a double cover of $\p^1\times\p^1$ branched along a reduced divisor $B=\sum_{j\in J} B_j$ of type
$(2,2)$, and $f\colon S \to T$ a finite morphism. Suppose that $B_j$ is ample 
for each $j\in J$. Then $\deg(f)=1$.
\end{prop}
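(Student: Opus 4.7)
The plan is to reduce to Proposition \ref{proposition:finite_morphism_quadric_bis}, which already bounds the degree of a finite morphism from a Gorenstein $\mathbb{Q}$-Fano surface with $K^2\ge 3$ onto $\p^1\times\p^1$ under a connectedness hypothesis on each coordinate projection. Since the statement on $S$ is exactly the hypothesis of that proposition (Gorenstein, $K_S^2\ge 3$), everything comes down to passing $f$ through the double cover $\pi\colon T\to\p^1\times\p^1$ and verifying the required connectedness of the two projections to $\p^1$.

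Concretely, I would form the finite composition $g=\pi\circ f\colon S\to\p^1\times\p^1$, of degree $2\deg(f)$, and for each factor projection $\pi_i\colon\p^1\times\p^1\to\p^1$ set $f_i=\pi_i\circ g$. Ampleness of each $B_j$ implies $B_j$ surjects onto both factors; in particular no $B_j$ is contracted by $\pi_i\circ\pi\colon T\to\p^1$, so Lemma \ref{lemma:double_cover_connected} (with $n=2$, which is admissible) applies and yields that $f_i$ has connected fibers for $i=1,2$. Thus $g=(f_1,f_2)$ is a finite morphism $S\to\p^1\times\p^1$ whose two projections to $\p^1$ have connected fibers, which is precisely the setup of Proposition \ref{proposition:finite_morphism_quadric_bis}.

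Invoking that proposition, $\deg(g)\in\{1,2\}$. Because $\deg(g)=2\deg(f)$ is a positive even integer, the value $1$ is excluded, so $\deg(g)=2$ and hence $\deg(f)=1$. (In passing, this shows we land in the $K_S^2=4$ case, where $S$ itself is a double cover of $\p^1\times\p^1$; the $K_S^2=8$, $S\cong\p^1\times\p^1$ case is automatically ruled out by parity.)

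The only non-formal step is the application of Lemma \ref{lemma:double_cover_connected}, and its hypothesis is cleanly supplied by the ampleness of every $B_j$; once this is in place, the rest is bookkeeping and a parity argument, so I do not expect a genuine obstacle. The hypothesis $K_S^2\ge 3$ in the statement of the proposition is there exactly to feed Proposition \ref{proposition:finite_morphism_quadric_bis}, and the Gorenstein assumption is used implicitly so that the Hodge-index calculation underlying that proposition applies.
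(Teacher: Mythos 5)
Your proposal is correct and follows essentially the same route as the paper: both proofs compose $f$ with $\pi$, use Lemma \ref{lemma:double_cover_connected} (applicable since ample $B_j$ cannot be a fiber of either projection) to get connectedness of the two projections, and then close with a Hodge-index degree bound. The only cosmetic difference is that the paper invokes Lemma \ref{lemma:hodge_index_theorem} directly to get $\deg(g)K_S^2\le 8$, hence $\deg(f)\le 4/K_S^2\le 4/3$ and so $\deg(f)=1$, whereas you cite Proposition \ref{proposition:finite_morphism_quadric_bis} (whose proof is precisely that Hodge-index bound) and then finish by parity; both are fine and essentially identical in substance.
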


\begin{proof}
Denote by $\pi\colon T \to \p^1\times\p^1$ the natural morphism, and denote by 
$\pi_1\colon T \to\p^1$ and
$\pi_2\colon T \to\p^1$ the projections. Set $g_i=\pi_i\circ f \colon S \to \p^1$, and
$g=(g_1,g_2)\colon S \to \p^1\times\p^1$ so that $g=\pi\circ f$.
By Lemma \ref{lemma:double_cover_connected} above, 
$g_i$ has connected fibers.  By Lemma \ref{lemma:hodge_index_theorem}, we have $\deg(g)K_S^2 \le 8$.
Since $\deg(g)=2\deg(f)$, we obtain $$\deg(f) \le \frac{4}{K_S^2} \le \frac{4}{3}.$$
Thus $\deg(f)=1$, proving the proposition.
\end{proof}

The following example show that the statement of Proposition \ref{proposition:double_cover_ter} becomes wrong if one relaxes the assumption on $B$.

\begin{exmp}
Let $C_1$ and $C_2$ be smooth rational curves, and let $c\in C_1$. 
Denote by $p_1$ the projection $C_1\times C_2 \to C_1$, and 
set $B_1=p_1^{-1}(c)$. Let $B_2$ be a general member in the linear system 
$|\sO_{C_1}(1)\boxtimes\sO_{C_2}(2)|$. 
Set $B=B_1+B_2$, and let $\pi\colon S \to C_1\times C_2$ be the $2$-fold cyclic cover branched along 
$B$. Then $S$ is a normal surface of degree $K_S^2=4$ with $2$ singular points of type $A_1$.
Let $c'\in C_1$ with $c'\neq c$, and let
$h\colon \bar C_{1} \to C_1$ be the $2$-fold cyclic cover branched along
$c+c'$. Let $\bar S$ be the normalization of the fiber product 
$S \times_{C_1} \bar C_1$ with natural morphisms $\alpha\colon \bar{S}\to \bar{C}_1$ and 
$\beta\colon \bar{S} \to S$. A straightforward computation shows that $\beta$ is \'etale at each point
in $\bar{S}\setminus \big((\pi\circ\beta)^{-1}(B_1\cap B_2) \cup (h\circ\alpha)^{-1}(c') \big)$.
Moreover, $\beta$ ramifies along 
$(h\circ\alpha)^{-1}(c')$ with multiplicity $2$. Therefore, 
$K_{\bar{S}}\sim \beta^*K_S+(h\circ\alpha)^{-1}(c')$. We conclude that $\bar{S}$ is $\mathbb{Q}$-Fano with Gorenstein singularities.
Notice that 
$\pi_1\big(S\setminus \Sing(S)\big)\cong \pi_1\big(\p^1\setminus \{c\}\big)=\{1\}$
by Lemma \ref{lemma:pi_1}. Moreover, by 
Proposition \ref{proposition:finite_morphism_quadric_bis}, we have $\bar{S}\cong \p^1\times\p^1$.
\end{exmp}

In the setup of Proposition \ref{proposition:double_cover_ter}, suppose moreover that
$S$ and $T$ are smooth. Then we show that the conclusion still holds when $K_S^2\le 2$.

\begin{prop}\label{proposition:finite_morphism_degree4}
Let $S$ and $T$ be smooth Del Pezzo surfaces, and let $f\colon S \to T$ be a finite morphism.
Suppose that $K_T^2=4$. Then $\deg(f)=1$.
\end{prop}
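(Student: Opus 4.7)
The plan is to reduce to the case in which $S$ is also a smooth Del Pezzo surface of degree $4$, and then derive a contradiction under the assumption $d:=\deg(f)\ge 2$.

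First I would show $K_S^2=K_T^2=4$. Since $f$ is finite, the pullback $f^*\colon \textup{NS}(T)\to\textup{NS}(S)$ is injective, hence $\rho(S)\ge \rho(T)=6$. For a smooth Del Pezzo surface one has $K^2+\rho=10$, so $K_S^2\le 4$. On the other hand, Theorem~\ref{thm:morphism_Del_pezzo_surface}, applied contrapositively since $K_T^2=4<8$, forbids $K_S^2<K_T^2$. Therefore $K_S^2=K_T^2=4$, and $S$ is itself a smooth Del Pezzo surface of degree $4$, carrying five conic bundle structures whose fiber classes pair as $F_k^2=0$, $-K_T\cdot F_k=2$, $F_k\cdot F_\ell=1$ for $k\ne\ell$ (analogously for $S$). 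Writing $K_S=f^*K_T+R$ with $R$ effective and nonzero (the latter because $T$ is simply connected, so $f$ cannot be \'etale for $d\ge 2$), the numerical identity $4=K_S^2=4d+2f^*K_T\cdot R+R^2$ combined with Hodge Index on $(-K_S,-f^*K_T)$ yields $-f^*K_T\cdot R\le 4d-4\sqrt{d}$ and $-K_S\cdot R\ge 4(\sqrt{d}-1)$.

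I would treat $d=2$ directly. In that case $f$ is a double cover branched along a smooth divisor $B\subset T$ (smooth because $S$ is smooth). Writing $B=2L$, the equation $K_S^2=2(K_T+L)^2=4$ yields $B^2+4K_T\cdot B=-8$. Meanwhile, Euler-characteristic additivity for a double cover gives $\chi_{\textup{top}}(B)=2\chi_{\textup{top}}(T)-\chi_{\textup{top}}(S)=8$, and adjunction for the smooth branch divisor $B$ gives $\chi_{\textup{top}}(B)=-B^2-K_T\cdot B$, whence $B^2+K_T\cdot B=-8$. Subtracting these two identities gives $K_T\cdot B=0$, contradicting the ampleness of $-K_T$ and the effectivity of $B$.

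For $d\ge 3$ I would exploit the five conic bundles $\pi_k\colon T\to\p^1$. Stein-factorize $\pi_k\circ f$ as $h_k\circ g_k'$ with $g_k'$ having connected fibers; since $-K_S$ is ample and a general fiber $G'$ of $g_k'$ satisfies $-K_S\cdot G'=2$ and $G'^2=0$, adjunction forces $G'\cong \p^1$, so $g_k'$ is a conic bundle on $S$. The injectivity of $f^*$ prevents two different $\pi_k$ from inducing the same $g_k'$, so the five $g_k'$ are exactly the five conic bundles of $S$, with fiber classes $G_k'$ satisfying $G_k'\cdot G_\ell'=1$ for $k\ne\ell$. Letting $m_k:=\deg(h_k)$, the relation $f^*F_k=m_k G_k'$ in $\textup{NS}(S)$ together with $d=F_k\cdot F_\ell\cdot d=(f^*F_k)(f^*F_\ell)=m_k m_\ell$ forces all $m_k$ to coincide with some integer $m$ and $d=m^2$ to be a perfect square.

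The hard step is to rule out $d=m^2\ge 4$. Over each branch point $a$ of $h_k$ with preimages $c^{(j)}$ of ramification indices $e^{(j)}$, the scheme $f^*F_k(a)$ equals $\sum_j e^{(j)}G_k'(c^{(j)})$, forcing $f$ to be ramified of index $e^{(j)}$ along each fiber $G_k'(c^{(j)})$, hence contributing $e^{(j)}-1$ to the multiplicity of $R$ along that fiber. Summing over all branch points of $h_k$ (by Hurwitz applied to $h_k\colon \p^1\to\p^1$), then over $k=1,\ldots,5$, gives the divisorial lower bound $R\ge\sum_{k,j}(e^{(j)}_{a_k}-1)G_k'(c^{(j)}_{a_k})$, whose $-K_S$-degree is $20(m-1)$ assuming the ramified fibers contribute distinct components. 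The hardest point will be to control the possible coincidences of these fibers (which can happen only when a $(-1)$-curve of $S$ sits in reducible fibers of several $g_k'$); once this is dealt with, comparing the lower bound $20(m-1)$ with the upper bound $-K_S\cdot R=4(d-1)-(-f^*K_T\cdot R)\le 4(m^2-1)$ yields a contradiction for $m=2,3$, and the remaining values of $m$ are eliminated by the integrality constraints on $R^2$ and $-K_S\cdot R$ coming from $\chi(\mathcal{O}_S(R))\in\mathbb{Z}$ combined with the Hodge Index bound.
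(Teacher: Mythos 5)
Your Phase~1 reduction to $K_S^2=K_T^2=4$ is logically sound: $\rho(S)\ge\rho(T)$ gives $K_S^2\le 4$, and Theorem~\ref{thm:morphism_Del_pezzo_surface} (whose proof uses Propositions~\ref{proposition:finite_morphism_quadric_bis}, \ref{proposition:finite_morphism_quadric}, Lemma~\ref{lemma:degree2_quadric}, Lemma~\ref{endomorphism_S_7} and the observation~\ref{finite_morphism_exceptional_curves}, but never the statement of Proposition~\ref{proposition:finite_morphism_degree4}) rules out $K_S^2<4$. Note however that this inverts the paper's logical order, which proves the present proposition first. Your Phase~2 handling of $\deg(f)=2$ by combining $\chi_{\mathrm{top}}(B)=2\chi_{\mathrm{top}}(T)-\chi_{\mathrm{top}}(S)=8$, adjunction $\chi_{\mathrm{top}}(B)=-B^2-K_T\cdot B$, and the numerical Hurwitz relation $B^2+4K_T\cdot B=-8$ to force $K_T\cdot B=0$ is correct and elegant; it is genuinely different from anything in the paper.

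Phase~3 is where the plan breaks. First, the combinatorial input is off: a smooth degree-$4$ Del Pezzo surface has ten conic-bundle classes $F$ with $F^2=0$, $-K\cdot F=2$ (namely $L-E_i$ and $2L-\sum_{j\ne i}E_j$, $1\le i\le 5$), not five, and these pair up into five pairs $\{F,-K_T-F\}$ with $F\cdot(-K_T-F)=2$. One can certainly choose five of them pairwise intersecting in $1$ (e.g.\ $L-E_1,\dots,L-E_5$), but there is no reason the five induced conic bundles $g_k'$ on $S$ fall into a single such pairwise-$1$ quintuple on $S$; they could a~priori mix both types, in which case your key identity $d=m_km_\ell$ (and hence $d=m^2$) fails. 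Second, and as you yourself flag, the ramification lower bound $-K_S\cdot R\ge 20(m-1)$ requires that the ramified fibers of the five $g_k'$ contribute distinct divisorial components to $R$; controlling the coincidences among reducible fibers of several conic bundles on a degree-$4$ Del Pezzo is a real issue, not a formality, and the proposal does not resolve it. Even granting the bound, your comparison with $-K_S\cdot R\le 4(m^2-1)$ only eliminates $m\le 3$, and the integrality argument for $m\ge 4$ is not spelled out. So the $d\ge 3$ case is a genuine gap.

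For comparison, the paper's proof is both shorter and uniform in $\deg(f)$, and does not require the reduction to $K_S^2=4$. It exhibits $T$ (via its diagonalized $(2,2)$ complete-intersection model in $\p^4$) as a double cover of a smooth quadric $Q\cong\p^1\times\p^1$ branched along a smooth $(2,2)$-curve, so $T$ carries two conic bundles $\pi_1,\pi_2$ with $F_1+F_2=-K_T$ and $F_1\cdot F_2=2$; by Lemma~\ref{lemma:double_cover_connected} the composites $g_i=\pi_i\circ f$ have connected fibers, hence are conic bundles on $S$. Taking a $(-1)$-curve $F$ inside a reducible fiber of $\pi_1$, connectedness forces $g_1^{-1}(b_0)=E_1\cup E_2$ with $E_i$ being $(-1)$-curves, $f^{*}(F+F')=E_1+E_2$, and since $f$ cannot collapse the two components $F,F'$ onto one, $f^{*}F=E$ is a single $(-1)$-curve with multiplicity~$1$; squaring gives $\deg(f)\cdot(-1)=-1$. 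You might consider grafting this pull-back-a-$(-1)$-curve step onto your setup in place of Phase~3.
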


\begin{proof}
By \cite[Theorem 8.6.2]{dolgachev_book}, $T$ is isomorphic to the complete intersection given by equations
$$x_0^2+x_1^2+x_2^2+x_3^2+x_4^2=a_0x_0^2+a_1x_1^2+a_2x_2^2+a_3x_3^2+a_4x_4^2=0$$ 
in $\p^4$, where
$a_i\neq a_j$ for $i\neq j$. We may assume without loss of generality that 
$a_4\neq 0$. Thus, the projection from $(0,0,0,0,1)$ induces a finite morphism 
$\pi\colon T \to Q$
of degree $2$
onto the smooth quadric surface $Q\cong\p^1\times\p^1$ given by equation 
$$(a_0-a_4)x_0^2+(a_1-a_4)x_1^2+(a_2-a_4)x_2^2+(a_3-a_4)x_3^2=0.$$
Moreover, $\pi$ is branched along a smooth curve of type $(2,2)$. 
Denote by 
$\pi_i\colon Y \to\p^1$ with $i\in\{1,2\}$ the natural projections, and set
$g_i=\pi_i\circ f \colon S \to \p^1$. 
By Lemma \ref{lemma:double_cover_connected},  
$g_i$ has connected fibers. Let $F$ be a component of a reducible fiber of $\pi$, and set
$E=f^{-1}(F)$. Then $E$ and $F$ are exceptional curves on $S$ and $T$ respectively, and 
$f^*F=E$. Taking squares gives $\deg(f)=1$, proving the proposition.
\end{proof}

\begin{proof}[Proof of Theorem \ref{thm_intro:finite_onto_double_cover}] 
Let $Z$ be a $\mathbb{Q}$-Fano variety of dimension $n \ge 3$ with Gorenstein canonical singularities, $Y$ a double cover of $(\p^1)^n$ branched along a reduced divisor $B=\sum_{j\in J}B_j$ of type
$(2,\ldots,2)$, and $f\colon Z \to Y$ a finite morphism. Suppose that 
$B_j$ is ample for each $j\in J$.

Let $p_1\colon  (\p^1)^n \to \p^1$ be any projection, and let 
$p^1\colon  (\p^1)^n \to (\p^1)^{n-1}$ be the projection onto the product of the remaining factors.
Denote by $\pi\colon Y \to (\p^1)^n$ the natural morphism, and set $\pi_1=p_1\circ \pi$ and $\pi^1=p^1\circ \pi$ so that $\pi=(\pi_1,\pi^1)$.
Let $Y_1$ be a general fiber of $\pi_1$. Then $\pi_{|Y_1}$ makes $Y_1$
a double cover of 
$\pi(Y_1)\cong(\p^1)^{n-1}$ branched along the reduced divisor 
$B_{|\pi(Y_1)}$.

We show that there is a dense open subset $U \subset \p^1$ with the following property. For any 
$w\in U$, denote by $Y_w$ the fiber of $\pi_1$ over $w$. Then any irreducible component of
$B_{|\pi(Y_w)}$ is ample. 
To prove the claim, suppose to the contrary that there is  
a dense subset $W \subset \p^1$ such that, for any $w\in W$, some irreducible component of 
$B_{|\pi(Y_w)}$ is not ample. 
Notice that an effective divisor on $\pi(Y_w)$ is not ample
if and only if it is the pullback of some effective divisor under a projection
$\pi(Y_w)\cong(\p^1)^{n-1}\to(\p^1)^{n-2}$. 
After replacing $W$ with a subset of it if necessary, we may assume that there exists a projection $q^1 \colon (\p^1)^{n-1} \to (\p^1)^{n-2}$ such that, for any $w\in W$, 
$B_{|\pi(Y_w)}$ contains in its support the pullback of a prime divisor under
$q^1\circ {p^1}_{|\pi(Y_w)} \colon \pi(Y_w) \to (\p^1)^{n-2}$.
This implies that $B$ contains in its support the pullback of some effective divisor
under $(p_1,q\circ p^1)\colon (\p^1)^n \to \p^1\times(\p^1)^{n-2}$. But this contradicts 
the fact that 
$B_j$ is ample for each $j\in J$, proving our claim.
Thus, we may assume that any irreducible component of $B_{|\pi_1(Y_1)}$
is ample.

By Lemma \ref{lemma:double_cover_connected}, $Z_1=f^{-1}(Y_1)$ is connected. Moreover, $Z_1$ has Gorenstein canonical singularities, and the restriction of $f$ to $Z_1$ induces a finite morphism
$f_1\colon Z_1\to Y_1$ with $\deg(f_1)=\deg(f)$. 
Therefore, to prove the proposition, it suffices to consider the case when $n=3$.
By Lemma \ref{lemma:anticanonical_map}, we may assume without loss of generality that $K_{Y_1}^2 \ge 3$.
Then the result follows from Proposition \ref{proposition:double_cover_ter}.
\end{proof}

\begin{cor}
Let $Z$ be a Fano manifold of dimension $\ge 2$, $Y$ a smooth double cover of $\p^1\times \cdots \times \p^1$ branched along a reduced divisor $B$ of type
$(2,\ldots,2)$, and $f\colon Z \to Y$ a finite morphism. 
Then $\deg(f)=1$.
\end{cor}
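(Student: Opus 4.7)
The plan is to reduce this corollary to Theorem~\ref{thm_intro:finite_onto_double_cover} and Proposition~\ref{proposition:finite_morphism_degree4} by observing that the smoothness of $Y$ forces the branch divisor $B$ to be irreducible, and hence automatically ample.

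First I would use~\ref{say:double_cover} to note that since $Y$ is smooth, the branch divisor $B \subset (\p^1)^n$ is itself smooth: the local model $t^2 = s$ of the double cover is smooth if and only if $s$ cuts out a smooth subscheme. Writing $B = \sum_{j \in J} B_j$ as a sum of distinct prime divisors, smoothness of $B$ implies that its components are pairwise disjoint, and in particular $[B_j]\cdot[B_k] = 0$ in $H^4\big((\p^1)^n\big)$ for $j \ne k$.

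The heart of the argument is a short combinatorial step. Writing $d^j = (d^j_1, \ldots, d^j_n)$ for the multi-degree of $B_j$, the vanishing of $[B_j]\cdot[B_k]$ against each of the basis classes $H_i H_\ell$ with $i<\ell$ in $H^4\big((\p^1)^n\big)$ forces $d^j_i d^k_\ell = 0$ for all $i \ne \ell$ and all $j \ne k$. An elementary case analysis then shows that every $B_j$ must have multi-degree supported at a single common index $i_0 \in \{1,\ldots,n\}$: if some $d^j$ had two nonzero entries, every other $d^k$ would be forced to be zero. But then $\sum_j d^j$ is supported at $\{i_0\}$, contradicting the assumption that this sum equals $(2,\ldots,2)$ as soon as $n \ge 2$. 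Hence $|J|=1$, so $B$ is irreducible; being of type $(2,\ldots,2)$ with every entry positive, it is ample on $(\p^1)^n$.

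To conclude: if $\dim Y \ge 3$, Theorem~\ref{thm_intro:finite_onto_double_cover} applies directly with $J=\{1\}$ and yields $\deg(f)=1$. If $\dim Y = 2$, then $Z$ and $Y$ are smooth Del Pezzo surfaces, the formula of~\ref{say:double_cover} computes $K_Y^2 = 4$, and Proposition~\ref{proposition:finite_morphism_degree4} yields $\deg(f)=1$. The main obstacle in this argument is the irreducibility step, and it is precisely there that smoothness of $Y$ is used essentially: for $Y$ with only mild singularities the branch divisor may split into components not all of which are ample, and Theorem~\ref{thm_intro:finite_onto_double_cover} would no longer be directly available.
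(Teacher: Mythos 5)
Your proof is correct and has the same global structure as the paper's: reduce to Theorem~\ref{thm_intro:finite_onto_double_cover} when $\dim Y\ge 3$ and to Proposition~\ref{proposition:finite_morphism_degree4} when $\dim Y=2$, the crux being that $B$ is irreducible so that its unique component is of type $(2,\dots,2)$ and hence ample. Where you and the paper differ is in how irreducibility is justified. The paper simply states that $\Supp(B)$ is smooth ``and hence irreducible'', tacitly relying on the connectedness of $B$ — a smooth ample divisor on $(\p^1)^n$ with $n\ge 2$ — which one would normally deduce from Kodaira vanishing or a K\"unneth computation of $H^1\big((\p^1)^n,\sO(-2,\dots,-2)\big)=0$. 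You instead give an elementary combinatorial argument: smoothness plus reducedness forces distinct components to be disjoint, hence to have vanishing intersection class in $H^4\big((\p^1)^n\big)$; the resulting conditions $d^j_i d^k_\ell=0$ for $i\ne\ell$ confine each component's multidegree to a single common index, contradicting the total degree $(2,\dots,2)$ when $n\ge 2$. This is more self-contained (no cohomological vanishing needed) and makes explicit the point the paper glosses over; the only small gap is that you should spell out why the disjointness constraint also forces the supporting index to be \emph{common} to all components (take $i=i(j)$, $\ell=i(k)$ with $i(j)\ne i(k)$ in $d^j_i d^k_\ell=0$ to see both entries cannot be positive). With that sentence added the argument is complete.
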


\begin{proof}If $\dim Y=2$, then the result follows from  Proposition
\ref{proposition:finite_morphism_degree4}. Suppose from now on that $\dim Y \ge 3$. Since $Y$ is smooth, $\textup{Supp}(B)$ is smooth by \ref{say:double_cover}, and hence irreducible. Thus Theorem \ref{thm_intro:finite_onto_double_cover} applies. 
\end{proof}

\section{Proof of Theorem \ref{thm:main}}

We are now in position to prove our main result. Notice that Theorem \ref{thm:main} is an immediate consequence of 
Lemma \ref{lemma:target_log_Fano} and
Theorem \ref{thm:main_log_version} below.

\begin{thm}\label{thm:main_log_version}
Let $(X,B)$ be a $\mathbb{Q}$-Fano pair 
with locally factorial canonical singularities.
Suppose that
$\Nef(X)=\Pef(X)$ and $\rho(X) = \dim X$. Then   
$X \cong X_1 \times \cdots \times X_m$
where $X_i$ is a double cover of 
$\p^1\times\cdots\times\p^1$ branched along a reduced divisor of type
$(2,\ldots,2)$ for each 
$i \in \{1,\ldots,m\}$.
\end{thm}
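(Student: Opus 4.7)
The plan is to argue by induction on $n = \dim X$. The base cases $n \le 2$ are immediate: when $n=1$, the hypotheses force $X \cong \p^1$ (viewed as a double cover of $\p^1$ branched at two points); when $n=2$, the argument of Proposition \ref{proposition:nef_psef_surfaces} applies (its only input on $S$ is local factoriality together with the existence of an elementary contraction to $\p^1$, which is supplied by Lemma \ref{lemma:nef_pseff_picard}), so $\rho(X)=2$ forces $X \cong \p^1 \times \p^1$, a product of two double covers of $\p^1$.

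For $n \ge 3$, assume the theorem holds in dimension $<n$. By Lemma \ref{lemma:nef_pseff_picard}, $\NE(X)$ is simplicial and every contraction $\phi\colon X \to Y$ from a face of $\NE(X)$ is equidimensional with $\dim Y = \rho(Y) < n$. Combining Lemma \ref{lemma:image_nef_egal_psef}, Corollary \ref{corollary:factoriality_contraction} and Lemma \ref{lemma:target_log_Fano}, each such $Y$ (with an auxiliary boundary) satisfies the hypotheses of the theorem in smaller dimension, so the induction hypothesis applies and $Y$ decomposes as a product of double covers of products of $\p^1$'s. Following the introductory sketch, split into two cases.

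\emph{Case 1:} every non-trivial contraction $\phi\colon X \to Y$ with $\dim Y < n$ satisfies $Y \cong (\p^1)^{\dim Y}$. Then Proposition \ref{proposition:intro} applies and yields either $X \cong (\p^1)^n$ or $X$ is a double cover of $(\p^1)^n$ branched along a type $(2,\ldots,2)$ divisor, both of which fit the conclusion. \emph{Case 2:} some contraction $\phi \colon X \to Y$ has a genuine double-cover factor $Y_{i_0}$ in its product decomposition. By Proposition \ref{proposition:nef_psef_surfaces} no such factor is two-dimensional, so composing with the projection $Y \to Y_{i_0}$ one may assume $Y$ itself is a double cover of $(\p^1)^k$ with $k \ge 3$; after possibly further refining the product decomposition by splitting off sub-factors coming from pieces of the branch divisor pulled back from subproducts, one arranges that every irreducible component of the branch divisor of $Y \to (\p^1)^k$ is ample, so that the hypothesis of Theorem \ref{thm_intro:finite_onto_double_cover} is in force. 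Apply Lemma \ref{lemma:contraction} to obtain a complementary contraction $\psi\colon X \to T$ such that $(\phi,\psi)\colon X \to Y \times T$ is finite and surjective.

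It remains to show $(\phi,\psi)$ is an isomorphism and to invoke the induction on $T$. A general fiber $F = \psi^{-1}(t)$ is $\mathbb{Q}$-Fano with Gorenstein canonical singularities of dimension $\dim Y \ge 3$ (using equidimensionality of $\psi$, Bertini, and the locally factorial canonical hypothesis on $X$), and $\phi|_F \colon F \to Y$ is finite; Theorem \ref{thm_intro:finite_onto_double_cover} then forces $\deg \phi|_F = 1$, hence $(\phi,\psi)$ is birational. Being a finite birational morphism between normal varieties, $(\phi,\psi)$ is an isomorphism, so $X \cong Y \times T$. As $T$ inherits $\Nef = \Pef$ (Lemma \ref{lemma:image_nef_egal_psef}), $\rho(T) = \dim T$ (Lemma \ref{lemma:nef_pseff_picard}(2)), local factoriality (Corollary \ref{corollary:factoriality_contraction}) and canonicity (via the product structure), the induction applied to $T$ yields its decomposition as a product of double covers, and hence the claimed decomposition of $X$. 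The main obstacle is the last case: one must arrange the contraction $\phi$ so that the branch components of $Y$ are actually ample (otherwise Theorem \ref{thm_intro:finite_onto_double_cover} is unavailable), and one must check that a general fiber of the complementary $\psi$ really is a Gorenstein canonical $\mathbb{Q}$-Fano variety of the right dimension, both of which involve tracking the decomposition of branch divisors under the induction.
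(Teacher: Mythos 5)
Your proposal follows the same overall architecture as the paper's proof: reduce the boundary to the variety itself, argue by induction on dimension, handle the base case via Proposition \ref{proposition:nef_psef_surfaces}, split into the case where every contraction target is a product of $\p^1$'s (resolved by Proposition \ref{proposition:intro}) and the case where some contraction image is a non-trivial double cover (resolved by producing a complementary contraction, restricting to a general fiber, and applying Theorem \ref{thm_intro:finite_onto_double_cover}). Most of this is correct and matches the paper, including the use of Lemmata \ref{lemma:nef_pseff_picard}, \ref{lemma:image_nef_egal_psef}, \ref{lemma:target_log_Fano}, \ref{lemma:contraction}, and Corollary \ref{corollary:factoriality_contraction}.

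There is, however, a genuine gap in your handling of Case 2 at the point where you invoke Theorem \ref{thm_intro:finite_onto_double_cover}. That theorem requires that every irreducible component of the branch divisor of $Y \to (\p^1)^k$ be ample, and you propose to secure this by \emph{``further refining the product decomposition by splitting off sub-factors coming from pieces of the branch divisor pulled back from subproducts.''} This does not work: a double cover branched along a reducible divisor of type $(2,\ldots,2)$ whose components come from different sub-products is not a product of smaller double covers (the resulting product would be a degree-four, not degree-two, cover), and in any case there is no reason the decomposition you get from the induction on $Y$ should cooperate. Moreover, the troublesome configuration can have one ample and one non-ample component, which your ``splitting off'' does not address. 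The correct mechanism, and the one the paper uses, is local factoriality: $Y$ is locally factorial by Corollary \ref{corollary:factoriality_contraction}, and Lemma \ref{lemma:factoriality_hypersurface_ring} then forces the branch divisor $C$ to be \emph{irreducible} (any two distinct prime components of a type $(2,\ldots,2)$ divisor must meet somewhere, and at such a point the coprime local equations would destroy factoriality of the cover). An irreducible divisor of type $(2,\ldots,2)$ has all multidegrees positive and is therefore automatically ample, so the hypothesis of Theorem \ref{thm_intro:finite_onto_double_cover} holds with no refinement needed. Once you replace your ``refining'' step with this irreducibility observation, the argument closes.
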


\begin{proof}Notice that $B\in\Nef(X)$.
Thus $X$ is a $\mathbb{Q}$-Fano variety with locally factorial canonical singularities (see \cite[Corollary 2.35]{kollar_mori}). 
To prove Theorem \ref{thm:main_log_version}, we argue by induction on $n=\dim X$. By Lemma \ref{lemma:nef_pseff_picard}, the Mori cone
$\NE(X)$ is simplicial, and $\dim \NE(X) = n$ by assumption.

If $n=2$, then the result follows from Proposition \ref{proposition:nef_psef_surfaces}.

Suppose that $n \ge 3$.

Let $V$ be a face of $\NE(X)$, and denote by $\phi\colon X \to Y$ the corresponding contraction. Then $\phi$ is equidimensional and $\dim Y=n - \dim V$ by Lemma \ref{lemma:nef_pseff_picard}. 
From Lemma \ref{lemma:image_nef_egal_psef}, we infer that $\Nef(Y)=\Pef(Y)$.
By Lemma \ref{lemma:target_log_Fano}, there exists an effective $\mathbb{Q}$-divisor $B_Y$ on $Y$ such that 
$(Y,B_Y)$ is $\mathbb{Q}$-Fano with log terminal singularities. Finally,
by Corollary \ref{corollary:factoriality_contraction}, $Y$ is locally factorial. Arguing as above, we conclude that
$Y$ is a $\mathbb{Q}$-Fano variety with canonical singularities. Hence, $Y$ satisfies the conclusion of Theorem \ref{thm:main}
provided that $\dim Y < \dim X$, or equivalently $\{0\}\subsetneq V\subsetneq \NE(X)$ by Lemma \ref{lemma:nef=psef_contractions}.

Suppose first that there is a face $\{0\}\subsetneq V\subsetneq \NE(X)$ such that 
$Y$ is a double cover of $(\p^1)^{m}$
branched along a divisor $C$ of type
$(2,\ldots,2)$ with $\dim Y= m \ge 2$. 
Notice that $C$ is reduced by \ref{say:double_cover} and
irreducible by Lemma \ref{lemma:factoriality_hypersurface_ring}.
If $m=2$, then $Y$ is a Del Pezzo surface of degree $4$ satisfying 
$\Nef(Y)=\Pef(Y)$. But this contradicts
Proposition \ref{proposition:nef_psef_surfaces}, and therefore, we must have $\dim Y = m \ge 3$.
Let $W$ be the face of $\NE(X)$ such that 
$V+W=\NE(X)$ and $\dim W = n- \dim V = m$. 
Denote by $\psi\colon X \to T$ the contraction of $W$, and consider
the finite morphism $f=(\phi,\psi) \colon X \to Y \times T$. 
Let $Z$ be a general fiber of $\psi$. Then $Z$ is a $\mathbb{Q}$-Fano variety with Gorenstein canonical singularities, and the restriction of $f$ to $Z$ induces a finite morphism
$f_{|Z}\colon Z \to Y$ with $\deg(f_{|Z})=\deg(f)$. 
By Theorem \ref{thm_intro:finite_onto_double_cover}, we conclude that 
$\deg(f_{|Z})=1$. The theorem follows easily in this case. 

Suppose now that, for each face $\{0\} \subsetneq V\subsetneq \NE(X)$,
we have $Y \cong (\p^1)^m$ for some $m \ge 1$.
Then apply Proposition \ref{proposition:intro} to conclude
that, either $X \cong (\p^1)^n$, or $X$ is a double cover of 
$(\p^1)^n$ branched along a reduced divisor of type $(2,\ldots,2)$.
This completes the proof of the theorem.
\end{proof}

The following example shows that Theorem \ref{thm:main} does not hold for $\mathbb{Q}$-Fano varieties with
Gorenstein canonical singularities. 

\begin{exmp}\label{exmp:quotient_singularities}
Fix integers $m \ge 2$ and $n \ge 2$, and let $\zeta$ be a primitive $m^{\textup{th}}$ root of unity. 
The group $G=\langle \zeta \rangle$ acts on $\p^1\times\cdots\times\p^1$ by 
$$\zeta\cdot (x_1,y_1)\times\cdots\times (x_n,y_n)=(x_1,\zeta y_1)\times\cdots\times (x_n,\zeta y_n).$$
Set $X=(\p^1\times\cdots\times\p^1)/G$. 
Then $X$ has $\mathbb{Q}$-factorial log terminal singularities and Picard number $\rho(X) \le \dim X =n $ by 
\ref{pullback}, and satisfies $\NE(X)=\Pef(X)$. Notice that $X$ admits a finite morphism onto 
$(\p^1/G)\times\cdots\times(\p^1/G)\cong \p^1\times\cdots\times \p^1$. Thus $\rho(X)\ge\dim X$, and hence $\rho(X)=\dim X$. 
The variety $X$ has isolated singularities and the natural morphism 
$\p^1\times\cdots\times \p^1 \to X$ is \'etale over $X \setminus \Sing(X)$. This implies that 
$X$ is a $\mathbb{Q}$-Fano variety.
We claim that $X$ is not locally factorial. Notice that the open subset 
$U=\{x_1\cdots x_n\neq 0\}\cong \mathbb{A}^n \subset \p^1\times\cdots\times\p^1$
is $G$-stable.
Set $W=U/G$ with natural morphism $\pi\colon U \to W$. We claim that the ideal sheaf of the reduced 
hypersurface $H=\pi(\{y_1=0\})$
is not locally free at $\pi\big((0,\ldots,0)\big)$. Suppose to the contrary that 
$\sI_H$ is invertible at $(0,\ldots,0)$.
The functions $y_1y_2^{m-1}$ and $y_1^m$ on $U$ are $G$-invariant, and hence there exist functions $g_1$ and $g_2$ on $W$ such that 
$y_1y_2^{m-1}=g_1\circ\pi$ and $y_1^m=g_2\circ\pi$. 
Then $\sI_{H,(0,\ldots,0)}=g_1\sO_{W,(0,\ldots,0)}$ and 
$g_2\in \sI_{H,(0,\ldots,0)}^m=g_1^m\sO_{W,(0,\ldots,0)}$
since $y_1y_2^{m-1}=g_1\circ\pi$ and $y_1^m=g_2\circ\pi$ vanish at order 
$1$ and $m$ respectively along $\{y_1=0\}$. This in turn implies
that $y_1^m=g_2\circ\pi$ vanishes at order $\ge m(m-1)$ along $\{y_2=0\}$, yielding a contradiction.
This proves that $X$ is not locally factorial.
Finallly, by \cite{watanabe}, $X$ is Gorenstein if and only if $n-2k\equiv 0 \, [m]$ for any 
$0 \le k \le n$, e.g. $n$ is even and $m=2$.

\medskip

Suppose that $n \ge 3$. Then $X$ does not satisfy the conclusion of Theorem \ref{thm:main}. To prove the claim, we argue by contradiction, and assume that $X \cong X_1 \times \cdots \times X_s$
where $X_i$ is a double cover of 
$\p^1\times\cdots\times\p^1$ branched along a reduced divisor of type
$(2,\ldots,2)$ for each $i \in \{1,\ldots,s\}$. Since $X$ has isolated singularities, we must have $s=1$. 
By Lemma \ref{lemma:pi_1_revetement_double}, we conclude that $\pi_1\big(X\setminus \Sing(X)\big)=\{1\}$, yielding a contradiction
since $\pi_1\big(X\setminus \Sing(X)\big)\cong G$.
\end{exmp}

The following proposition is a first step towards the classification of Fano manifolds $X$ with $\Nef(X)=\Pef(X)$ and arbitrary Picard number.

\begin{prop}\label{proposition:main2}
Let $(X,B)$ be a $\mathbb{Q}$-Fano pair 
with locally factorial canonical singularities satisfying $\Nef(X)=\Pef(X)$. Suppose that there is a contraction $\phi\colon X \to Y$ with $\dim Y = \rho (Y)$ and
$Y\not\cong \p^1\times\cdots\times\p^1$.
Then $X \cong Y_1 \times Y^1$ where $Y_1$ and $Y^1$ are positive dimensional  
$\mathbb{Q}$-Fano varieties with locally factorial canonical singularities,
$\Nef(Y_1)=\Pef(Y_1)$, and $\rho(Y_1)=\dim Y_1$.
\end{prop}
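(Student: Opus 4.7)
My plan is to descend the product structure that Theorem \ref{thm:main_log_version} imposes on $Y$ to one on $X$, using Theorem \ref{thm_intro:finite_onto_double_cover} as the rigidity input.

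First I would transfer the hypotheses from $X$ to $Y$. By Lemma \ref{lemma:image_nef_egal_psef}, $\Nef(Y)=\Pef(Y)$; by Lemma \ref{lemma:target_log_Fano}, there is an effective $\mathbb{Q}$-divisor $B_Y$ with $(Y,B_Y)$ a $\mathbb{Q}$-Fano pair with log terminal singularities; by Corollary \ref{corollary:factoriality_contraction}, $Y$ is locally factorial. Arguing as at the opening of the proof of Theorem \ref{thm:main_log_version}, these data force $Y$ itself to be a $\mathbb{Q}$-Fano variety with locally factorial canonical singularities, and since $\rho(Y)=\dim Y$, Theorem \ref{thm:main_log_version} provides a decomposition $Y\cong Z_1\times\cdots\times Z_m$ with each $Z_i$ a double cover of some $(\p^1)^{n_i}$ branched along a reduced divisor of type $(2,\ldots,2)$. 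Because $Y\not\cong(\p^1)^{\dim Y}$, at least one $Z_i$ is a genuine double cover; relabel it $Y_1$. Local factoriality descends from $Y$ to $Y_1$, so combining Corollary \ref{cor:double_cover_factorial} with the fact that a smooth Del Pezzo surface of degree $4$ has Picard number $6\neq 2$ rules out $\dim Y_1=2$, giving $\dim Y_1\ge 3$; Lemma \ref{lemma:factoriality_hypersurface_ring} then forces the branch divisor of $Y_1\to(\p^1)^{\dim Y_1}$ to be irreducible, hence a single ample divisor.

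Next, I would compose $\phi$ with the projection $Y\to Y_1$ to produce a contraction $\phi_1\colon X\to Y_1$ with $\dim Y_1=\rho(Y_1)$, and invoke Lemma \ref{lemma:contraction} to obtain another contraction $\psi\colon X\to Y^1$ such that $f:=(\phi_1,\psi)\colon X\to Y_1\times Y^1$ is finite and surjective, with both $\phi_1$ and $\psi$ equidimensional. A general fiber $F$ of $\psi$ is then, by local factoriality of $X$ together with adjunction for the equidimensional contraction $\psi$, a $\mathbb{Q}$-Fano variety of dimension $\dim Y_1\ge 3$ with Gorenstein canonical singularities, and the restriction $f|_F\colon F\to Y_1$ is finite of degree $\deg(f)$. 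Since the branch divisor of $Y_1\to(\p^1)^{\dim Y_1}$ is irreducible and ample, Theorem \ref{thm_intro:finite_onto_double_cover} forces $\deg(f)=1$; thus the finite birational morphism $f$ between normal varieties is an isomorphism, and $X\cong Y_1\times Y^1$.

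Finally, both $Y_1$ and $Y^1$ are $\mathbb{Q}$-Fano with locally factorial canonical singularities as factors of $X$; Lemma \ref{lemma:image_nef_egal_psef} gives $\Nef(Y_1)=\Pef(Y_1)$, and $\rho(Y_1)=\dim Y_1$ by construction; positive dimensionality of $Y^1$ follows from the non-triviality of $\phi$ (so that $\dim X>\dim Y\ge \dim Y_1$). The step I expect to require the most care is the verification that the general fiber $F$ of $\psi$ is $\mathbb{Q}$-Fano with Gorenstein canonical singularities, since this is precisely the hypothesis on which Theorem \ref{thm_intro:finite_onto_double_cover} rests; this is the standard but nontrivial adjunction step and will be the main technical hurdle beyond routine bookkeeping.
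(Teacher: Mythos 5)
Your argument reproduces the paper's proof essentially step for step: transfer hypotheses to $Y$ via Lemmata \ref{lemma:image_nef_egal_psef}, \ref{lemma:target_log_Fano} and Corollary \ref{corollary:factoriality_contraction}; apply Theorem \ref{thm:main_log_version} to get the product decomposition of $Y$; isolate a factor $Y_1$ that is a genuine double cover; produce the complementary contraction $\psi\colon X\to Y^1$ via Lemma \ref{lemma:contraction}; and invoke Theorem \ref{thm_intro:finite_onto_double_cover} on a general fiber of $\psi$ to conclude $\deg(f)=1$.

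The one place you deviate is the exclusion of $\dim Y_1=2$. You combine Corollary \ref{cor:double_cover_factorial} (local factoriality forces smoothness) with the claim that a smooth degree-$4$ Del Pezzo has Picard number $6\neq 2$ -- but nothing at that point has been established that forces $\rho(Y_1)=2$, so as stated the comparison does not produce a contradiction on its own. The paper's route is cleaner: $Y_1$ is locally factorial (Corollary \ref{corollary:factoriality_contraction} applied to $X\to Y_1$) and satisfies $\Nef(Y_1)=\Pef(Y_1)$ (Lemma \ref{lemma:image_nef_egal_psef}), so Proposition \ref{proposition:nef_psef_surfaces} forces $Y_1\cong\p^2$ or $\p^1\times\p^1$, contradicting that $Y_1$ is a genuine double cover. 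Your version can be repaired without the Picard-number detour -- a smooth degree-$4$ Del Pezzo has $(-1)$-curves and hence $\Nef\neq\Pef$, directly contradicting $\Nef(Y_1)=\Pef(Y_1)$ -- but the point is that the contradiction lives in the nef versus pseudoeffective comparison rather than in a Picard-number count.
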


\begin{proof}Notice that $B\in\Nef(X)$. 
Therefore, $X$ is a $\mathbb{Q}$-Fano variety with locally factorial canonical singularities. 
By Lemma \ref{lemma:target_log_Fano}, $\Nef(Y)=\Pef(Y)$, and
there exists an effective $\mathbb{Q}$-divisor $B_Y$ on $Y$ such that 
$(Y,B_Y)$ is $\mathbb{Q}$-Fano with log terminal singularities. From Corollary \ref{corollary:factoriality_contraction}, we deduce that $Y$ is locally factorial. 
Thus Theorem \ref{thm:main_log_version} applies to $(Y,B_Y)$.
We have $Y \cong Y_1 \times \cdots \times Y_m$
where $Y_i$ is a double cover of 
$\p^1\times\cdots\times\p^1$ branched along a reduced divisor $B_i$ of type
$(2,\ldots,2)$ for all
$1 \le i \le m$.
Since $Y\not\cong \p^1\times\cdots\times\p^1$, we may assume without loss of generality that 
$\dim Y_1 \ge 2$. 
From Corollary \ref{corollary:factoriality_contraction} applied to $X \to Y_1$, we deduce that $Y_1$ is locally factorial. 
Therefore, by Proposition \ref{proposition:nef_psef_surfaces}, we must have $\dim Y_1\ge 3$.
Notice that $B_1$ is
irreducible by Lemma \ref{lemma:factoriality_hypersurface_ring}.

By Lemma \ref{lemma:contraction} applied to $\varphi_1\colon X \to Y_1$, there exists a contraction 
$\varphi^1\colon X \to Y^1$ such that the induced morphism 
$\varphi=(\varphi_1,\varphi^1)\colon X \to Y_1\times Y^1$ is surjective and finite.
Let $Z$ be a general fiber of $\varphi^1$. Then $Z$ is a $\mathbb{Q}$-Fano variety with Gorenstein canonical singularities. Moreover, the restriction of $\varphi_1$ to $Z$ induces a finite morphism
$f\colon Z \to Y_1$ with $\deg(f)=\deg(\varphi)$. 
From Theorem \ref{thm_intro:finite_onto_double_cover}, we conclude that
$\deg(\varphi)=1$, completing the proof of the proposition.
\end{proof}

\begin{rem}In the setup of Proposition \ref{proposition:main2}, $Y_1$ satisfies the conclusion of Theorem \ref{thm:main}.
\end{rem}

\section{Finite morphisms between smooth Del Pezzo surfaces}\label{section:finite_morphisms_surfaces}

In order to prove Theorem \ref{thm:main}, we were led to study finite morphisms between $\mathbb{Q}$-Fano varieties. In this section
we address finite morphisms between smooth Del Pezzo surfaces. 
We believe that these results are interesting on their own and that they will be useful when considering 
Fano manifolds with $\Nef(X)=\Pef(X)$ and arbitrary Picard number.

Beauville classified smooth Del Pezzo surfaces which admit an endomorphism of
degree $>1$ in \cite{beauville_endo}. 
We will consider a finite morphism  
$f\colon S \to T$ of degree $>1$ between
smooth Del Pezzo surfaces $S$ and $T$ with $K_S^2\neq K_T^2$, or equivalently $K_S^2<K_T^2$.
We will show that we must have $K_T^2 \ge 8$ (see Theorem \ref{thm:morphism_Del_pezzo_surface}). A smooth Del Pezzo surface of degree 8 
(respectively, 9) is isomorphic to $\p^1\times\p^1$ or to $\p^2$ blown up at one point
(respectively, $\p^2)$). 
Notice that any projective surface admits a finite morphism 
of degree $>1$ onto $\p^2$ by Noether Normalization Lemma. 
The classification of smooth Del Pezzo surfaces which admit a finite morphism of degre $>1$ onto $\p^1\times\p^1$ 
follows easily from
Proposition \ref{proposition:finite_morphism_quadric}.

\begin{say}[Smooth Del Pezzo surfaces]
It is well known that a smooth Del Pezzo surface is either $\p^1\times\p^1$
or the blow up of $\p^2$ at $1 \le r \le 8$ points in general position, namely, no three points on a line, no six points on a conic, and if $r=8$, not all of them on a cubic with one of the point being a singular point.
\end{say}

We will make use of the following elementary observation.

\begin{lemma}\label{lemma:conic_bundle_as_blow_up}
Let $S\not\cong\p^1\times\p^1$ be a smooth Del Pezzo surface, and let $f_1 \colon S \to \p^1$ be a conic bundle structure on $S$. Set $r=9-K_S^2$.
Then there are $r$ points in general position
$p_1,\ldots,p_r$ in $\p^2$ such that the following holds. The surface
$S$ is the blow up of $\p^2$ in 
$p_1,\ldots,p_r$ and
$f_1$ is induced by the pencil of lines in $\p^2$ passing through $p_1$.
\end{lemma}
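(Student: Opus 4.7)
The strategy is to construct a birational morphism $\rho \colon S \to \p^2$ realizing $S$ as the blow up of $\p^2$ at points $p_1,\ldots,p_r$ in general position, such that the fiber class $F$ of $f_1$ equals $H - E_1$, where $H = \rho^*\sO_{\p^2}(1)$ and $E_1$ is the exceptional divisor over $p_1$; then $f_1$ is automatically the pencil of lines through $p_1$.

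The crucial first step is to produce a $(-1)$-curve $E_1 \subset S$ with $E_1 \cdot F = 1$. The Euler characteristic formula gives $\chi(S) = 4 + \#\{\text{reducible fibers of }f_1\}$, so $f_1$ has exactly $r - 1$ reducible fibers, each of the form $G + G'$ where $G,G'$ are $(-1)$-curves meeting transversally at a single point. Contracting one component per reducible fiber yields a birational morphism $\pi \colon S \to S'$ onto a $\p^1$-bundle $S' \cong \mathbb{F}_e$ with $\rho(S')=2$ and $K_{S'}^2=8$. For any irreducible section $\Sigma' \subset \mathbb{F}_e$ passing through $k$ of the blown-up points, its strict transform $\tilde\Sigma$ on $S$ satisfies $\tilde\Sigma^2 = \Sigma'^2 - k$. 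Since every irreducible curve on a Del Pezzo surface has self-intersection $\ge -1$ (genus formula plus ampleness of $-K_S$), applying this with $\Sigma'$ the negative section of $\mathbb{F}_e$ (for $e\ge 1$), respectively a ruling through exactly one blown-up point in $\p^1\times\p^1$ (for $e=0$; such a ruling exists because at most one blown-up point can lie on a single ruling, by the same inequality), forces $e \le 1$ and simultaneously produces a section whose strict transform on $S$ is a $(-1)$-curve, call it $E_1$.

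Having secured $E_1$, for $i = 2,\ldots,r$ let $E_i$ denote the component of the $i$th reducible fiber not meeting $E_1$ (well-defined since $E_1 \cdot (G_i + G'_i) = 1$). Then $E_1, E_2, \ldots, E_r$ are mutually disjoint $(-1)$-curves. Contracting $E_2, \ldots, E_r$ gives $\pi_1 \colon S \to S_1$ with $\rho(S_1) = 2$ and $K_{S_1}^2 = 8$; since $E_1$ is disjoint from the contracted locus, its image $\bar E_1 \subset S_1$ is a smooth rational curve with $\bar E_1^2 = -1$, forcing $S_1 \cong \mathbb{F}_1$. Contracting $\bar E_1$ yields $\pi_2 \colon S_1 \to \p^2$; set $\rho = \pi_2 \circ \pi_1$ and $p_i = \rho(E_i)$, which are automatically in general position since $S$ is a smooth Del Pezzo surface.

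Finally, to verify $F = H - E_1$: since $E_i \cdot F = 0$ for $i \ge 2$, the class $F$ is pulled back from $S_1$, so $F = \pi_1^* F_1$ with $F_1 = \pi_{1,*}F$. On $S_1 = \mathbb{F}_1$ with Picard basis $(H_1,\bar E_1)$ where $H_1 = \pi_2^* \sO_{\p^2}(1)$, the conditions $F_1^2 = 0$, $-K_{S_1} \cdot F_1 = 2$, and $F_1$ nef uniquely determine $F_1 = H_1 - \bar E_1$; pulling back gives $F = H - E_1$. The main obstacle is the existence step in paragraph two, which requires a careful interplay between the section geometry on the intermediate $\p^1$-bundle $\mathbb{F}_e$ and the Del Pezzo constraint that every irreducible curve on $S$ has self-intersection at least $-1$.
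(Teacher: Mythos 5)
Your proof is correct and follows essentially the same route as the paper's: pass to a relative minimal model $\mathbb{F}_e$ of $f_1$, bound $e \le 1$ by applying the constraint that irreducible curves on a smooth Del Pezzo surface have self-intersection $\ge -1$ to the minimal section, and then descend to $\p^2$. Your reorganization -- first securing a $(-1)$-curve $E_1$ that is a section of $f_1$, then contracting precisely the fiber components disjoint from $E_1$ so as to land on $\mathbb{F}_1$ -- is a slightly cleaner way to handle the endgame than the paper's separate treatment of the $\mathbb{F}_0$ and $\mathbb{F}_1$ cases, but the underlying ideas coincide.
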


\begin{proof}Notice that $\rho(S)=10-K_S^2=r+1$.
Let $\mu \colon S \to S_1$ be a minimal model of $f_1 \colon S \to \p^1$, and denote
the exceptional prime curves of $\mu$ by $E_1,\ldots, E_{r-1}$. Then 
$S_1 \cong \mathbb{F}_m$ for some integer $m \ge 0$, and $f_1$ is induced by the natural morphism
$\mathbb{F}_m \to \p^1$. 
Let $\sigma_1 \subset S_1$ be a minimal section, and let $\sigma$ be the strict transform of $\sigma_1$ in $S$. Then 
$\sigma^2 \le \sigma_1^2 = -m$, and hence $m \le 1$ since the only curves with negative square on a smooth Del Pezzo surface are the exceptional ones.

Suppose first that $S_1 \cong \mathbb{F}_1$. Notice that
$\sigma_1 \cap \mu(E_i)=\emptyset$ for all $1 \le i\le r-1$. 
Consider the blow down $\nu \colon S_1 \to \p^2$ of $\sigma_1$, and 
set $\mu_1=\nu\circ\mu \colon S \to \p^2$. Then $S$ is obtained by blowing up  
$\mu_1(E_1),\ldots,\mu_1(E_{r-1})$, and $\nu(\sigma_1)$ in $\p^2$, and  
$f_1$ is induced by the pencil of lines in $\p^2$ passing through $\nu(\sigma_1)$.

Suppose now that $S_1 \cong \p^1\times\p^1$. Since $S \not\cong \p^1\times\p^1$, 
we must have $r\ge 2$. Let $S_0$ be the blow up of $S_1$ at 
$\mu(E_1)$, and let $F_1$, $F_2$ and $F_3$ be the exceptional curves
on $S_0$. Up to renumbering the $F_i$'s if necessary, we may assume that $F_1$ is contracted by the natural morphism $S_0 \to S_1$. 
Let $\nu\colon S_0 \to \p^2$ be the blow down of $F_2$ and $F_3$. Denote by 
$\mu_1\colon S \to \p^2$ the natural morphism.
Then $S$ is obtained by blowing up 
$\mu_1(E_{2}),\ldots,\mu_1(E_{r-1}),\nu(F_2)$, and $\nu(F_3)$ in $\p^2$. Moreover, 
up to renumbering the $F_i$'s if necessary, $f_1$ is induced by the pencil of lines in $\p^2$ passing through $\nu(F_2)$.
\end{proof}

\begin{lemma}\label{lemma:degree2_quadric}
Let $S$ be a smooth Del Pezzo surface of degree $K_S^2 = 2$, and 
let $f_1\colon S \to \p^1$ and $f_2\colon S \to \p^1$ be two conic bundle structures on $S$.
Suppose that $f=(f_1,f_2) \colon S \to \p^1\times\p^1$ is finite.
Then there exist $7$ points $p_1,\ldots,p_7$ in general position in $\p^2$ such that 
$S$ is the blow up of $\p^2$ at $p_1,\ldots,p_7$, and such that
$f_1$ is induced by the pencil of lines in $\p^2$ passing through $p_1$. Moreover, one of the following holds.

\begin{itemize}
\item $f_2$ is induced by the pencil of cubics passing through $p_2,\ldots,p_7$ with a point of multiplicity
$2$ at $p_7$, and $\deg(f)=3$. 
\item $f_2$ is induced by the pencil of quartics passing through $p_1,\ldots,p_7$ with a point of multiplicity $2$ at $p_5,p_6$, and $p_7$, and $\deg(f)=3$. 
\item $f_2$ is induced by the pencil of quintics passing through $p_1,\ldots,p_7$ with a point of multiplicity $2$ at 6 out of $p_1,\ldots,p_7$, and either $\deg(f)=4$ or $\deg(f)=3$ according to whether or not $p_1$ is a multiple point.
\end{itemize}
Conversely, given $S$, $f_1$, and $f_2$ as above, $f=(f_1,f_2)$ is a finite morphism.
\end{lemma}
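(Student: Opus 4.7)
Since $K_S^2=2<8$ forces $S\not\cong\p^1\times\p^1$, the first move is to apply Lemma~\ref{lemma:conic_bundle_as_blow_up} to $f_1$, realizing $S$ as the blow-up of $\p^2$ at seven points $p_1,\ldots,p_7$ in general position with $F_1:=f_1^*\sO_{\p^1}(1)=h-E_1$ in $\Pic(S)$, where $h$ is the pull-back of a line and $E_i$ is the exceptional divisor over $p_i$. Writing $F_2:=f_2^*\sO_{\p^1}(1)=dh-\sum_{i=1}^{7}m_iE_i$, nefness of $F_2$ forces $d\ge 1$ and $m_i=F_2\cdot E_i\ge 0$, while the conditions $F_2^2=0$ and $-K_S\cdot F_2=2$ translate to
\[
\sum_{i=1}^{7}m_i^2=d^2,\qquad \sum_{i=1}^{7}m_i=3d-2.
\]
The Cauchy--Schwarz inequality bounds $(3d-2)^2\le 7d^2$, hence $d\le 5$, and a brief enumeration in non-negative integers shows that $(m_i)$ is a permutation of $(1,0,0,0,0,0,0)$, $(1,1,1,1,0,0,0)$, $(2,1,1,1,1,1,0)$, $(2,2,2,1,1,1,1)$, or $(2,2,2,2,2,2,1)$ according as $d=1,2,3,4,5$.

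The second step is the finiteness test. The morphism $f=(f_1,f_2)$ fails to be finite exactly when some irreducible curve $C$ satisfies $F_1\cdot C=F_2\cdot C=0$; a smooth fiber of $f_1$ has $F_2$-degree $\deg(f)=F_1\cdot F_2=d-m_1$, so the test reduces to the twelve $(-1)$-curves $E_j$ and $L_{1j}:=h-E_1-E_j$ for $j=2,\ldots,7$ that form the reducible fibers of $f_1$, via the formulas $F_2\cdot E_k=m_k$ and $F_2\cdot L_{1k}=d-m_1-m_k$. I expect the outcome to be: for $d=1,2$ some $E_k$ with $k\ne 1$ has $m_k=0$, so $f$ is never finite; for $d=3$ finiteness forces the unique zero in $(m_i)$ to sit at index $1$, so $F_2=3h-2E_j-\sum_{i\in\{2,\ldots,7\}\setminus\{j\}}E_i$ with $\deg(f)=3$; for $d=4$ finiteness forces $m_1=1$ (otherwise $m_1=m_k=2$ for some $k\ne 1$ and then $F_2\cdot L_{1k}=0$), giving three double points among $p_2,\ldots,p_7$ and $\deg(f)=3$; and for $d=5$ every configuration is finite, with $\deg(f)=4$ if $m_1=1$ and $\deg(f)=3$ if $m_1=2$. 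After relabeling $p_2,\ldots,p_7$, these three surviving families are exactly the three pencils listed in the statement.

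For the converse, one takes $S$, $f_1$, $f_2$ of one of the listed forms, reads off the explicit multiplicity vector $(m_i)$, and verifies using the same formulas that $F_2\cdot E_k=m_k\ge 1$ and $F_2\cdot L_{1k}=d-m_1-m_k\ge 1$ in every case, so that no irreducible curve is contracted by both $f_1$ and $f_2$, whence $f=(f_1,f_2)$ is finite. The main obstacle is purely combinatorial bookkeeping in the enumeration and the finiteness check of Step~2 — in particular, ruling out the numerically admissible but non-finite configurations with $m_1>0$ in the $d=3,4$ cases — but each individual verification is a one-line intersection computation on $S$.
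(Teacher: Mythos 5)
Your proposal is correct, and it takes a genuinely different route from the paper's. The paper appeals to the classification of conic bundle structures on a degree-two Del Pezzo surface from Dolgachev's book (\cite[8.7.1]{dolgachev_book}), which immediately hands over the seven possible pencils for $f_2$, and then rules out the bad ones by exhibiting an explicit curve (\(\ell_2=h-E_1-E_2\), or \(F_6\)) contracted by both projections. You instead derive the classification from scratch: writing \(F_2=dh-\sum m_iE_i\), the two numerical conditions \(F_2^2=0\) and \(-K_S\cdot F_2=2\) plus Cauchy--Schwarz give \(d\le 5\), and a short non-negative-integer enumeration recovers exactly the five multiplicity patterns. From there the finiteness test — positivity of \(F_2\) on the twelve components \(E_j\), \(L_{1j}=h-E_1-E_j\) of the reducible fibers of \(f_1\), together with \(\deg(f)=d-m_1\ge1\) — is the same intersection-theoretic check the paper performs for the converse direction. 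Your version is more self-contained (no external reference needed for the list of pencils), at the cost of a slightly longer but entirely routine enumeration; the paper's is shorter but offloads the classification to Dolgachev. One small remark: as stated, the paper's third bullet reads ``\(\deg(f)=4\) or \(\deg(f)=3\) according to whether or not \(p_1\) is a multiple point,'' which under the natural reading reverses the true correspondence; your computation \(\deg(f)=5-m_1\), giving \(\deg(f)=4\) when \(m_1=1\) and \(\deg(f)=3\) when \(m_1=2\), is the correct one and agrees with what the paper's proof actually shows. Both approaches silently use (as does the lemma's hypothesis) that the listed pencils really do define morphisms to \(\p^1\); since the hypothesis already posits two conic bundle structures, this is not a gap.
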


\begin{proof}By Lemma \ref{lemma:conic_bundle_as_blow_up},
there exist $7$ points $p_1,\ldots,p_7$ in general position in $\p^2$ such that 
$S$ is the blow up of $\p^2$ at $p_1,\ldots,p_7$, and such that
$f_1$ is induced by the pencil of lines in $\p^2$ passing through $p_1$.
The reducible fibers of $f_1$ are the reducible curves $\ell_i\cup F_i$ for $i=2,\ldots,7$,
where $\ell_i$ denotes the strict transform
in $S$ of the line connecting $p_1$ and $p_i$, and $F_1,\ldots, F_7$ denote the exceptional curves in $S$ over
the $p_i$'s.
By \cite[8.7.1, p. 454]{dolgachev_book}, up to renumbering the $p_i$'s if necessary, one of the following holds.
\begin{enumerate}
\item $f_2$ is induced by the pencil of lines passing through $p_2$.
\item $f_2$ is induced by the pencil of conics passing through 4 out of $p_1,\ldots,p_5$.
\item $f_2$ is induced by the pencil of cubics passing through $p_1,\ldots,p_6$ with a point of multiplicity
$2$ at $p_1$ or $p_2$.
\item $f_2$ is induced by the pencil of cubics passing through $p_2,\ldots,p_7$ with a point of multiplicity
$2$ at $p_7$.
\item $f_2$ is induced by the pencil of quartics passing through $p_1,\ldots,p_7$ with a point of multiplicity $2$ at $p_1,p_2$, and $p_3$.
\item $f_2$ is induced by the pencil of quartics passing through $p_1,\ldots,p_7$ with a point of multiplicity $2$ at $p_5,p_6$, and $p_7$.
\item $f_2$ is induced by the pencil of quintics passing through $p_1,\ldots,p_7$ with a point of multiplicity $2$ at 6 out of $p_1,\ldots,p_7$.
\end{enumerate}
In cases (1), (3), and (5), $f_1$ and $f_2$ contract $\ell_2$. This contradicts the fact that $f$ is finite.
In case (2), $f_1$ and $f_2$ contract $F_6$, yielding again a contradiction. Therefore, $f_2$ satisfies 
one of the conditions in the statement of the lemma.

Conversely, let $S$ be the blow-up of $\p^2$ in 
$7$ points $p_1,\ldots,p_7$ in $\p^2$ in general position. 
Let $f_1$ be the conic bundle structure on $S$ induced by 
the pencil of lines in $\p^2$ passing through $p_1$, and let $f_2$ be a conic bundle structure on $S$
satisfying any of the conditions in the statement of the lemma. Set
$f=(f_1,f_2) \colon S \to \p^1\times\p^1$. To prove that $f$ is finite, we have to check that there is no curve on $S$ contracted by both $f_1$ and $f_2$. 
Denote by $C(m,m_{i_1}p_{i_1},\ldots,m_{i_s}p_{i_s})$ the strict transform in $S$ of an integral plane curve of degree $m$ passing through $p_{i_1},\ldots,p_{i_s}$ with multiplicities $m_{i_1},\ldots,m_{i_s}$ respectively, where $\{i_1,\ldots,i_s\}\subset \{1,\ldots,7\}$.

Suppose first that we are in case (4) in the statement of Lemma \ref{lemma:degree2_quadric}.
Then the reducible fibers of $f_2$ are the curves
$C(3,p_1,\ldots,p_6,2p_7)\cup F_1$ and
$C(2,p_i,p_j,p_k,p_l,p_7)\cup C(1,p_m,p_7)$ where 
$\{i,j,k,l,m\}=\{2,3,4,5,6\}$. It follows that $f$ is finite in this case.

Suppose now that we are in case (6) in the statement of Lemma \ref{lemma:degree2_quadric}. 
Then the reducible fibers of $f_2$ are the curves
$C(3,p_1,\ldots,p_4,2p_i,p_j,p_k)\cup C(1,p_j,p_k)$
where $\{i,j,k\}=\{5,6,7\}$, together with the curves
$C(2,p_i,p_j,p_5,p_6,p_7) \cup C(2,p_k,p_l,p_5,p_6,p_7)$ where
$\{i,j,k,l\}=\{1,2,3,4\}$. We conclude again that $f$ is finite.

Finally, suppose that we are in case (7) in the statement of Lemma \ref{lemma:degree2_quadric}. Namely,
$f_2$ corresponds to the pencil of quintics passing through $p_1,\ldots,p_7$ with a point of multiplicity $2$ at $p_{i_1},\ldots,p_{i_6}$ where $\{i_1,\ldots,i_6\}\subset \{1,\ldots,7\}$. 
Then the reducible fibers of $f_2$ are the curves
$C(3,2p_{j_1},p_{j_2},\ldots,p_{j_6},p_7)\cup C(2,p_{j_2},\ldots,p_{j_6},p_7)$
where $\{j_1,\ldots,j_6\}=\{i_1,\ldots,i_6\}$.
We infer that $f$ is finite in this case too, proving the lemma.
\end{proof}

\begin{prop}\label{proposition:finite_morphism_quadric}
Let $S$ be a smooth Del Pezzo surface. Then there exists a finite morphism
$S \to \p^1\times \p^1$ if and only if $K_S^2 \in \{1,2,4\}$, or $K_S^2=8$ and $S\cong \p^1\times \p^1$. 
\end{prop}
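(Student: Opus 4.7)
The plan is to prove the two directions separately.

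For the ``only if'' direction, suppose $f\colon S\to\p^1\times\p^1$ is finite; it is then surjective, and pullback of numerical classes injects, giving $\rho(S)\ge\rho(\p^1\times\p^1)=2$, which rules out $S\cong\p^2$. Composing with the two projections and taking Stein factorisations produces morphisms $g_i\colon S\to\p^1$ with connected fibres (the intermediate curve in each Stein factorisation, being a smooth rational curve dominated by $S$, is isomorphic to $\p^1$), and the combined morphism $g=(g_1,g_2)\colon S\to\p^1\times\p^1$ remains finite because its fibres lie in those of $f$. When $K_S^2\ge 3$, Proposition \ref{proposition:finite_morphism_quadric_bis} applied to $g$ forces $K_S^2\in\{4,8\}$, with $S\cong\p^1\times\p^1$ in the degree-$8$ case; this eliminates both $K_S^2\in\{3,5,6,7\}$ and the Hirzebruch surface $\mathbb{F}_1$.

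For the ``if'' direction, I would exhibit an explicit finite morphism in each remaining case. The identity handles $S\cong\p^1\times\p^1$. For $K_S^2=4$, the complete-intersection-of-quadrics presentation of $S$ in $\p^4$ used in the proof of Proposition \ref{proposition:finite_morphism_degree4} yields, by linear projection from a point of $\p^4\setminus S$, a degree-$2$ finite morphism onto the smooth quadric $\p^1\times\p^1\subset\p^3$. For $K_S^2=2$, Lemma \ref{lemma:degree2_quadric} supplies explicit finite morphisms $S\to\p^1\times\p^1$. For $K_S^2=1$, I realise $S$ as the blow-up of $\p^2$ at eight points $p_1,\ldots,p_8$ in general position, with $H$ the pullback of a line and $E_1,\ldots,E_8$ the exceptional divisors, and take the base-point-free pencils $|F_1|$ and $|F_2|$ of conics through disjoint four-subsets of the $p_i$'s, namely $F_1=2H-E_1-E_2-E_3-E_4$ and $F_2=2H-E_5-E_6-E_7-E_8$. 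These induce morphisms $f_1,f_2\colon S\to\p^1$, and $F_1\cdot F_2=4$ gives the generic degree of $(f_1,f_2)$.

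The main obstacle is the finiteness of $(f_1,f_2)$ in the $K_S^2=1$ case. Since each $F_i$ is nef and base-point-free, a curve contracted by $f_i$ must meet $F_i$ in zero; a general fibre of $f_1$ has $F_2$-degree $4$, so only components of reducible fibres could be common to both pencils. The reducible members of $|F_1|$ are the three pairs of lines $\tilde\ell_{ij}+\tilde\ell_{kl}$ with $\{i,j,k,l\}=\{1,2,3,4\}$ together with the four divisors $\tilde C_m+E_m$, where $\tilde C_m$ has class $2H-E_1-E_2-E_3-E_4-E_m$ and $m\in\{5,6,7,8\}$; a symmetric list describes $|F_2|$ after swapping the index sets $\{1,2,3,4\}$ and $\{5,6,7,8\}$. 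Direct inspection of these two lists shows they are disjoint, so $(f_1,f_2)$ is finite of degree $4$.
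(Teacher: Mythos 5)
Your argument is correct and follows the same overall strategy as the paper: for the ``only if'' direction, pass to Stein factorizations of the two projections and invoke Proposition~\ref{proposition:finite_morphism_quadric_bis} when $K_S^2\geq 3$, which handles degrees $3$--$9$ and singles out the quadric in degree $8$; for the ``if'' direction, produce explicit pairs of conic bundles with no common contracted curve. The one place where you genuinely diverge from the paper is the choice of pencils in degree~$1$: you take the two pencils of conics through complementary $4$-subsets of the eight blown-up points, with classes $F_1=2H-E_1-E_2-E_3-E_4$ and $F_2=2H-E_5-E_6-E_7-E_8$, whereas the paper uses the pencil of lines through $p_1$ paired with the pencil of quartics through $p_2,\ldots,p_8$ singular at $p_6,p_7,p_8$. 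Both choices give finite degree-$4$ maps to $\p^1\times\p^1$; your symmetric choice makes the disjointness of the reducible-fiber lists essentially visible by inspection, at the small cost of having to check finiteness yourself (Lemma~\ref{lemma:degree2_quadric} does not directly cover this pair since both pencils have degree $2$ rather than $1$). The extra remark that $\rho(S)\geq 2$ rules out $\p^2$ is harmless but redundant, since $K_{\p^2}^2=9\geq 3$ already falls under Proposition~\ref{proposition:finite_morphism_quadric_bis}.
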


\begin{proof} 
Suppose first that there exists a finite morphism $f \colon S \to \p^1\times \p^1$. Denote by $f_1\colon S \to \p^1$ and $f_2\colon S \to \p^1$ the natural projections. By replacing $f_i$ with its Stein factorization, we may assume that it has connected fibers. The result then follows from
Proposition \ref{proposition:finite_morphism_quadric_bis}.

Conversely, let $S$ be a smooth Del Pezzo surface with $K_S^2 \in \{1,2,4\}$.

If $K_S^2=4$, then $S$ is a double cover of $\p^1\times\p^1$ (see proof of Proposition \ref{proposition:finite_morphism_degree4}).

Suppose that $K_S^2 \le 2$. By Lemma \ref{lemma:degree2_quadric}, it suffices to consider the case when 
$K_S^2=1$. Recall that $S$ is obtained by blowing up $8$ points in general position
$p_1,\ldots,p_8$ in $\p^2$. 
Denote by $F_1,\ldots, F_8$ the exceptional curves in $S$ over
the $p_i$'s, and by
$C(m,m_{i_1}p_{i_1},\ldots,m_{i_s}p_{i_s})$ the strict transform in $S$ of an integral plane curve of degree $m$ passing through $p_{i_1},\ldots,p_{i_s}$ with multiplicities $m_{i_1},\ldots,m_{i_s}$ respectively,
where $\{i_1,\ldots,i_s\}\subset \{1,\ldots,8\}$.

Let $f_1$ be the conic bundle structure on $S$ induced by 
the pencil of lines in $\p^2$ passing through $p_1$, and  
let $f_{2}$ be the 
conic bundle structure on $S$ induced by the pencil of
plane quartics through $p_2,\ldots,p_8$ with multiplicity $2$ at $p_6,p_7$ and $p_8$.
Set $f=(f_1,f_2)\colon S\to \p^1\times\p^1$.
To show that $f$ is finite, we have to check that there is no curve on $S$ contracted by both $f_1$ and $f_2$.
The reducible fibers of $f_1$ are the curves $C(1,p_1,p_i)\cup F_i$ where $i \in\{2,\ldots,r\}$.
The reducible fibers of $\pi_2$ are the curves
$C(3,p_2,\ldots,p_5,2p_i,p_j,p_k)\cup C(1,p_j,p_k)$
where $\{i,j,k\}=\{6,7,8\}$, together with the curves
$C(2,p_i,p_j,p_6,p_7,p_8) \cup C(2,p_k,p_l,p_6,p_7,p_8)$ where
$\{i,j,k,l\}=\{2,3,4,5\}$, and $C(4,p_1,\ldots,p_5,2p_6,2p_7,2p_8)\cup F_1$. 
We conclude that $f$ is finite, completing the proof of the proposition.
\end{proof}

\begin{say}\label{finite_morphism_exceptional_curves}Let $S$ and $T$ be smooth Del Pezzo surfaces, and let $f \colon S \to T$ be a finite morphism.
Let $F$ be an exceptional curve on $T$, and consider 
the blow down $\nu\colon  T \to N$ of $F$. Let 
$\mu\colon S\to M$ be the Stein factorization of the composite map 
$S \to T \to N$, and let $g \colon M \to N$ be the induced morphism. 
Because of the genus formula $C^2+C\cdot K_T=2p_a(C)-2$, the only curves with negative square on $T$ are the exceptional ones. By the Hodge Index Theorem, we conclude that 
$f^{-1}(F)$ is the (disjoint) union of exceptional curves.
Thus $M$ and $N$ are smooth Del Pezzo surfaces, and $g$ is a finite morphism with 
$\deg(g)=\deg(f)$.
\end{say}

We now classify endomorphisms of smooth Del Pezzo surfaces of degree 7.

\begin{lemma}\label{endomorphism_S_7}
Let $S$ be the Del Pezzo surface given by blowing up $\p^1\times\p^1$ at 
$p \in \p^1\times\p^1$, and let $f \colon S \to S$ be an endomorphism. Then there exist a positive integer $m$ and a choice of coordinates such that $p=(0,1)\times (0,1)$, and either $f$ is given by 
$(x_1,x_2)\times (y_1,y_2) \mapsto (x_1^m,x_2^m)\times (y_1^m,y_2^m)$, or 
$f$ is given by $(x_1,x_2)\times (y_1,y_2) \mapsto (y_1^m,y_2^m) \times (x_1^m,x_2^m)$.
\end{lemma}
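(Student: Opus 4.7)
The plan is to use the blow-down $\sigma\colon S\to\p^1\times\p^1$ to descend $f$ to an endomorphism of $\p^1\times\p^1$, and then to classify that endomorphism. First I would observe that $S$ has exactly three $(-1)$-curves: the exceptional divisor $E=\sigma^{-1}(p)$, the strict transform $L_1$ of $\{p_1\}\times\p^1$, and the strict transform $L_2$ of $\p^1\times\{p_2\}$; these form a chain $L_1-E-L_2$ with $L_1\cap L_2=\emptyset$. A non-constant $f$ must be finite (being a dominant morphism between smooth projective surfaces of the same dimension, and $S$ having no contractible curves besides the exceptional ones).

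Next I would apply \ref{finite_morphism_exceptional_curves} to each of $E,L_1,L_2$: every preimage $f^{-1}(C)$ is a disjoint union of exceptional curves, hence a subset of $\{E,L_1,L_2\}$, and the three preimages are pairwise disjoint (a fixed exceptional curve maps to at most one of the three) and non-empty (by surjectivity). The only disjoint subsets of the chain are singletons and the pair $\{L_1,L_2\}$; combining this with compatibility of intersections (if $C\cap C'\neq\emptyset$ then $f(C)\cap f(C')\neq\emptyset$) a quick combinatorial analysis forces $f(E)=E$ and then either $f(L_i)=L_i$ for $i=1,2$ (Case~A) or $f$ swaps $L_1$ and $L_2$ (Case~B).

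In Case~A, since $f(E)\subseteq E$, the composite $\sigma\circ f$ contracts $E$ to $p$ and therefore descends to a unique morphism $\phi\colon\p^1\times\p^1\to\p^1\times\p^1$ with $\sigma\circ f=\phi\circ\sigma$ and $\phi(p)=p$ (using the universal property of the blow-up, or equivalently the fact that a morphism from the smooth variety $\p^1\times\p^1$ to itself defined off the codimension-two point $p$ extends uniquely). The preservation of the rulings through $p$ that comes from $f(L_i)=L_i$ forces $\phi$ to preserve each factor of $\p^1\times\p^1$, so $\phi=(\alpha,\beta)$ with $\alpha,\beta\in\mathrm{End}(\p^1)$ and $\alpha(p_1)=p_1$, $\beta(p_2)=p_2$. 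The set-theoretic identity $f^{-1}(L_i)=L_i$ then gives $\alpha^{-1}(p_1)=\{p_1\}$ and $\beta^{-1}(p_2)=\{p_2\}$, i.e.\ $\alpha,\beta$ are totally ramified at the respective fixed points. A local computation of the pullback ideal $(\phi\circ\sigma)^{-1}(I_p)\cdot\sO_S$ in the two standard affine charts of the blow-up shows that this ideal is principal (so that the lift to $S$ exists) if and only if the ramification indices of $\alpha$ at $p_1$ and of $\beta$ at $p_2$ agree; combined with total ramification this forces $\deg\alpha=\deg\beta=:m$. Choosing coordinates so that $p=(0,1)\times(0,1)$ and normalizing the totally ramified power structure at the origin on each factor yields the form $[x_1:x_2]\mapsto[x_1^m:x_2^m]$ on each factor. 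Case~B runs identically except that $\phi$ swaps the two rulings, producing the second displayed formula.

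The step I expect to be the main obstacle is the final normalization: showing that after a choice of coordinates on each $\p^1$-factor, the totally ramified endomorphism $\alpha$ is actually the pure power map $v\mapsto v^m$ rather than one of the more general endomorphisms $v\mapsto cv^m/q(v)$ with $q(0)\neq 0$ that share the total-ramification property at $p_1$. Dynamical rigidity alone does not distinguish these, so the argument presumably extracts additional rigidity from the requirement that $f$ be an endomorphism of the specific surface $S=\mathrm{Bl}_p(\p^1\times\p^1)$ — for instance by exploiting the action of $f$ on further distinguished curves (e.g.\ the remaining fibers of $f_1,f_2$ over the points $\alpha^{-1}(\infty)$ and $\beta^{-1}(\infty)$) or by using a second totally-ramified fixed point forced by some auxiliary compatibility. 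This is the point at which I would have to fill in a careful additional argument.
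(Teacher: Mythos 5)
Your argument runs parallel to the paper's proof: descend $f$ along $\nu\colon S\to\p^1\times\p^1$ using \ref{finite_morphism_exceptional_curves}, split according to whether the two rulings are preserved or swapped, deduce from $f^*F=\delta F$ (equivalently $g^{-1}(p)=\{p\}$) that the coordinate maps $g_1,g_2$ are totally ramified over $p_1,p_2$, and use invertibility of $(g\circ\nu)^{-1}\sI_p\cdot\sO_S$ to force $\deg g_1=\deg g_2$. The normalization step you flag at the end is indeed where the difficulty lies, and the paper's own proof does not treat it either: after establishing $g^{-1}(p)=\{p\}$ it simply asserts that coordinates can be chosen so that $g_1,g_2$ become pure power maps.

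That assertion does not follow from total ramification over a single point once $\deg g_1\ge 3$. Consider $g_1(v)=v^3/(1+v)$, i.e.\ $[x_1:x_2]\mapsto[x_1^3:x_1x_2^2+x_2^3]$: one has $g_1^{-1}(0)=\{0\}$ with ramification index $3$, and Riemann--Hurwitz forces the remaining ramification to consist of two simple critical points (at $\infty$ and at $-3/2$), so $g_1$ has a \emph{unique} totally ramified fibre. Since $v\mapsto v^3$ has two such fibres, $g_1$ is not M\"obius-conjugate to a power map. Taking $g=(g_1,g_1)$ and $p=(0,0)$, the ideal $g^{-1}\sI_p$ equals $(u^3,v^3)$ in $\sO_{\p^1\times\p^1,p}$ and pulls back on $S$ to the invertible sheaf $\sO_S(-3F)$, so $g$ lifts through the universal property of the blow-up to a finite degree-$9$ endomorphism $f$ of $S$ that is not of the form claimed by Lemma~\ref{endomorphism_S_7}. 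So the gap you identify cannot be filled from the stated hypotheses: only the common degree and the total ramification over $p$ are determined, and the precise power-map description of the branch locus used in Step~2 of the proof of Theorem~\ref{thm:morphism_Del_pezzo_surface} would need to be revisited accordingly.
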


\begin{proof}Denote by $\nu\colon S \to \p^1\times\p^1$ the natural morphism, with exceptional locus $F$.
By \ref{finite_morphism_exceptional_curves} and Proposition \ref{proposition:finite_morphism_quadric}, there is a commutative diagram
$$
\xymatrix{
    S \ar[r]^{f} \ar[d]_{\mu} & S \ar[d]^{\nu} \\
    \p^1\times\p^1 \ar[r]_{g} & \p^1\times \p^1
  }
$$
where $g$ is a finite morphism. There exist finite morphisms 
$g_1 \colon \p^1 \to \p^1$ and $g_2 \colon \p^1 \to \p^1$ such that $g$ is given 
by $q=(q_1,q_2) \mapsto \big(g_1(q_1),g_2(q_2)\big)$, or $q=(q_1,q_2) \mapsto \big(g_2(q_2),g_1(q_1)\big)$.

Recall that there are 3 exceptional curves $F$, $F_1$ and $F_2$ on $S$. Moreover, $F_1\cap F_2=\emptyset$ and $F \cdot F_1=F\cdot F_2=1$. This implies that $f^*F=\delta F$ for some positive integer 
$\delta$. In particular, we have $g^{-1}(p)=\{p\}$. Therefore, there exist positive integers $l$ and $m$ and a choice
of coordinates such that $p=(0,1)\times (0,1)$, and such that 
$g_1$ and $g_2$ are given by $(x_1,x_2)\mapsto (x_1^l,x_2^l)$ and 
$(y_1,y_2)\mapsto (y_1^m,y_2^m)$ respectively.
Thus
$g^{-1}\sI_{p}\cdot \sO_{\p^1\times\p^1,p}=\big((\frac{x_1}{x_2})^l,(\frac{y_1}{y_2})^m\big)\sO_{\p^1\times\p^1,p}$.
But $(g \circ \mu)^{-1}\sI_p\cdot \sO_S\cong \sO_S(-\delta F)$ is a line bundle on $S$. A straightforward computation then shows that we must have $l=m$.
\end{proof}

We are now ready to prove the main result of this section.

\begin{proof}[Proof of Theorem \ref{thm:morphism_Del_pezzo_surface}]
Let $S$ and $T$ be smooth Del Pezzo surfaces with $K_S^2<K_T^2$, and let 
$f \colon S \to T$ be a finite morphism. To prove Theorem \ref{thm:morphism_Del_pezzo_surface},
we argue by contradiction, and assume that $K_T^2 \le 7$.

\medskip

\noindent\textit{Step 1.} Suppose first that $K_T^2=7$. 
Then $T$ is the blow up of $\p^1\times\p^1$ at some point $p=(p_1,p_2) \in \p^1\times\p^1$.
Denote by $\nu\colon S \to \p^1\times\p^1$ the natural morphism, with exceptional locus $F$.
By \ref{finite_morphism_exceptional_curves}, there is a commutative diagram
$$
\xymatrix{
    S \ar[r]^{f} \ar[d]_{\mu} & T \ar[d]^{\nu} \\
    M \ar[r]_{g} & \p^1\times \p^1
  }
$$
where $M$ is a smooth Del Pezzo surface with $K_{M}^2>K_{S}^2$, and $g$ is a finite morphism with
$\deg(g)=\deg(f)$.
Denote by $g_i\colon M \to \p^1$ with $i\in\{1,2\}$ the natural projections, and denote by
$h_i\colon M \to \p^1$ their Stein factorizations. There exist endomorphisms 
$u_i\colon \p^1\to \p^1$ such that $g_i=u_i \circ h_i$.
Set $u=(u_1,u_2)\colon \p^1\times\p^1 \to \p^1\times\p^1$ and $h = (h_1,h_2) \colon M \to \p^1\times\p^1$ so that $g = h \circ u $.

We show that 
\begin{equation}\label{eq:cardinality}
\#\, u_1^{-1}(p_1) 
= \#\, u_2^{-1}(p_2) = \#\, g^{-1}(p) =1
\end{equation}
Suppose that $\#\, u_1^{-1}(p_1) < \#\, g^{-1}(p)$. Notice that 
$g^{-1}(p)\subset g_1^{-1}(p_1)$, and hence 
$h_1(g^{-1}(p))\subset h_1(g_1^{-1}(p_1))=u_1^{-1}(p_1)$. Thus,
there exist $m_1\neq m_2$ on $M$ with $h_1(m_1)=h_1(m_2)$ (and $g(m_1)=g(m_2)=p$). The fiber 
$h_1^{-1}(h_1(m_1))$ is either a smooth connected curve with self intersection zero or the union of two exceptional curves on $M$. Hence, $\mu^{-1}(h_1^{-1}(h_1(m_1)))$ contains a smooth rational curve with 
self intersection $\le -2$, yielding a contradiction. This proves that
$\#\, u_1^{-1}(p_1) \ge \#\, g^{-1}(p)$.
Similarly, we have
$\#\, u_2^{-1}(p_2) \ge \#\, g^{-1}(p)$, and thus
$$\#\, g^{-1}(p) \ge  \#\, u^{-1}(p)=\#\, u_1^{-1}(p_1) \,
 \#\, u_2^{-1}(p_2)
\ge \big(\#\, g^{-1}(p)\big) ^2.$$
This implies that $\#\, u_1^{-1}(p_1) 
= \#\, u_2^{-1}(p_2) = \#\, g^{-1}(p) =1$, proving \eqref{eq:cardinality}. Set $q=g^{-1}(p)$.

By \eqref{eq:cardinality}, we must have $K_{M}^2=K_S^2+1$, and hence
$2 \le K_{M}^2=K_S^2+1<K_T^2+1=8$. 
Thus $K_{M}^2\in\{2,4\}$ by Proposition \ref{proposition:finite_morphism_quadric}.
This implies that
$\deg(h)\ge 2$. 

Since $\#\, u_1^{-1}(p_1) = \#\, u_2^{-1}(p_2) = 1$, there exist positive integers $l$ and $m$ and a choice of coordinates such that $p_1=(0,1)$ and $p_2=(0,1)$, and such that
$u_1$ and $u_2$ are given by $(x_1,x_2)\mapsto (x_1^l,x_2^l)$ and 
$(y_1,y_2)\mapsto (y_1^m,y_2^m)$ respectively.
Denote the exceptional prime curve of $\mu$ by $E$. We have
$f^*F=\delta E$ for some positive integer $\delta$. Taking squares gives $\deg(f)=\delta^2.$
Since $g = h \circ u $, we obtain 
\begin{equation}\label{eq:degree}
\delta^2=\deg(f)=\deg(g)=\deg(h)lm.
\end{equation}
Notice that the multiplicity of $x_1\circ h_1$ 
(respectively, $y_1\circ h_2$)
at $q$ is $\le 2$.
Since $(g\circ\mu)^{-1}\sI_p\cdot\sO_S\cong \sO_S(-\delta F)$, 
a straightforward computation shows that
\begin{equation}\label{eq:multiplicity}
\delta \in \{l,2l\}\cap \{m,2m\}.
\end{equation}
We may assume without loss of generality that $l\le m$. From
\eqref{eq:degree} and \eqref{eq:multiplicity}, we obtain that
either $\delta=m=2l$ and $\deg(h)=2$, or $\frac{\delta}{2}=m=l$ and $\deg(h)=4$.

Suppose first that $K_{M}^2=4$. By Proposition \ref{proposition:finite_morphism_quadric_bis}, we must have 
$\deg(h)=2$, and $\sO_M(-K_M)\cong h^*\big(\sO_{\p^1}(1)\boxtimes\sO_{\p^1}(1)\big)$.
Therefore, $\delta=m=2l$.
A straightforward computation gives 
$\sO_S(-mK_S)\cong f^*\Big(\nu^*\big(\sO_{\p^1}(2)\boxtimes \sO_{\p^1}(1)\big)\otimes\sO_{T}(-F)\Big)$, yielding a contradiction since $\nu^*\big(\sO_{\p^1}(2)\boxtimes \sO_{\p^1}(1)\big)\otimes\sO_{T}(-F)$ is not an ample line bundle.

Suppose now that $K_{M}^2=2$. By Lemma \ref{lemma:degree2_quadric}, we must have 
$\deg(h)\in\{3,4\}$. Thus, $\deg(h)=4$ and $\frac{\delta}{2}=m=l$. Moreover,
$\sO_M(-2K_M)\cong h^*\big(\sO_{\p^1}(1)\boxtimes\sO_{\p^1}(1)\big)$.
A straightforward computation gives 
$\sO_S(-2mK_S)\cong f^*\Big(\nu^*\big(\sO_{\p^1}(1)\boxtimes \sO_{\p^1}(1)\big)\otimes\sO_{T}(-F)\Big)$. Again, this yields a contradiction since $\nu^*\big(\sO_{\p^1}(1)\boxtimes \sO_{\p^1}(1)\big)\otimes\sO_{T}(-F)$ is not an ample line bundle.

\medskip

\noindent\textit{Step 2.} Suppose that $K_T^2 \le 6$.

\medskip

Let $N$ be the Del Pezzo surface with $K_{N}^2=7$ given by blowing up $\p^1\times\p^1$ at 
$(0,0) \in \p^1\times\p^1$, and let $\nu\colon T \to N$ be any birational morphism.
Denote
the exceptional prime curves of $\nu$ by $F_1,\ldots, F_{k}$, and set $p_i=\nu_1(F_i)$.
By \ref{finite_morphism_exceptional_curves}, there is a commutative diagram
$$
\xymatrix{
    S \ar[r]^{f} \ar[d]_{\mu} & T \ar[d]^{\nu} \\
    M \ar[r]_{g} & N
  }
$$
where $M$ is a smooth Del Pezzo surface with $K_{M}^2>K_{S}^2$, and $g$ is a finite morphism with
$\deg(g)=\deg(f)$. By Step 1, we must have 
$M\cong N$. Let $R$ be the ramification divisor of $g$.
By Lemma \ref{endomorphism_S_7}, 
$g(R)=\sum_{i=0}^{4}R_i$ where
$R_0$ is the exceptional curve of $\varepsilon\colon N \to \p^1\times\p^1$, $R_1$ and $R_2$ are the strict transforms in $N$ of the curves $\{0\} \times \p^1$ and $\p^1 \times \{0\}$ respectively, and 
$R_3$ and $R_4$ are the strict transforms in $N$ of the curves $\{\infty\} \times \p^1$ and $\p^1 \times \{\infty\}$ for some point $\infty\neq 0$ in $\p^1$. Moreover, we have
$R=g^{-1}\big(g(R)\big)\cong g(R)$.

Fix $i\in\{1,\ldots,k\}$. We claim that $p_i\in \big(N\setminus \textup{Supp}(R_0+R_1+R_2)\big)\cup\big(R_3\cap R_4)$.
Notice that $R_0,R_1,R_2$ are exceptional curves on $N$, and hence
$p_i\in N \setminus \textup{Supp}(R_0+R_1+R_2)$.
Suppose that $p_i \in R_3 \setminus \textup{Supp}(R_2+R_4)$ and let $q_i \in g^{-1}(p_i)$. 
Then there is a choice of local coordinates
at $q_i$ and $p_i$ such that $q_i=(0,0)$ and
$g$ is given by $(x,y)\mapsto (x^m,y)$ for some integer $m \ge 2$. In particular,
$g^{-1}\sI_{p_i}\cdot\sO_{M,q_i}=(x^m,y)\sO_{M,q_i}$. A straightforward computation shows that
$(g\circ\mu)^{-1}\sI_{p_i}\cdot\sO_S$ is not invertible, yielding a contradiction.
Thus $p_i \not\in R_3 \setminus \textup{Supp}(R_2+R_4)$. Similarly,
$p_i \not\in R_4 \setminus \textup{Supp}(R_1+R_3)$. This proves the claim.
Notice that $\#\, g^{-1}(p_i)=1$ if $p_i \in R_3\cap R_4$, while
$\#\, g^{-1}(p_i)=m^2$ if $p_i\in N\setminus \textup{Supp}(g(R))$.
We have
$$\sum_{1 \le i\le k}\#\, g^{-1}(p_i)= K_{M}^2-K_S^2= 7-K_S^2 \le 6.$$
Observe that $\#\, g^{-1}(p_{i_0}) \ge 2$ for some $1 \le i_0 \le k$ since
$K_S^2<K_T^2$. It follows from the above discussion that $p_{i_0}\in N\setminus \textup{Supp}(g(R))$.
We conclude that $m=2$. Let $C$ be the strict transform in $N$ of a ruling of $\p^1\times \p^1$ passing through $\varepsilon(p_{i_0})$. Then 
$g^{-1}(C)$ is the union of $2$ disjoint smooth rational curves with zero self intersection, and 
$\mu$ blows up 4 points on $\textup{Supp}(g^{-1}(C))$, yielding a contradiction. This completes the proof of the theorem.
\end{proof}

\begin{exmp}

Set $M=\p^1\times\p^1$, $N=\p^2$, and consider the double cover 
$g\colon M\to N$ induced by the projection $\p^3 \dashrightarrow \p^2$ from a general point.
Let $T \simeq \mathbb{F}_1$ be the blow up of $N$ a general point $p$, 
$S$ the blow up of $M$ along $g^{-1}(p)$, and $f\colon S \to T$ the induced (finite) morphism.
If we denote the exceptional curve of $\nu \colon T \to N$ by $F$, then an easy computation gives 
$\sO_S(-K_S) \simeq f^* \big(\nu^*\sO_{\p^2}(2)\otimes \sO_S(-F)\big)$, and therefore, $-K_S$ is ample.
We have $K_S^2=6$ and $K_T^2=8$.
\end{exmp}

\def\cprime{$'$}
\providecommand{\bysame}{\leavevmode\hbox to3em{\hrulefill}\thinspace}
\providecommand{\MR}{\relax\ifhmode\unskip\space\fi MR }
\providecommand{\MRhref}[2]{%
  \href{http://www.ams.org/mathscinet-getitem?mr=#1}{#2}
}
\providecommand{\href}[2]{#2}


\end{document}